%
%
%


\documentclass[final,leqno]{amsart}

\usepackage{graphicx}
\usepackage{amssymb}
\usepackage{amsmath}

\usepackage{hyperref}
\usepackage{mathrsfs}
\usepackage{float}
\usepackage{mathtools}

\usepackage[usenames]{xcolor}


\newtheorem{teo}{Theorem}[section]

\theoremstyle{definition}

\newtheorem{cor}[teo]{Corollary}
\newtheorem{prop}[teo]{Proposition}

\newtheorem{assunp}[teo]{Assumption}
\newtheorem{hypo}{Hypothesis}
\newtheorem{problem}{\bfseries Problem\rmfamily}
\theoremstyle{remark}

\numberwithin{equation}{section}

\newcommand{\mV}{\mathcal{V}}
\newcommand{\mQ}{\mathcal{Q}}
\newcommand{\mK}{\mathcal{K}}
\newcommand{\mH}{\mathcal{H}}

\newcommand{\mF}{\mathcal{F}}
\newcommand{\mG}{\mathcal{G}}
\newcommand{\eu}{\texttt{e}_u}
\newcommand{\ep}{\texttt{e}_p}
\newcommand{\ew}{\texttt{e}_w}
\newcommand{\emm}{\texttt{e}_m}

\DeclareMathOperator{\Ima}{Im}
\newcommand\mA{\mathbb{A}}

\newcommand\mB{\mathbb{B}}
\newcommand\mC{\mathbb{C}}
\newcommand\mL{\mathbb{L}}

\usepackage{array}   
\newcolumntype{C}{>{$}c<{$}} 
\usepackage{threeparttable}
\newcolumntype{A}{>{\centering}p{0.1\textwidth}}
\newcolumntype{B}{>{\centering}p{1cm}}

\begin{document}

\title[A mixed formulation with applications to linear viscoelasticity]{A mixed parameter formulation with applications to linear viscoelasticity}


\author{Erwin Hern\'andez}
\address{Departamento de Matem\'atica, Universidad T\'ecnica Federico Santa Mar\'ia, Casilla 110-V Valparaiso, Chile}
\curraddr{}
\email{erwin.hernandez@usm.cl}
\thanks{The first author has been partially supported by FONDECYT project No.1181098, Chile}

\author{Felipe Lepe}
\address{Departamento de Matem\'atica, Universidad del B\'io-B\'io, Casilla 5-C, Concepci\'on, Chile}
\curraddr{}
\email{flepe@ubiobio.cl}
\thanks{The second author has been partially supported by FONDECYT project No. 11200529, Chile}

\author{Jesus Vellojin}
\address{Departamento de Matem\'atica, Universidad T\'ecnica Federico Santa Mar\'ia, Casilla 110-V Valparaiso, Chile}
\curraddr{}
\email{jesus.vellojinm@usm.cl}
\thanks{The third author has been partially supported by FONDECYT  project No.1181098, Chile}

\subjclass[2020]{Primary 65M12 65M60 35Q74 45D05 Secondary 65R20 65N12 74D05}

\date{}

\dedicatory{}

\keywords{Viscoelasticity, Volterra integrals, mixed methods,  locking-free, error estimates}

\begin{abstract}
In this work we propose and analyze an abstract parameter dependent model written as a mixed variational formulation based on Volterra integrals of second kind.  For the analysis, we consider a suitable adaptation to the classic mixed theory in the Volterra equations setting, and prove the well posedness of the resulting mixed viscoelastic formulation. Error estimates are derived, using the available results for Volterra equations, where all the estimates are independent of the perturbation parameter. We consider an application of the developed theory in a viscoelastic Timoshenko beam,  and report numerical experiments in order to assess the independence of the perturbation parameter.
\end{abstract}

\maketitle
\section{Introduction}
\label{intro}

Viscoelasticity is a physical property present in a wide variety of structures and became important after the popularization of polymers \cite{meyers132mechanical}. The study of viscoelastic materials, their damping capabilities and behavior in time due to induced stress or temperature changes, is well established and we refer to the studies of Flugge \cite{flugge1975viscoelasticity}, Christensen \cite{christensen2012theory} and Reddy \cite{reddy2007introduction} for a rigorous theoretical development. 

There are several mathematical models and numerical methods to analyze viscoelastic problems. The most common numerical approach is through the finite difference method and the finite element method in order to predict, as accurately, the response of viscoelastic components which depend on their history loading and its service conditions \cite{argyris1991135,chen200051,janovsky199591,van2009properties}. As is stated in \cite{christensen2012theory,shaw1997comparison}, there are two main approaches that relate the strains and stress in viscoelasticity: a constitutive equation in hereditary integral form, or constitutive equations in differential operator form. Shaw et. al \cite{shaw1997comparison} shows that although the two approaches are equivalent at the continuous level, the numerical approximations arising from them will, in general, be different. 
	
	On the other hand, in engineering, slender structures are modeled by considering their thickness behavior (see, for instance, \cite{chapelle2010finite}). This parameter causes difficulties in the convergence of numerical methods, leading to the so-called locking phenomenon. Is this fact that motivates both communities, engineering and mathematician,  to design and analyze locking-free numerical methods in order to approximate correctly the solutions. This is also the case of viscoelastic slender structures.

	For example, an approach dealing with viscoelastic structures where the thickness plays an important role can be found in \cite{castineira2017justification, castineira2019justification}. Here, the authors make use of a Kelvin-Voigt constitutive equation \cite{shillor2004models} along with asymptotic analysis in order to obtain a two dimensional viscoelastic shell as a limit of a quasi-static three dimensional shell model, which includes a long-term memory that takes into account previous deformations. Furthermore, they provide convergence results which justify those equations. Other numerical approach, where a constitutive equation in hereditary integral form and the Laplace transform are used, can be found in \cite{martin2014quasi,martin2016modified,martin2017nonlinear,2017viscoelastic,ayyad2009frictionless}. Also, there are several finite element studies for viscoelastic thin structures where the hereditary integral is approximated using the trapezoidal rule \cite{hernandez2019,payette2010nonlinear,payette2013nonlinear}.

When parameter-dependent formulation appears, mixed finite element methods can be considered to solve it. The fundamental of mixed methods can be found in Boffi et. al \cite{boffi2013mixed}, It is well known that mixed finite element methods can be applied to a wide variety of applications including elastic slender structures (\cite{arnold1981discretization,hernandez2008approximation}). Nevertheless, in the case of integro-differential equations, as appear in viscoelastic problems, the studies working with mixed formulations is scarce.

The research of \cite{sinha2009mixed} analyze a semi-discrete mixed finite element methods for parabolic integro-differential equations that arise in the modeling of nonlocal reactive flows in porous media, deriving error estimates for the pressure and velocity for smooth and non-smooth data using a mixed Ritz-Volterra projection, introduced earlier by Ewing et. al \cite{ewing2002sharp}. Based in this work, Karaa and Pani \cite{karaa2015optimal} proposed a mixed finite element formulation for a class of second order hyperbolic integro-differential equations using a modification of the nonstandard energy formulation of Baker in order to obtain optimal error estimates under minimal smoothness assumptions in the initial data. 

For linear viscoelastic materials, Rognes and Winther \cite{rognes2010}, propose a mixed finite element method enforcing the symmetry of the stress tensor weakly. They obtain a priori error estimates which are tested in several numerical examples. We observe that, in this work,  the thickness of the structure is not considered as an element of analysis in the approximation. Instead, the authors are focused on giving a mixed formulation involving the stress tensor in addition to the displacement to address the convergence issues when working with incompressible or nearly incompressible materials.

To the best of the authors knowledge,  the works reported here, cannot be generalized to cover a wide variety of cases.  In this paper, we propose an abstract setting to analyze numerical and mathematically mixed methods with an integro-differential term. We consider an abstract framework which involves a mixed formulation along with memory terms as hereditary integrals, where the kernel is considered to be bounded. This type of kernel is found, for instance, in viscoelastic material of bounded creep. We adapt the general framework presented in \cite{boffi2013mixed}, by including the integral term, and using the classical Gronwall inequality. We obtain estimates with explicit constants that do not depend on the perturbation parameters and, then,  can be used in different problems where this is an important fact; we show the use of this framework in the viscoelastic Timoshenko beam formulation.

The paper is organized as follows: In Section \ref{section2}  the abstract setting with a parameter perturbation is proposed. This can be considered as an extension to viscoelasticity of the well-known regular and penalty type cases in elasticity. For the later, we provide stability bounds that do not deteriorate when considering the a vanishing penalty parameter. This approach is accompanied with a semi-discretization analysis of the continuous problem that takes into account several common discrete spatial assumptions such as conforming methods or semi-discrete inf-sup conditions, and then we use the stability results to obtain semi-discrete error bounds that can be used to obtain locking-free finite element formulations on bounded creep linear viscoelastic beams. In Section \ref{section:TIMO} the abstract analysis is applied to a Timoshenko beam model in order to obtain an equivalent well-posed mixed formulation, which is semi-discretized and the theoretical convergence are obtained. We end this section with numerical tests in order to assess the performance fo the mixed method for the viscoelastic Timoshenko beam. Finally in Section \ref{section:conclusions} we present some conclusions of our work.

\section{The abstract setting}
\label{section2}

Let $\mV, \mQ$ be two real Hilbert spaces endowed with norms $\Vert \cdot\Vert_{\mV}$ and $\Vert \cdot\Vert_{\mQ}$, respectively. We denote by $\mathcal{L}(\mV ;\mQ)$ the space of continuous linear mappings from $\mV$ to $\mQ$. Also, we denote by $\mV'$ and $\mQ'$ their corresponding dual spaces endowed with norms $\Vert\cdot\Vert_{\mV'}$ and $\Vert \cdot\Vert_{\mQ'}$; $\langle \cdot,\cdot\rangle_{\mV}$  and $\langle \cdot,\cdot\rangle_{\mQ}$ the duality pairing between the spaces $\mV$ and $\mV'$ and $\mQ$ and $\mQ'$; and,  $\mathcal{J}:=[0,T]$ the observation period, with $T\in[0,\infty)$. 

For every Banach space $\mathcal{B}$ and every time interval $[0,t]$, we denote by $L_{[0,t]}^\ell(\mathcal{B})$ the space of maps $ \mathfrak{w}:[0,t]\rightarrow \mathcal{B}$ with norm, for $1\leq \ell <\infty$,
$$
\Vert \mathfrak{w}\Vert_{L_{[0,t]}^\ell(\mathcal{B})}:=\left(\int_{0}^t\Vert \mathfrak{w}\Vert_{\mathcal{B}}^\ell\right)^{1/\ell},
$$
 with the usual modification for $\ell=\infty$. If $\mathcal{S}:=[0,t]$, we will simply write $L_{\mathcal{S}}^\ell(\mathcal{B})$.

The aim of the present section is to establish necessary and sufficient conditions to guarantee the stability of a system of Volterra's integral equations formulated in such a way that the spatial components are analyzed using results of mixed formulations. To accomplish this task,  first we will consider the following general assumptions  (see \cite{boffi2013mixed} for details).
\begin{assunp}
	\label{assumption-1}
	Let $\mV$ and $\mQ$ be two Hilbert spaces.  Let $a(\cdot,\cdot):\mV\times \mV\rightarrow \mathbb{R}$, $b(\cdot,\cdot):\mV\times \mQ\rightarrow\mathbb{R}$ and $c(\cdot,\cdot):\mQ\times \mQ\rightarrow\mathbb{R}$ be three given bilinear forms, and denote by $\mA:\mV\rightarrow\mV$, $\mB:\mV\rightarrow\mQ$ and $\mC:\mQ\rightarrow\mQ$ their corresponding induced linear operators. Assume that the following properties are satisfied:
\begin{itemize}
		\item [i.] The bilinear forms $a(\cdot,\cdot)$ and $c(\cdot,\cdot)$ are symmetric, positive semi-definite and continuous on $\mV$ and $\mQ$, respectively, i.e., 
		\begin{equation*}
			\begin{aligned}
				&\vert a(v,w)\vert \leq \Vert \mA \Vert_{\mathcal{L}(\mV;\mV')} \Vert v\Vert_\mV\Vert w\Vert_\mV \equiv\Vert a \Vert_{\mathcal{L(V\times\mV;\mathbb{R})}}\Vert v\Vert_\mV\Vert w\Vert_\mV, \\
				&\vert c(p,q)\vert \leq \Vert \mC \Vert_{\mathcal{L}(\mQ;\mQ')} \Vert p\Vert_\mQ\Vert q\Vert_\mQ \equiv\Vert c \Vert_{\mathcal{L(Q\times\mQ;\mathbb{R})}}\Vert p\Vert_\mQ\Vert q\Vert_\mQ,
			\end{aligned}
		\end{equation*}
		for all $w,v\in\mV$ and for all $p,q\in\mQ$. We will also require that $a(\cdot,\cdot)$ and $c(\cdot,\cdot)$ are strongly coercive in $\mathcal{K}$ and $\mathcal{H}$, respectively, i.e., there exist $\alpha_0,\gamma_0>0$ such that
		\begin{equation*}
			\begin{aligned}
				&a(v_0,v_0)\geq \alpha_0\Vert v_0\Vert_\mV^2 \hspace{0.4cm} \forall v_0\in \mK,\\
				&c(q_0,q_0)\geq \gamma_0\Vert q_0\Vert_\mQ^2 \hspace{0.4cm} \forall q_0\in \mH.
			\end{aligned}
		\end{equation*}
		The norms of the induced operators and bilinear forms will be denoted simply by $\Vert a \Vert$ and $\Vert c \Vert$. Also,  we have that
		\begin{equation*}
			\begin{aligned}
				&\langle \mA w,v\rangle_{\mV}=\langle w,\mA v\rangle_{\mV}=a(w,v)\hspace{0.4cm}\forall w,v\in\mV,\\
				&\langle \mC p,q\rangle_{\mQ}=\langle p,\mC q\rangle_{\mQ}=c(p,q)\hspace{0.4cm}\forall p,q\in\mQ.
			\end{aligned}
		\end{equation*}
		\item [ii.]  Similarly, the bilinear form $b(\cdot, \cdot)$ is continuous, i.e.,  
		\begin{equation*}
			\vert b(v,q)\vert\leq \Vert \mB\Vert_{\mathcal{L}(\mV;\mQ')} \Vert v\Vert_\mV\Vert q\Vert_\mQ= \Vert b\Vert_{\mathcal{L(V\times}\mQ;\mathbb{R})}\Vert v\Vert_\mV\Vert q\Vert_\mQ.
		\end{equation*}
		Moreover, the  operator $\mB$ satisfies
		\begin{equation*}
			\langle \mB v,q\rangle_{\mQ}=\langle v, \mB^*q\rangle_{\mV}=b(v,q) \hspace{0.4cm}\forall v\in \mV, \forall q\in\mQ.
		\end{equation*}
		\item [iii. ] We consider functions $f$ and $g$ that are continuous on $\mV$ and $\mQ$, respectively, i.e.,
		$$
		\langle f,v\rangle_{\mV}\leq \Vert f \Vert_{\mV'}\Vert v\Vert_\mV \hspace{0.2cm}\forall v\in \mV,
		$$		and 
		$$
		\langle g,q\rangle_{\mQ}\leq \Vert g \Vert_{\mQ'}\Vert q\Vert_\mQ\hspace{0.2cm}\forall q\in \mQ.
		$$
		a.e. in $\mathcal{J}$.
	\end{itemize}
\end{assunp}

We recall the following result based on the Banach closed range Theorem, related to the inf-sup condition and stability constants (See \cite[Chapter 4]{boffi2013mixed}).

\begin{teo}[Banach Closed Range Theorem]
	\label{teo-1}
	Let $\mV$ and $\mQ$ be two real Hilbert spaces and $\mB$ a linear continuous operator from $\mV$ to $\mQ'$. Set
	$$
	\mathcal{K}:=\ker \mB\subset \mV \hspace{0.4cm}\mbox{ and }\hspace{0.4cm}\mathcal{H}:=\ker \mB^*\subset \mQ.
	$$
	Then, the following statements are equivalent:
	\begin{enumerate}
		\item $\Ima \mB$ is closed in $\mQ'$,
		\item $\Ima \mB^*$ is closed in $\mV'$,
		\item There exists a lifting operator $\mL_{\mB}\in \mathcal{L}(\Ima \mB;\mK^{\perp})$ and there exists $\beta>0$ such that $\mB(\mL_{\mB}(g))=g$ and moreover $\beta\Vert\mL_{\mB}g\Vert_{\mV}\leq \Vert g\Vert_{\mQ'}$ for all $g\in\Ima\mB$,
		\item There exists a lifting operator $\mL_{\mB^*}\in \mathcal{L}(\Ima \mB^*;\mH^{\perp})$ and there exists $\beta>0$ such that $\mB^*(\mL_{\mB^*}(f))=f$ and moreover $\beta\Vert\mL_{\mB^*}f\Vert_{\mQ}\leq \Vert f\Vert_{\mV'}$ for all $f\in\Ima\mB^*$.
	\end{enumerate}
\end{teo}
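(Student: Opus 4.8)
The plan is to prove the chain of equivalences by first isolating the two purely analytic equivalences $(1)\Leftrightarrow(3)$ and $(2)\Leftrightarrow(4)$, each of which follows from the bounded inverse theorem, and then to close the loop through the genuine closed range statement $(1)\Leftrightarrow(2)$. Before starting I would record the orthogonal decompositions $\mV=\mK\oplus\mK^\perp$ and $\mQ=\mH\oplus\mH^\perp$ afforded by the Hilbert structure, together with the Riesz identifications of $\mQ'$ with $\mQ$ and $\mV'$ with $\mV$, so that $\overline{\Ima\mB}=\mH^\perp$ and $\overline{\Ima\mB^*}=\mK^\perp$. This translates ``$\Ima\mB$ closed'' into ``$\Ima\mB=\mH^\perp$'', and makes $\mK^\perp$ and $\mH^\perp$ the natural domain and codomain for the liftings.

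For $(1)\Rightarrow(3)$ I would restrict $\mB$ to $\mK^\perp$. Since $\ker\mB=\mK$, the map $\mB|_{\mK^\perp}:\mK^\perp\to\Ima\mB$ is a continuous linear bijection onto its image; if $\Ima\mB$ is closed in $\mQ'$ it is itself a Banach space, and the bounded inverse theorem yields a continuous inverse. Setting $\mL_{\mB}:=(\mB|_{\mK^\perp})^{-1}$ gives $\mB(\mL_{\mB}g)=g$ together with the estimate $\beta\Vert\mL_{\mB}g\Vert_{\mV}\le\Vert g\Vert_{\mQ'}$, where $\beta=\Vert\mL_{\mB}\Vert^{-1}$. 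Conversely, for $(3)\Rightarrow(1)$ I would take a sequence $g_n\in\Ima\mB$ with $g_n\to g$ in $\mQ'$; linearity of $\mL_{\mB}$ and the stability bound force $(\mL_{\mB}g_n)$ to be Cauchy in $\mV$, hence convergent to some $v$, and continuity of $\mB$ gives $\mB v=g$, so $g\in\Ima\mB$ and the image is closed. The pair $(2)\Leftrightarrow(4)$ is obtained by the identical argument applied to $\mB^*$, with $\mH^\perp$ in place of $\mK^\perp$.

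It remains to connect the two halves, and this is where I expect the real work to lie. The cleanest route is to establish $(1)\Leftrightarrow(2)$, the closed range statement proper, by exploiting that in the Hilbert setting the restriction $T:=\mB|_{\mK^\perp}:\mK^\perp\to\mH^\perp$ is injective with dense range (by $\overline{\Ima\mB}=\mH^\perp$) and that its adjoint is $T^*=\mB^*|_{\mH^\perp}:\mH^\perp\to\mK^\perp$, likewise injective with dense range. Since a bounded operator between Hilbert spaces has closed range if and only if its adjoint does, $\Ima\mB=\mH^\perp$ is closed precisely when $\Ima\mB^*=\mK^\perp$ is; and when $T$ is a bijection the norm identity $\Vert T^{-1}\Vert=\Vert(T^*)^{-1}\Vert$ shows that the very same $\beta$ may be used for both liftings. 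The main obstacle is exactly this symmetry between $\mB$ and $\mB^*$: it is the content of the Banach closed range theorem itself, and the Hilbert-space proof hinges on the adjoint identities together with the density relations $\overline{\Ima\mB}=\mH^\perp$ and $\overline{\Ima\mB^*}=\mK^\perp$. With $(1)\Leftrightarrow(2)$ secured, chaining it with $(1)\Leftrightarrow(3)$ and $(2)\Leftrightarrow(4)$ closes all four equivalences; alternatively one may simply cite the classical closed range theorem for $(1)\Leftrightarrow(2)$ and retain the open-mapping arguments as the only steps to be written out.
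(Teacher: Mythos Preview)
The paper does not give its own proof of this theorem. It is stated with the preface ``We recall the following result based on the Banach closed range Theorem, related to the inf-sup condition and stability constants (See \cite[Chapter 4]{boffi2013mixed})'' and is used thereafter as a black box; no proof block follows the statement.

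Your proposal is mathematically sound and follows the standard route: the equivalences $(1)\Leftrightarrow(3)$ and $(2)\Leftrightarrow(4)$ are exactly the bounded inverse theorem applied to the injective restrictions $\mB|_{\mK^\perp}$ and $\mB^*|_{\mH^\perp}$, and $(1)\Leftrightarrow(2)$ is the classical closed range theorem, which in the Hilbert setting reduces to the adjoint symmetry you describe. Nothing is missing and nothing would fail. Since the paper simply cites Boffi--Brezzi--Fortin for this result, there is no ``paper's own proof'' to compare against; your sketch is precisely the argument one finds in that reference.
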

If $\mB$ is surjective, then the Closed Range Theorem has a direct consequence.
\begin{cor}
	\label{cor-1}
	Let $\mV$ and $\mQ$ be two real Hilbert spaces and $\mB$ a linear continuous operator from $\mV$ to $\mQ'$.
	Then, the following statements are equivalent:
	\begin{enumerate}
		\item $\Ima \mB=\mQ'$.
		\item  $ \Ima \mB^*$ is closed and $\mB^*$ is injective.
		\item $\mB^*$ is bounding, i.e., there exists $\beta>0$ such that $\Vert \mB^*q\Vert_{\mV'}\geq \beta \Vert q\Vert_\mQ, \forall q\in \mQ$.
		\item There exists a lifting operator $\mL_{\mB}\in \mathcal{L}(\mQ';\mV)$ such that $\mB(\mL_{\mB}(g))=g$, for all $g\in \mQ'$, with $\Vert \mL_{\mB}\Vert\leq1/\beta$.
	\end{enumerate}
\end{cor}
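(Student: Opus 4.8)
The plan is to establish the cyclic chain of implications $(1)\Rightarrow(2)\Rightarrow(3)\Rightarrow(4)\Rightarrow(1)$, reducing everything to Theorem \ref{teo-1} by exploiting the simplification that surjectivity of $\mB$ forces $\mathcal{H}=\ker\mB^*=\{0\}$, hence $\mathcal{H}^{\perp}=\mQ$. The crucial identification used repeatedly is the annihilator relation $\ker\mB^*=(\Ima\mB)^{\perp}$, obtained via the Riesz identification of $\mQ$ with $\mQ'$; under this identification surjectivity of $\mB$ translates into triviality of $\mathcal{H}$, which is precisely what makes the (a priori local) lifting operators of Theorem \ref{teo-1} globally defined.

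For $(1)\Rightarrow(2)$, if $\Ima\mB=\mQ'$ then $\Ima\mB$ is trivially closed, so the equivalence $(1)\Leftrightarrow(2)$ of Theorem \ref{teo-1} gives that $\Ima\mB^*$ is closed as well; moreover $\ker\mB^*=(\Ima\mB)^{\perp}=\{0\}$, so $\mB^*$ is injective. For $(2)\Rightarrow(3)$, injectivity of $\mB^*$ means $\mathcal{H}=\{0\}$ and $\mathcal{H}^{\perp}=\mQ$, so the lifting $\mL_{\mB^*}$ furnished by Theorem \ref{teo-1}(4) is defined on all of $\Ima\mB^*$ and satisfies $\beta\Vert\mL_{\mB^*}f\Vert_{\mQ}\leq\Vert f\Vert_{\mV'}$. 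For arbitrary $q\in\mQ$ we take $f=\mB^*q\in\Ima\mB^*$; injectivity yields $q=\mL_{\mB^*}(\mB^*q)$, whence $\Vert q\Vert_{\mQ}=\Vert\mL_{\mB^*}(\mB^*q)\Vert_{\mQ}\leq\beta^{-1}\Vert\mB^*q\Vert_{\mV'}$, which is exactly the bounding inequality.

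For $(3)\Rightarrow(4)$, the bounding estimate immediately gives injectivity of $\mB^*$ and, by a standard Cauchy-sequence argument, closedness of $\Ima\mB^*$; Theorem \ref{teo-1} then yields that $\Ima\mB$ is closed, and combining this with $\ker\mB^*=\{0\}$ we obtain $\Ima\mB=(\ker\mB^*)^{\perp}=\mQ'$. Invoking Theorem \ref{teo-1}(3) with $\Ima\mB=\mQ'$ produces a lifting $\mL_{\mB}\in\mathcal{L}(\mQ';\mK^{\perp})\subset\mathcal{L}(\mQ';\mV)$ with $\mB(\mL_{\mB}g)=g$ and $\beta\Vert\mL_{\mB}g\Vert_{\mV}\leq\Vert g\Vert_{\mQ'}$, i.e. $\Vert\mL_{\mB}\Vert\leq1/\beta$. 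Finally $(4)\Rightarrow(1)$ is immediate, since $\mB(\mL_{\mB}g)=g$ for every $g\in\mQ'$ shows $\Ima\mB=\mQ'$.

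The point demanding the most care, and the main obstacle, is the bookkeeping of the constant $\beta$ together with the orthogonality identifications: one must check that the \emph{same} $\beta$ governs both the bounding inequality in $(3)$ and the lifting bound in $(4)$ (so that the optimal bounding constant equals the reciprocal of the lifting norm), and that the annihilator $(\Ima\mB)^{\perp}$ and the kernel $\ker\mB^*$ genuinely coincide under the Riesz identification, so that surjectivity of $\mB$ passes cleanly to $\mathcal{H}=\{0\}$ and hence to the global validity of the lifting operators.
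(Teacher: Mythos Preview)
Your proposal is correct and follows exactly the route the paper has in mind: the paper itself does not write out a proof for this corollary but presents it as a ``direct consequence'' of Theorem~\ref{teo-1} (the Closed Range Theorem), referring to \cite[Chapter~4]{boffi2013mixed}. Your cyclic chain $(1)\Rightarrow(2)\Rightarrow(3)\Rightarrow(4)\Rightarrow(1)$, driven by the observation that surjectivity of $\mB$ forces $\mathcal{H}=\ker\mB^*=\{0\}$ so that the partial statements of Theorem~\ref{teo-1} become global, is precisely how one unpacks that phrase.
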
 
We note that Theorem \ref{teo-1} and Corollary \ref{cor-1} are valid a.e. in $\mathcal{J}$.

\subsection{Mixed formulation model}
\label{subsection2-1}
Here, and in the forthcoming analysis, we will omit the time dependence of the solutions and test functions outside the time integral unless necessary in the arguments. 

Let us consider the following problem:

\begin{problem}
	\label{goal-prob-regular}
	Given $f\in L_\mathcal{J}^1(\mV')$ and $g\in L_\mathcal{J}^1(\mQ')$, find $(u,p)\in L_\mathcal{J}^1(\mV\times\mQ)$ such that
	\begin{equation*}
		\left\{
		\begin{aligned}
			a(u,v) + b(v,p) &= \langle f,v\rangle_{\mV} + \int_{0}^{t}\bigg[k_1(t,s) a(u(s),v) + k_2(t,s)b(v,p(s))\bigg]ds,\\
			b(u,q) - c(p,q)&=\langle g,q\rangle_{\mQ} + \int_{0}^{t}\bigg[k_3(t,s) b(u(s),v) - k_4(t,s)c(v,p(s))\bigg]ds
		\end{aligned}\right.
	\end{equation*}
	for all $(v,q)\in\mV\times\mQ.$
\end{problem}

Here, $k_i(\cdot,\cdot),i=1,2,3,4,$ are continuous bounded kernels, i.e., there exists a non-negative constant $C_{k_i}$ such that
$$
\vert k_i(t,s)\vert\leq C_{k_i},\quad i=1,2,3,4.
$$
Also, we assume that $k_i$ is almost everywhere in the triangle (see for instance \cite{gutierrez2014engineering})
\begin{equation}
\label{eq:triangle}
\mathcal{T}:= \bigg\{ \tau \in \mathcal{J} \;\;\vert\;\; 0\leq \tau\leq t,\quad t\in \mathcal{J} \bigg\}.
\end{equation}
Note that when $k_i(t,s)=0 $ a.e $\in\mathcal{T}$, we have a well-known mixed formulation which has been analyzed in \cite[Chapter 4]{boffi2013mixed} in the context of elliptic problems. 

The corresponding operator form of the proposed mixed problem reads as follows:
\begin{problem}
	\label{po-global-prob-regular}
	Given $f\in L_\mathcal{J}^1(\mV')$ and $g\in L_\mathcal{J}^1(\mQ')$, find $(u,p)\in L_\mathcal{J}^1(\mV\times\mQ)$ such that
	\begin{equation*}
		\left\{\begin{aligned}
			\mA u(t) +\mB^*p(t) &=f(t) + \int_{0}^{t}\bigg[k_1(t,s)\mA u(s)+ k_2(t,s)\mB^*p(s)\bigg]ds,\\
			\mB u(t) -\mC p(t)&= g(t)+ \int_{0}^{t}\bigg[k_3(t,s)\mB u(s) -k_4(t,s)\mC p(s)\bigg]ds.
		\end{aligned}\right.
	\end{equation*}
\end{problem}

We now introduce some additional notations and properties derived from Assumption \ref{assumption-1}. 

Let us define the semi-norms
\begin{equation}
	\label{seminorms}
	\vert v\vert_a^2:=a(v,v),\hspace{1cm}\vert q\vert_c^2:=c(q,q).
\end{equation}
Due to the continuity of $a(\cdot,\cdot)$ and $c(\cdot,\cdot)$, it is clear that
\begin{equation}
	\label{continuity-seminorms}
	\vert v\vert_a^2\leq\Vert a\Vert\,\Vert v\Vert_\mV^2,\;\;\;\forall v\in \mV\hspace{1cm}\vert q\vert_c^2\leq\Vert c\Vert\,\Vert q\Vert_\mQ^2,\;\;\;\forall q\in \mQ.
\end{equation}
Also, from \cite[Lemma 4.2.1]{boffi2013mixed} and \eqref{seminorms} we have
\begin{equation}
	\label{continuity-seminorms-bilinear}
	a(u,v)\leq \vert u\vert_a\,\vert v\vert_a \hspace{0.2cm} \mbox{and}\hspace{0.2cm}c(p,q)\leq\vert p\vert_c\,\vert q\vert_c,
\end{equation}
and
\begin{equation}
	\label{continuity-seminorms-operators}
	\Vert \mA u\Vert_{\mV'}^2\leq\Vert a\Vert\,\vert u\vert_a^2 \hspace{0.2cm}\mbox{and}\hspace{0.2cm}\Vert \mC p\Vert_{\mQ'}^2\leq \Vert c\Vert\,\vert p\vert_c^2.
\end{equation}

Since $\mathcal{K}$ and $\mathcal{H}$ are closed subspaces of $\mV$ and $\mQ$, respectively, we know that $\mV=\mathcal{K}\oplus\mathcal{K}^{\bot}$ and $\mQ=\mathcal{H}\oplus\mathcal{H}^{\bot}$. Hence, from Assumption \ref{assumption-1} we have that each $v\in \mV$ and $q\in\mQ$ can be written as 
\begin{equation*}
	\label{split-v-q}
	v=v_0+ \overline{v},\qquad q=q_0+\overline{q},
\end{equation*}
with $v_0\in \mK, \overline{v}\in \mK^{\perp}, q_0\in \mH$ and $\overline{q}\in \mH^{\perp}$. Note that
\begin{equation*}
	\label{bilinear-form-b-K-H}
	b(v,q)=b(\overline{v},q)=b(\overline{v},\overline{q})=b(v,\overline{q}).
\end{equation*}
In a similar way, we split $f\in \mV'$ and $g\in \mQ'$ as
\begin{equation*}
	\label{functions-f-g-K-H}
	f=f_0+\overline{f}\quad\text{ and }\quad g=g_0+\overline{g},
\end{equation*}
with $f_0\in \mK',~ \overline{f}\in(\mK^{\perp})',~ g_0\in \mH'$ and $\overline{g}\in (\mH^{\perp})'$ and, in virtue of the Riesz representation Theorem, we note that
\begin{equation}
	\label{functions-f-g-K-H-2}
	\langle f,v\rangle_{\mV} =\langle f_0,v_0\rangle_{\mV} + \langle \overline{f},\overline{v}\rangle_{\mV} \hspace{0.2cm}\mbox{and}\hspace{0.2cm} \langle g,q\rangle_{\mQ} =\langle g_0,q_0\rangle_{\mQ}  +\langle \overline{g},\overline{q}\rangle_{\mQ}.
\end{equation}

The next stability result will use several arguments from the proof given in \cite[Theorem 4.3.1]{boffi2013mixed}. In such theorem, half of the constants are shown, and the rest are inferred by the symmetry of the problem. In our case, something similar will happen.

\begin{teo}[Main Theorem 1]
	\label{teo8}
	Together with Assumption \ref{assumption-1}, assume that $\Ima\mB$ is closed and there exists  $\beta>0$ such that 
	$$\displaystyle\sup_{v\in \mV}\frac{b(v,q)}{\Vert v\Vert_\mV}\geq \beta \Vert q\Vert_\mQ,\;\;\forall q\in \mH^{\perp}\hspace{0.2cm}\mbox{and}\hspace{0.2cm}\sup_{q\in \mQ}\frac{b(v,q)}{\Vert q\Vert_\mQ}\geq \beta \Vert v\Vert_\mV,\;\;\forall v\in \mK^{\perp}.
	$$
	Then, for all $f\in L_{\mathcal{J}}^1(\mV')$ and $g\in L_{\mathcal{J}}^1(\mQ')$, we have that Problem \ref{goal-prob-regular} has a unique solution that  satisfies
	\begin{equation*}
		\Vert u\Vert_{L_\mathcal{J}^1(\mV)}+\Vert p\Vert_{L_\mathcal{J}^1(\mQ)}\leq C\left( \Vert f\Vert_{L_\mathcal{J}^1(\mV')} + \Vert g\Vert_{L_\mathcal{J}^1(\mQ')}\right),
	\end{equation*}
	where $C$ is a positive constant depending on  $\alpha_0,\beta,\gamma_0$, the continuity constants $\Vert a \Vert, \Vert c\Vert$, the kernel constants $C_{k_i},\,i=1,2,3,4,$ and the time observation $T$. Moreover, there exist positive constants $C_i, i=1,\ldots,16$ such that
	\begin{equation*}
		\Vert \overline{u}\Vert_{L_\mathcal{J}^1(\mV)}\leq C_9\Vert\overline{f}\Vert_{L_\mathcal{J}^1(\mV')} + C_{13}\Vert f_0\Vert_{L_\mathcal{J}^1(\mV')} + C_3\Vert \overline{g}\Vert_{L_\mathcal{J}^1(\mQ')} + C_7\Vert g_0\Vert_{L_\mathcal{J}^1(\mQ')},
	\end{equation*}
	\begin{equation*}
		\Vert u_0\Vert_{L_\mathcal{J}^1(\mV)}\leq C_{10}\Vert\overline{f}\Vert_{L_\mathcal{J}^1(\mV')} + C_{14}\Vert f_0\Vert_{L_\mathcal{J}^1(\mV')} + C_4\Vert \overline{g}\Vert_{L_\mathcal{J}^1(\mQ')} + C_8\Vert g_0\Vert_{L_\mathcal{J}^1(\mQ')},
	\end{equation*}
	\begin{equation*}
		\Vert \overline{p}\Vert_{L_\mathcal{J}^1(\mQ)}\leq C_{11}\Vert\overline{f}\Vert_{L_\mathcal{J}^1(\mV')} + C_{15}\Vert f_0\Vert_{L_\mathcal{J}^1(\mV')} + C_1\Vert \overline{g}\Vert_{L_\mathcal{J}^1(\mQ')} + C_5\Vert g_0\Vert_{L_\mathcal{J}^1(\mQ')},
	\end{equation*}
	\begin{equation*}
		\Vert p_0\Vert_{L_\mathcal{J}^1(\mQ)}\leq C_{12}\Vert\overline{f}\Vert_{L_\mathcal{J}^1(\mV')} + C_{16}\Vert f_0\Vert_{L_\mathcal{J}^1(V')} + C_2\Vert \overline{g}\Vert_{L_\mathcal{J}^1(\mQ')} + C_6\Vert g_0\Vert_{L_\mathcal{J}^1(\mQ')}.
	\end{equation*}
\end{teo}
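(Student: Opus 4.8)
\emph{Proof strategy.} The plan is to recast the coupled Volterra system as a classical mixed problem carrying a memory-dependent load, and then to combine the stationary stability theory of \cite[Theorem 4.3.1]{boffi2013mixed} with a Gronwall argument. Concretely, for almost every fixed $t\in\mathcal{J}$ I would freeze the integral terms and regard
$$
\widetilde{f}(t):=f(t)+\int_0^t\big[k_1(t,s)\mA u(s)+k_2(t,s)\mB^*p(s)\big]\,ds,\qquad
\widetilde{g}(t):=g(t)+\int_0^t\big[k_3(t,s)\mB u(s)-k_4(t,s)\mC p(s)\big]\,ds
$$
as data, so that $(u(t),p(t))$ solves the stationary problem $\mA u+\mB^*p=\widetilde f$, $\mB u-\mC p=\widetilde g$. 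Under Assumption \ref{assumption-1}, the closedness of $\Ima\mB$ and the two inf-sup bounds, Theorem \ref{teo-1} applies a.e.\ in $\mathcal{J}$, and the proof of \cite[Theorem 4.3.1]{boffi2013mixed} furnishes, at each frozen time, the sixteen elementary constants $C_1,\dots,C_{16}$ relating the four components $\overline u,u_0,\overline p,p_0$ to the four data components $\overline{\widetilde f},\widetilde f_0,\overline{\widetilde g},\widetilde g_0$; as in that reference I would derive half of them directly and infer the rest from the symmetric roles of $(\mA,f,\mK)$ and $(\mC,g,\mH)$.

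The next step is to control the memory contributions to $\widetilde f,\widetilde g$ by the solution itself, respecting the orthogonal structure. Since $\langle\mB^*q,v_0\rangle_\mV=0$ for $v_0\in\mK$, the term $\mB^*p$ lies in the annihilator of $\mK$ and hence contributes only to $\overline{\widetilde f}$; dually $\mB u=\mB\overline u$ contributes only to $\overline{\widetilde g}$, whereas $\mA u$ and $\mC p$ feed both the kernel and the complement parts. To keep every constant free of the perturbation parameter (which is hidden in $c$ and hence in $\mC$), I would route these bounds through the seminorms: by \eqref{continuity-seminorms-operators} one has $\Vert\mA u(s)\Vert_{\mV'}\le\Vert a\Vert^{1/2}\vert u(s)\vert_a$ and $\Vert\mC p(s)\Vert_{\mQ'}\le\Vert c\Vert^{1/2}\vert p(s)\vert_c$, and the coercivity of $a,c$ on $\mK,\mH$ together with the inf-sup bounds converts the seminorms of the components back into $\Vert\overline u\Vert_\mV,\Vert u_0\Vert_\mV,\Vert\overline p\Vert_\mQ,\Vert p_0\Vert_\mQ$. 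Using $\vert k_i(t,s)\vert\le C_{k_i}$, each memory integral is then dominated by $\int_0^t\big(\Vert u(s)\Vert_\mV+\Vert p(s)\Vert_\mQ\big)\,ds$ up to explicit constants.

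Substituting these bounds into the sixteen elementary estimates yields, for a.e.\ $t$, a Volterra integral inequality of the form
$$
\Vert\overline u(t)\Vert_\mV+\Vert u_0(t)\Vert_\mV+\Vert\overline p(t)\Vert_\mQ+\Vert p_0(t)\Vert_\mQ
\le D_0\big(\Vert f(t)\Vert_{\mV'}+\Vert g(t)\Vert_{\mQ'}\big)+D_1\int_0^t\big(\Vert u(s)\Vert_\mV+\Vert p(s)\Vert_\mQ\big)\,ds,
$$
with $D_0,D_1$ built only from $\alpha_0,\beta,\gamma_0,\Vert a\Vert,\Vert c\Vert,C_{k_i}$. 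Setting $\phi(t):=\Vert u(t)\Vert_\mV+\Vert p(t)\Vert_\mQ$ and $h:=\Vert f\Vert_{\mV'}+\Vert g\Vert_{\mQ'}$, the classical Gronwall inequality gives $\phi(t)\le D_0 h(t)+D_0D_1 e^{D_1 t}\int_0^t h(s)\,ds$; integrating over $\mathcal{J}=[0,T]$ and using $T<\infty$ produces the aggregate $L^1_\mathcal{J}$ bound with $C=C(\alpha_0,\beta,\gamma_0,\Vert a\Vert,\Vert c\Vert,C_{k_i},T)$. Retaining instead the component-wise splitting throughout and applying Gronwall to each inequality separately delivers the four refined estimates with the relabelled constants $C_1,\dots,C_{16}$. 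Existence and uniqueness I would obtain in parallel: the reformulated system is a linear Volterra equation of the second kind for $(u,p)\in L^1_\mathcal{J}(\mV\times\mQ)$ whose solution operator is bounded (by the stationary theory) and whose kernel is bounded, so the Neumann series converges on all of $[0,T]$, the fixed point is unique, and the a priori estimate certifies the stated stability.

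\emph{Main obstacle.} The hard part is not Gronwall but the component bookkeeping under the parameter-independence requirement: when $\mA u$ and $\mC p$ are split into their $\mK/\mK^\perp$ and $\mH/\mH^\perp$ parts one must ensure the resulting constants never absorb $\Vert\mC\Vert$, or the hidden small parameter, in a way that degenerates as that parameter vanishes. This forces the whole chain of estimates to be carried out in the seminorms $\vert\cdot\vert_a,\vert\cdot\vert_c$ and closed using only $\alpha_0,\gamma_0,\beta$, exactly as in \cite[Theorem 4.3.1]{boffi2013mixed}, so that the Volterra perturbation contributes only the harmless factor $e^{D_1T}$.
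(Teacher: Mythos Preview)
Your approach is correct and leads to a valid proof, but it differs substantially from the paper's argument, and the comparison is instructive.

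You treat the memory as a perturbation of the data: freeze $t$, invoke the stationary estimates of \cite[Theorem 4.3.1]{boffi2013mixed} to bound the four components of $(u(t),p(t))$ by the components of $(\widetilde f(t),\widetilde g(t))$, then absorb the history integrals by Gronwall. This is modular and clean; the sixteen constants come out as the stationary Boffi constants multiplied by a single Gronwall factor of the form $1+D_1Te^{D_1T}$. The paper instead performs a case split $f=0$ versus $g=0$ and, crucially, exploits a structural shortcut you do not use: when $f=0$, the first equation is a \emph{homogeneous} linear Volterra equation for the quantity $\mA u+\mB^*p$, so Gronwall applied to $\Vert\mA u+\mB^*p\Vert_{\mV'}$ yields the \emph{identity} $\mA u+\mB^*p=0$ a.e.\ in $\mathcal J$, not merely a bound. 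This collapses the first equation to its stationary form exactly, and the authors then rerun the internal steps of Boffi's proof with Volterra terms present only in the second equation, producing fully explicit constants $C_1,\dots,C_{16}$ (listed at the end of their proof). Your route trades that explicitness for a shorter argument; both reach the statement as written.

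One remark on your ``Main obstacle'' paragraph: the concern about keeping constants free of a small parameter hidden in $c$ is misplaced here. In this theorem the constants are \emph{allowed} to depend on $\gamma_0$ and $\Vert c\Vert$; the parameter-robust version is Theorem \ref{teo-p_lambda}, which has different hypotheses (surjective $\mB$, $c(p,q)=\lambda(p,q)_\mQ$) and a genuinely different proof. So for the present statement you need not route everything through the seminorms $\vert\cdot\vert_a,\vert\cdot\vert_c$; the direct operator bounds $\Vert\mA u\Vert_{\mV'}\le\Vert a\Vert\,\Vert u\Vert_\mV$, $\Vert\mC p\Vert_{\mQ'}\le\Vert c\Vert\,\Vert p\Vert_\mQ$, $\Vert\mB^*p\Vert_{\mV'}\le\Vert b\Vert\,\Vert p\Vert_\mQ$ already close your Gronwall loop. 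Your instinct is right for the next theorem, just not for this one.
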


\begin{proof}
	The theorem assumptions guarantees that the operator $$\begin{bmatrix}
		\mA &\mB^*\\\mB & \mC 
	\end{bmatrix}\in \mathcal{L(\mV\times\mQ;\mV'\times\mQ')}, $$ is invertible. Then, from  \cite[Chapter 2, Section 3]{gripenberg1990volterra} we conclude that there exists a unique pair $(u,p)\in L_\mathcal{J}^1(\mV\times\mQ)$ solution to Problem \ref{goal-prob-regular} (resp. to Problem \ref{po-global-prob-regular}). 
	
	To obtain the estimates of $\overline{u}$, $u_0$, $\overline{p}$ and $p_0$, we will adapt the proof of  \cite[Theorem 4.3.1]{boffi2003finite} to our case. We begin considering as a first case, $f=0$, where the  obtained estimates are similar to those  for the case $g=0$, due to the symmetry as we will see below. 
	
	Since $f=0$, we take norms in the first equation of Problem \ref{po-global-prob-regular} and use the boundedness of the linear operators and the kernels, in order to obtain
	$$
	\begin{aligned}
		\Vert \mA u + \mB^*p\Vert_{\mV'}\leq C_{\widetilde{k}}\int_{0}^t\Vert \mA u(s)+\mB^*p(s)\Vert_{\mV'}ds
	\end{aligned}
	$$
	where $C_{\widetilde{k}}=\max\{C_{k_1},C_{k_2} \}$. Then, from Gronwall's lemma we obtain that
	\begin{equation}
		\label{teo_8-001}
		\mA u +\mB^*p=0\qquad \text{ a. e. in }\mathcal{J},
	\end{equation}
	or equivalently,
	\begin{equation}
		\label{teo_8-002}
		a(u,v)+ b(v,p)=0 \qquad \forall v\in\mV.
	\end{equation}
	Testing \eqref{teo_8-002} with $u_0(t)\in \mK\subset{\mV}$ we have that
	\begin{equation*}
		\label{teo-goal-001}
		a(u,u_0)=-b(u_0,p)=0.
	\end{equation*}
	Then, from \eqref{seminorms} it follows that
	\begin{equation*}
		\label{teo-goal-002}
		\begin{aligned}
			\vert u_0\vert_a^2=a(u,u_0)-a(\overline{u},u_0)\leq \vert\overline{u}\vert_a\vert u_0\vert_a.
		\end{aligned}
	\end{equation*}
	Using the estimates $\vert v\vert_a\leq\Vert a\Vert^{1/2}\Vert v\Vert_{\mV}$ and $\alpha_0^{1/2}\Vert v\Vert_{\mV}\leq \vert v\vert_a$, we have that
	\begin{equation}
		\label{teo-goal-005}
		\Vert u_0\Vert_{\mV}\leq\frac{\Vert a \Vert^{1/2}}{\alpha_0^{1/2}}\Vert\overline{u}\Vert_{\mV}.
	\end{equation}

	On the other hand, by testing \eqref{teo_8-002} with $v=u(t)\in\mV$, along with \eqref{teo_8-002} and the continuity constants of $a(\cdot,\cdot)$ and $b(\cdot,\cdot)$, we have that
	\begin{equation}
		\label{teo-goal-006}
		\begin{aligned}
			a(u,u)+b(u,p)&\leq I(t)\bigg(\Vert u\Vert_{\mV}+\Vert p\Vert_{\mQ}\bigg),
		\end{aligned}
	\end{equation}
	where
	$$
	I(t)=\max\{C_{\widetilde{k}},C_{\overline{k}}\}\max\left\{\Vert a\Vert,\Vert b\Vert,\Vert c\Vert\right\}\bigg[\int_{0}^t\Vert u(s)\Vert_{\mV}+\int_{0}^t\Vert p(s)\Vert_{\mQ}ds\bigg].
	$$
	where $C_{\overline{k}}=\max\{C_{k_3},C_{k_4} \}$. Similarly, by testing the second equation in Problem \ref{goal-prob-regular} with $q=p(t)\in\mQ$, we have that
	\begin{equation}
		\label{teo-goal-007}
		\begin{aligned}
			b(u,p)-c(p,p)&\leq \langle g,p\rangle_{\mQ} + I(t)\bigg(\Vert u\Vert_{\mV}+\Vert p\Vert_{\mQ}\bigg).
		\end{aligned}
	\end{equation}
	Thus, subtracting \eqref{teo-goal-007} from \eqref{teo-goal-006} and using \eqref{continuity-seminorms-operators}, yield to
	\begin{equation}
		\label{teo8-eq-6}
		\frac{\Vert \mA u\Vert_{\mV'}^2}{\Vert a \Vert} + \frac{\Vert \mC p\Vert_{\mQ'}^2}{\Vert c\Vert}\leq -\langle g, p\rangle_{\mQ}.
	\end{equation}
	We now divide the analysis. Let us consider $g_{0}=0$.
	\\
	\underline{\textbf{\textit{Case 1}}}: We consider $g_0=0$.	From \eqref{functions-f-g-K-H-2} we rewrite \eqref{teo8-eq-6} as follows
	\begin{equation}
		\label{teo8-eq-7}
		\frac{\Vert \mA u\Vert_{\mV'}^2}{\Vert a \Vert} + \frac{\Vert \mC p\Vert_{\mQ'}^2}{\Vert c\Vert}\leq -\langle \overline{g}, \overline{p}\rangle_{\mQ}.
	\end{equation}
	Using \eqref{teo_8-001}, inequality \eqref{teo8-eq-7}, the inf-sup condition on $\mB^*\overline{p}$, the fact that $\mB^*p=\mB^*\overline{p}$, since $p-\overline{p}=p_0\in \mathcal{H}$, we obtain
	\begin{equation*}
		\label{teo-goal-008}
		\begin{aligned}
			\Vert \mB^*\overline{p}\Vert_{\mQ}^2=\Vert \mB^*p\Vert_{\mV'}^2=\Vert \mA u\Vert_{\mV'}^2	\leq \Vert a\Vert\, \Vert\overline{g}\Vert_{\mQ'}\Vert\overline{p}\Vert_{\mQ}\leq\frac{1}{\beta} \Vert a\Vert\, \Vert\overline{g}\Vert_{\mQ'}\Vert\mB^*\overline{p}\Vert_{\mV'}.
		\end{aligned}
	\end{equation*}
	Hence, from the inf-sup condition of $\mB^*$ we have that
	\begin{equation*}
		\label{teo-goal-009}
		\Vert \overline{p}\Vert_{\mQ}\leq \frac{1}{\beta}\Vert \mB^*p\Vert_{\mV'}\leq C_1\Vert \overline{g}\Vert_{\mQ'},
	\end{equation*}
	where $C_1:=\Vert a \Vert /\beta^2$. Hence, integrating in $\mathcal{J}$ yields
	\begin{equation}
		\label{teo-goal-estimate-1}
		\Vert \overline{p}\Vert_{L_\mathcal{J}^1(\mQ)}\leq C_1 \Vert \overline{g}\Vert_{L_\mathcal{J}^1(\mQ')}.
	\end{equation}
	
	The next step is to obtain a bound for the remaining term $p_{0}$. To accomplish this task, testing the second equation in Problem \ref{goal-prob-regular} with $q=p_0(t)$, we observe that $b(u,p_0)=\langle \overline{g},p_0\rangle_{\mQ}=0$, since $p_0\in\mathcal{H}$. The splitting $c(p,p_0)=c(p_0,p_0)+c(\overline{p},p_0)$, in combination with \eqref{seminorms}, implies that
	\begin{equation*}
		\label{teo8-eq-11}
		\begin{aligned}
			\vert p_0\vert_c^2=c(p_0,p_0)&=c(p,p_0)-c(\overline{p},p_0)\\
			&=\int_{0}^tk_4(t,s)\bigg[c(p_0(s),p_0(t))+c(\overline{p}(s),p_0(t)) \bigg]ds - c(\overline{p},p_0).
		\end{aligned}
	\end{equation*}
	Then, from \eqref{continuity-seminorms-bilinear} we have that
	\begin{equation*}
		\label{teo8-eq-12}
		\begin{aligned}
			\vert p_0\vert_c&\leq C_{k_4}\int_{0}^{t}\vert p_0(s)\vert_c ds + C_{k_4}\int_{0}^T\vert\overline{p}(s)\vert_c ds + \vert \overline{p}\vert_c.
		\end{aligned}
	\end{equation*}
	Therefore, applying the Gronwall's lemma, allow us to conclude that
	\begin{equation*}
		\label{teo8-eq-13}
		\vert p_0\vert_c\leq \vert \overline{p}\vert_c + C_{k_4}\bigg[1 +e^{C_{k_4}T}(1+C_{k_4}T) \bigg]\int_{0}^{T}\vert \overline{p}(s)\vert_cds.
	\end{equation*}
	Using that $\Vert p_0\Vert_{\mQ}\leq \frac{1}{\gamma_0^{1/2}}\vert p_0\vert_c$ and $\vert \overline{p}\vert_c\leq \Vert c\Vert^{1/2}\Vert \overline{p}\Vert_{\mQ}$, in the previous inequality, implies that
	\begin{equation}
		\label{teo8-eq-14}
		\Vert p_0\Vert_{\mQ}\leq \frac{\Vert c\Vert^{1/2}}{\gamma_0^{1/2}}\bigg\{\Vert \overline{p}\Vert_{\mQ} + C_{k_4}\bigg[1 +e^{C_{k_4}T}(1+C_{k_4}T) \bigg] \int_{0}^{T}\Vert \overline{p}(s)\Vert_{\mQ}\,ds\bigg\}.
	\end{equation}
	Then, the desired bound for $\Vert p_0\Vert_{L_\mathcal{J}^1(\mQ)}$ follows by integrating \eqref{teo8-eq-14} in $\mathcal{J}$ and \eqref{teo-goal-estimate-1}, this is
	\begin{equation*}
		\label{teo8-eq-estimate2}
		\Vert p_0\Vert_{L_\mathcal{J}^1(\mQ)}\leq C_{2}\Vert \overline{g}\Vert_{L_\mathcal{J}^1(\mQ')},
	\end{equation*}
	where the constant $C_{2}$ is given by
	\begin{equation*}
		\label{teo-goal-c2}
		C_2:= C_1\frac{\Vert c\Vert^{1/2}}{\gamma_0^{1/2}}\bigg\{1 +C_{k_4}T\bigg[1+e^{C_{k_4}T}(1+C_{k_4}T)\bigg] \bigg\}\Vert \overline{g}\Vert_{L_\mathcal{J}^1(\mQ')}.
	\end{equation*}
	
	Our next goal is to obtain estimates for $u_0$ and $\overline{u}$. To accomplish this task, first we observe that from \eqref{teo8-eq-7} and the second equation in Problem \ref{po-global-prob-regular}, we derive the following estimate
	\begin{equation*}
		\label{teo8-eq-15}
		\begin{aligned}
			\bigg\Vert \mB u-\overline{g}- \int_{0}^{t}\bigg[k_3(t,s)\mB u(s)-k_4(t,s)\mC p(s) \bigg]ds\bigg\Vert_{\mQ'}^2&=\Vert \mC p\Vert_{\mQ}^2\leq \frac{\mu^2}{\beta^2}\Vert \overline{g}\Vert_{\mQ'}^2
		\end{aligned}
	\end{equation*}
	where $\mu^2=\Vert c \Vert\,\Vert a\Vert$. From the triangle inequality, the continuity of $\mC$, and the split $p=p_0+\overline{p}$, we obtain
	\begin{equation*}
		\label{teo-goal-0012}
		\begin{aligned}
			&\Vert \mB \overline{u}\Vert_{\mQ'}\leq \left(\frac{\mu+\beta}{\beta}\right)\Vert\overline{g}\Vert_{\mQ'} + C_{k_4}\int_{0}^t\Vert \mC p(s)\Vert_{\mQ'}ds + C_{k_3}\int_{0}^t\Vert \mB \overline{u}(s)\Vert_{\mQ'}ds\\
			&\leq \left(\frac{\mu+\beta}{\beta}\right)\Vert\overline{g}\Vert_{\mQ'}+C_{k_4}\Vert c\Vert\int_{0}^t\bigg(\Vert p_0(s)\Vert_{\mQ}+\Vert \overline{p}(s)\Vert_{\mQ}\bigg)ds+C_{k_3}\int_{0}^t\Vert \mB \overline{u}(s)\Vert_{\mV'}\\
			&\leq \left(\frac{\mu+\beta}{\beta}\right)\Vert\overline{g}\Vert_{\mQ'} + (C_1+C_2)C_{k_4}\Vert c\Vert \Vert\overline{g}\Vert_{L_\mathcal{J}^1(\mQ')}+C_{k_3}\int_{0}^t\Vert \mB \overline{u}(s)\Vert_{\mV'}.
		\end{aligned}
	\end{equation*}
	Since $\overline{u}\in\mH^{\perp}$, we apply the Gronwall's lemma, together with the inf-sup condition of $\mB$, in order to obtain
	\begin{equation}
		\label{teo-goal-estimate-overline-u}
		\Vert \overline{u}\Vert_{L_\mathcal{J}^1(\mV)}\leq C_3\Vert\overline{g}\Vert_{L_\mathcal{J}^1(\mQ')},
	\end{equation}
	where $C_3$ is defined by
	\begin{equation*}
		\label{teo-goal-c3}
		C_3:=\frac{1}{\beta}\left(1+Te^{C_{k_3}T}\right)\left[\frac{\mu+\beta}{\beta}+ C_{k_4}T\Vert c\Vert (C_1+C_2) \right].
	\end{equation*}
	
	Hence, by gathering \eqref{teo-goal-005} and \eqref{teo-goal-estimate-overline-u}, we derive the following estimate for $u_0$
	\begin{equation*}
		\label{teo-goal-estimate-u-0}
		\Vert u_0\Vert_{L_\mathcal{J}^1(\mV)}\leq C_4\Vert \overline{g}\Vert_{L_\mathcal{J}^1(\mQ')},
	\end{equation*}
	where $C_4:=(\Vert a \Vert/\alpha_0)^{1/2}$.
	
	\underline{\textbf{\textit{Case 2}}}: We consider $\overline{g}=0$. From \eqref{functions-f-g-K-H-2} and \eqref{teo8-eq-7}  we write
	\begin{equation}
		\label{teo8-eq-19}
		\frac{\Vert \mA u\Vert_{\mV'}^2}{\Vert a \Vert} + \frac{\Vert \mC p\Vert_{\mQ'}^2}{\Vert c\Vert}\leq -\langle g_0, p_0\rangle_{\mQ}.
	\end{equation}
	Note that $p_0\in\mathcal{H}=\ker \mB^*$. Then, using \eqref{seminorms}, \eqref{continuity-seminorms}, \eqref{continuity-seminorms-bilinear}, and the continuity of $c(\cdot,\cdot)$,  we have that
	\begin{equation*}
		\label{teo8-eq-20}
		\begin{aligned}
			\vert p_0\vert_c^2&=c(p,p_0)-c(\overline{p},p_0)\\
			&=-\langle g_0,p_0\rangle_{\mQ}- \int_{0}^tk_4(t,s)\bigg[c(p_0(s),p_0) + c(\overline{p}(s),p_0)\bigg]ds - c(\overline{p},p_0)\\
			&\leq\frac{1}{\gamma_0^{1/2}}\Vert g_0\Vert_{\mQ'}\vert p_0\vert_c +C_{k_4} \int_{0}^t\bigg(\vert p_0(s)\vert_{c}+\vert \overline{p}(s)\vert_{c} \bigg)\,ds\vert p_0\vert_{c} + \vert\overline{p}\vert_{c}\vert p_0\vert_{c}.
		\end{aligned}
	\end{equation*}
	Using again that $\Vert p_0\Vert_{\mQ}\leq \frac{1}{\gamma_0^{1/2}}\vert p_0\vert_c$, together with \eqref{continuity-seminorms}, yields
	\begin{equation*}
		\label{teo8-eq-21}
		\begin{aligned}
			&\Vert p_0\Vert_{\mQ}\leq \frac{1}{\gamma_0}\Vert g_0\Vert_{\mQ'}+\frac{C_{k_4}\Vert c\Vert^{1/2}}{\gamma_0^{1/2}}\int_{0}^t\bigg[\Vert p_0(s)\Vert_{\mQ}+\Vert \overline{p}(s)\Vert_{\mQ} \bigg]ds + \frac{\Vert c\Vert^{1/2}}{\gamma_0^{1/2}}\Vert \overline{p}\Vert_{\mQ}.\\
		\end{aligned}
	\end{equation*}
	Hence, applying Gronwall's lemma to the inequality above we obtain
	\begin{equation}
		\label{teo8-eq-24}
		\Vert p_0\Vert_{\mQ}\leq R(t) + \frac{C_{k_4}\Vert c\Vert^{1/2}e^{\displaystyle\frac{C_{k_4}\Vert c\Vert^{1/2}T}{\gamma_0^{1/2}}}}{\gamma_0^{1/2}}\int_{0}^t R(s)\,ds.
	\end{equation}
	where
	\begin{equation*}
		\displaystyle R(t):=\frac{1}{\gamma_0}\Vert g_0\Vert_{\mQ'}+ \frac{\Vert c\Vert^{1/2}}{\gamma_0^{1/2}}\Vert \overline{p}\Vert_{\mQ}+\frac{C_{k_4}\Vert c\Vert^{1/2}}{\gamma_0^{1/2}}\int_{0}^t\Vert \overline{p}(s)\Vert_{\mQ}.
	\end{equation*}	
	Now, integrating \eqref{teo8-eq-24} in $\mathcal{J}$, we derive the following estimate for $p_0$,
	\begin{equation}
		\label{teo8-eq-estimate6-pre}
		\begin{aligned}
			&\Vert p_0\Vert_{L_\mathcal{J}^1(\mQ)}\leq C_{p_0}\bigg[\frac{1}{\gamma_0}\Vert g_0\Vert_{L_\mathcal{J}^1(\mQ')} + \frac{\Vert c\Vert^{1/2}}{\gamma_0^{1/2}}(1+ TC_{k_4})\Vert \overline{p}\Vert_{L_\mathcal{J}^1(\mQ)} \bigg],
		\end{aligned}
	\end{equation}
	where
	$$
	C_{p_0}:=1+\frac{TC_{k_4}\Vert c\Vert^{1/2}e^{\displaystyle\frac{C_{k_4}\Vert c\Vert^{1/2}T}{\gamma_0^{1/2}}}}{\gamma_0^{1/2}}.
	$$
	
	The following step is to estimate $\overline{p}$, and consequently $p_0$. To accomplish this task, we observe that $\mB^* p=\mB^* \overline{p}$. Then, from \eqref{teo_8-001} along with \eqref{teo8-eq-19},  we have that
	\begin{equation}
		\label{teo-goal-0015}
		\begin{aligned}
			\Vert\mB^*\overline{p}\Vert_{\mV'}^2&=\Vert\mB^*p\Vert_{\mV'}^2= \Vert \mA u\Vert_{\mV'}^2\leq \Vert a\Vert\,\Vert g_0\Vert_{\mQ'}\Vert p_0\Vert_{\mQ}.
		\end{aligned}
	\end{equation}
	Hence, using the Cauchy inequality $ab\leq \varepsilon a^2+\frac{b^2}{4\varepsilon}$ with $\varepsilon=\beta^2\gamma_0/4\Vert c\Vert$ in the right side of \eqref{teo-goal-0015},  results
	\begin{equation}
		\label{teo-8-0017}
		\begin{aligned}
			\Vert a\Vert\,\Vert g_0\Vert_{\mQ'}\Vert p_0\Vert_{\mQ}&\leq \frac{\Vert c\Vert\,\Vert a\Vert^2\Vert g_0\Vert_{\mQ'}^2}{\beta^2\gamma_0}+\frac{\beta^2\gamma_0\Vert p_0\Vert_{\mQ}^2}{4\Vert c\Vert}\\
			&\leq \left( \frac{\Vert c\Vert^{1/2}\,\Vert a\Vert\Vert g_0\Vert_{\mQ'}}{\beta\gamma_0^{1/2}}+\frac{\beta\gamma_0^{1/2}\Vert p_0\Vert_{\mQ}}{2\Vert c\Vert^{1/2}} \right)^2.
		\end{aligned}
	\end{equation}
	Thanks to the inf-sup condition of $\mB^*$ we have that $\beta\Vert \overline{p}\Vert_{\mQ}\leq\Vert \mB^*\overline{p}\Vert_{\mV'}$. This fact, together with \eqref{teo8-eq-24}, \eqref{teo-goal-0015} and \eqref{teo-8-0017}, allow us to obtain
	\begin{equation*}
		\begin{aligned}
			\beta\Vert \overline{p}\Vert_{\mQ}&\leq \frac{\Vert c\Vert^{1/2}\,\Vert a\Vert\Vert g_0\Vert_{\mQ'}}{\beta\gamma_0^{1/2}}+\frac{\beta\gamma_0^{1/2}\Vert p_0\Vert_{\mQ}}{2\Vert c\Vert^{1/2}}\\
			&\leq \frac{\Vert c\Vert^{1/2}\,\Vert a\Vert\Vert g_0\Vert_{\mQ'}}{\beta\gamma_0^{1/2}}+\frac{\beta\gamma_0^{1/2}R(t)}{2\Vert c\Vert^{1/2}}+  \frac{\beta C_{k_4}e^{\displaystyle\frac{C_{k_4}\Vert c\Vert^{1/2}T}{\gamma_0^{1/2}}}}{2}\int_{0}^t R(s)ds.
		\end{aligned}
	\end{equation*}
	Replacing $R(t)$ and rearranging terms, yields to
	$$
	\Vert \overline{p}\Vert_{\mQ}\leq G(t)+\chi\int_{0}^t\Vert \overline{p}(s)\Vert_{\mQ}\,ds,
	$$
	where
	$$
	G(t):=\frac{2}{\gamma_0}\left(\frac{\Vert c\Vert^{1/2}\Vert a\Vert}{\beta^2}+\frac{1}{2\Vert c\Vert^{1/2}} \right)\Vert g_0\Vert_{\mQ'}+\frac{C_{k_4}e^{\displaystyle\frac{C_{k_4}\Vert c\Vert^{1/2}T}{\gamma_0^{1/2}}}}{\gamma_0}\int_{0}^t\Vert g_0(s)\Vert_{\mQ'}ds,
	$$
	and $\chi=C_{k_4}\left[1+ C_{p0} \right]$. Then, the Gronwall's lemma yields to
	\begin{equation}
		\label{teo-goal-0018}
		\Vert \overline{p}\Vert_{\mQ}\leq G(t) + \chi e^{ T\chi }\int_{0}^tG(s)ds.
	\end{equation}
	Integrating \eqref{teo-goal-0018} in $\mathcal{J}$, we obtain the estimate
	\begin{equation}
		\label{teo-goal-estimate-overline-p-2}
		\Vert \overline{p}\Vert_{L_\mathcal{J}^1(\mV)}\leq C_5\Vert g_0\Vert_{L_\mathcal{J}^1(\mQ')},
	\end{equation}
	where
	\begin{equation*}
		\label{teo-goal-estimate-c5}
		C_5:=\left(1+T\chi e^{T\chi }\right)\left[\frac{1}{\gamma_0}\left(\frac{\Vert c\Vert^{1/2}\Vert a\Vert}{\beta^2}+\frac{1}{2\Vert c\Vert^{1/2}} \right)+\frac{TC_{k_4}e^{\displaystyle\frac{C_{k_4}\Vert c\Vert^{1/2}T}{\gamma_0^{1/2}}}}{2\gamma_0}\right].
	\end{equation*}
	
	Since we have now an estimate for $\overline{p}$, then an estimate for $p_0$ is easily followed from \eqref{teo8-eq-estimate6-pre} and \eqref{teo-goal-estimate-overline-p-2}, i.e., we have that 
	\begin{equation}
		\label{teo-goal-estimate-p0-2}
		\Vert p_0\Vert_{L_\mathcal{J}^1(\mV)}\leq C_6\Vert g_0\Vert_{L_\mathcal{J}^1(\mQ')},
	\end{equation}
	where
	\begin{equation*}
		\label{teo-goal-estimate-c6}
		C_6:=C_{p0}\bigg[\frac{1}{\gamma_0} + \frac{\Vert c\Vert^{1/2}}{\gamma_0^{1/2}}(1+ TC_{k_4})C_5 \bigg].
	\end{equation*}
	
	Now it only remains to provide estimates for $u_0$ and $\overline{u}$. To do this task, we first rewrite the second equation in Problem \ref{po-global-prob-regular} as $$\mB u=\mC p + g_0 + \int_{0}^{t}\bigg[k_3(t,s)\mB u(s)-k_4(t,s)\mC p(s) \bigg]ds,$$ and take the $\mQ'$ norm to obtain
	\begin{equation*}
		\label{teo8-eq-43}
		\Vert \mB u\Vert_{\mQ'}\leq \Vert \mC p\Vert_{\mQ'} + \Vert g_0\Vert_{\mQ'} + C_{k_4}\Vert \mC p\Vert_{L_\mathcal{J}^1(\mQ')}+C_{k_3}\int_{0}^t\Vert \mB u(s)\Vert_{\mQ'}ds.
	\end{equation*}
	Then, by Gronwall's lemma we conclude that
	\begin{equation}
		\label{teo8-eq-46}
		\Vert \mB u\Vert_{\mQ'}\leq M(t) + C_{k_3}e^{C_{k_3}T}\int_{0}^tG(s)ds,
	\end{equation}
	where $
	M(t):=\Vert \mC p\Vert_{\mQ'} + \Vert g_0\Vert_{\mQ'} + C_{k_4}\Vert \mC p\Vert_{L_\mathcal{J}^1(\mQ')}.
	$
	After integrating \eqref{teo8-eq-46} in $\mathcal{J}$, we obtain
	\begin{multline}
		\label{teo8-eq-48}
		\Vert \mB u\Vert_{L_\mathcal{J}^1(\mQ')}\leq\bigg(1+ TC_{k_3}e^{C_{k_3}T}\bigg)\left[(1+C_{k_4}T)\Vert \mC p\Vert_{L_\mathcal{J}^1(\mQ')} + \Vert g_0\Vert_{L_\mathcal{J}^1(\mQ')}\right].
	\end{multline}
	
	On the other hand, from \eqref{teo8-eq-19} we have that 
	\begin{equation*}
		\label{teo8-eq-40}
		\Vert \mC p\Vert_{\mQ'}^2\leq \Vert c\Vert\Vert g_0\Vert_{\mQ'}\Vert p_0\Vert_{\mQ}\leq \frac{\Vert c\Vert}{2}\bigg(\Vert g_0\Vert_{\mQ'} + \Vert p_0\Vert_{\mQ}\bigg)^2.
	\end{equation*}
	Then, invoking \eqref{teo-goal-estimate-p0-2}, we obtain the following for $\mC p$ in $L_\mathcal{J}^1(\mQ')$:
	\begin{equation*}
		\label{teo8-eq-42}
		\begin{aligned}
			\Vert \mC p\Vert_{L_\mathcal{J}^1(\mQ')}\leq\frac{\Vert c\Vert^{1/2}}{\sqrt{2}}(1+C_6)\Vert g_0\Vert_{L_\mathcal{J}^1(\mQ')}.
		\end{aligned}
	\end{equation*}
	Replacing the estimate above in \eqref{teo8-eq-48}, we obtain
	$$
	\Vert \mB u\Vert_{L_\mathcal{J}^1(\mQ')}\leq\bigg(1+ TC_{k_3}e^{C_{k_3}T}\bigg)\left[\frac{\Vert c\Vert^{1/2}}{\sqrt{2}}(1+C_6)(1+C_{k_4}T) + 1 \right]\Vert g_0\Vert_{L_\mathcal{J}^1(\mQ')}.
	$$
	
	The fact that $\mB u=\mB \overline{u}$ and $\overline{u}\in \mathcal{K}^{\perp}$ implies that $\beta\Vert\overline{u}\Vert_{L_\mathcal{J}^1(\mV)}\leq \Vert \mB \overline{u}\Vert_{L_\mathcal{J}^1(\mQ')}$, from which we obtain the following estimate for $\overline{u}$:
	\begin{equation*}
		\label{teo8-eq-estimate7}
		\Vert \overline{u}\Vert_{L_\mathcal{J}^1(\mV)}\leq C_7\Vert g_0\Vert_{L_\mathcal{J}^1(\mQ')},
	\end{equation*}
	where
	\begin{equation*}
		\label{teo8-eq-estimate-constant}
		C_7:=\frac{(1+ TC_{k_3}e^{C_{k_3}T})}{\beta}\left[\frac{\Vert c\Vert^{1/2}}{\sqrt{2}}(1+C_6)(1+C_{k_4}T) + 1 \right].
	\end{equation*}
	Consequently, the estimate for $u_0$ is easily obtained from \eqref{teo-goal-005} as
	\begin{equation*}
		\label{teo8-eq-estimate8}
		\Vert u_0\Vert_{L_\mathcal{J}^1(\mV)}\leq C_8\Vert g_0\Vert_{L_\mathcal{J}^1(\mQ')},
	\end{equation*}
	where $C_8:=C_7(\Vert a \Vert/\alpha_0)^{1/2}$.
	
	To obtain the rest of the constants it is enough to repeat the same arguments used but in the $f=0$ case. In this sense, the resulting constants will be similar to those obtained previously by exchanging $\Vert a\Vert$ for $\Vert c\Vert$ and $\gamma_0$ for $\alpha_0$. With respect to the kernels, it is enough to exchange $k_3$ for $k_2$ and $k_4$ for $k_1$. In fact, the remaining constants are:
	$$
	\begin{aligned}
	&C_9:= \Vert c \Vert /\beta^2, \quad C_{10}:= C_9\frac{\Vert a\Vert^{1/2}}{\alpha_0^{1/2}}\bigg\{1 +C_{k_1}T\bigg[1+e^{C_{k_1}T}(1+C_{k_1}T)\bigg] \bigg\},\\
	&C_{11}:=\frac{1}{\beta}\left(1+Te^{C_{k_2}T}\right)\left[\frac{\mu+\beta}{\beta}+ C_{k_1}T\Vert c\Vert (C_9+C_{10}) \right], \quad C_{12}:=\Vert c \Vert^{1/2}/\gamma_0^{1/2},\\
	&C_{13}:=\left(1+T\chi e^{T\chi }\right)\left[\frac{1}{\alpha_0}\left(\frac{\Vert a\Vert^{1/2}\Vert c\Vert}{\beta^2}+\frac{1}{2\Vert a\Vert^{1/2}} \right)+\frac{TC_{k_1}e^{\displaystyle\frac{C_{k_1}\Vert a\Vert^{1/2}T}{\alpha_0^{1/2}}}}{2\alpha_0}\right],\\
	&C_{14}:=C_{u0}\bigg[\frac{1}{\alpha_0} + \frac{\Vert a\Vert^{1/2}}{\alpha_0^{1/2}}(1+ TC_{k_1})C_{13} \bigg],\quad C_{16}=C_{15}\Vert c \Vert^{1/2}/\gamma_0^{1/2},\\
	&C_{15}:=\frac{(1+ TC_{k_2}e^{C_{k_2}T})}{\beta}\left[\frac{\Vert a\Vert^{1/2}}{\sqrt{2}}(1+C_{14})(1+C_{k_1}T) + 1 \right].
	\end{aligned}
	$$
	where now $\chi=C_{k_1}\left[1+ C_{u_0} \right]$, with 
		$$
	C_{u_0}:=1+\frac{TC_{k_1}\Vert a\Vert^{1/2}e^{\displaystyle\frac{C_{k_1}\Vert a\Vert^{1/2}T}{\alpha_0^{1/2}}}}{\alpha_0^{1/2}}.
	$$
\end{proof}

\subsection{Parameter-dependent problem}

In this section we will obtain stability energy-type estimates of the following problem:
\begin{problem}	Given $f\in L_\mathcal{J}^1(\mV')$ and $g\in L_\mathcal{J}^1(\mQ')$, find $(u,p)\in L_\mathcal{J}^1(\mV\times\mQ)$ such that
	\label{prob2-weak}
	\begin{equation*}
		\label{prob2-weak-formulation}
		\left\{
		\begin{aligned}
			&a(u,v)+b(v,p)=\langle f,v\rangle_{\mV}+\int_{0}^t\bigg[k_1(t,s) a(u(s),v)+k_2(t,s)b(v,p(s))\bigg]\,ds,\\
			&b(u,q) -\lambda (p,q)_{\mQ}=\langle g,q\rangle_{\mQ},
		\end{aligned}
		\right.
	\end{equation*}
	for all $(v,q)\in\mV\times\mQ.$
\end{problem}

Note that this problem is a particular, but very important case of the previously studied model. Indeed, assuming that the bilinear form $c(\cdot,\cdot)$ is given by 
\begin{equation}
	\label{bilinear_form_inner_product_Q}
	c(p,q)=\lambda(p,q)_\mQ, \,\,\,\lambda> 0,
\end{equation}
where $(\cdot,\cdot)_\mQ$ denotes the inner product in $\mQ$, and taking $k_3,k_4\equiv 0$ in Problem \ref{goal-prob-regular}, we arrive at Problem \ref{prob2-weak}. It is important to observe that this modifications allow to obtain estimates directly from Theorem \ref{teo8} at the expenses of having $\lambda$ in the denominator of several terms. 

Let us define a regular perturbation of the form
\begin{equation*}
	c(\cdot,\cdot)=\lambda\widetilde{c}(\cdot,\cdot),
\end{equation*}
where $\widetilde{c}:\mQ\times\mQ\rightarrow\mathbb{R}$ is a continuous $\mH-$elliptic bilinear form with ellipticity constant $\widetilde{\gamma}_{0}$ and continuity constant $\Vert \widetilde{c}\Vert$. Then, $\gamma_{0}=\lambda\widetilde{\gamma}_0$ and $\Vert c\Vert=\lambda\Vert \widetilde{c}\Vert$ can be seen as the ellipticity and continuity constants of $c(\cdot,\cdot)$, respectively. Hence, from Theorem \ref{teo8} we obtain that the bounding constants are dependent on $\lambda^{-1}$, which clearly is an important drawback since this parameter deteriorates the constants in the estimates. Therefore, the analysis of this model will take a different path from that of the problem with regular perturbation, in order to obtain uniform bounds with respect to $\lambda$.

Let us introduce the following problem:

\begin{problem}\label{prob2}
	Given $f\in L_\mathcal{J}^1(\mV')$ and $g\in L_\mathcal{J}^1(\mQ')$, find $(u,p)\in L_\mathcal{J}^1(\mV\times\mQ)$ such that
	\begin{equation*}
		\label{goal-mixed-problem1}
		\left\{\begin{aligned}
			\mA u(t) +\mB^*p(t) &=f(t) + \int_{0}^{t}\bigg[k_1(t,s)\mA u(s)+ k_2(t,s) \mB^*p(s)\bigg]\,ds,\\
			\mB u(t) -\lambda\mathfrak{R}_{\mQ} p(t)&= g(t).
		\end{aligned}\right.
	\end{equation*}
\end{problem}
Here, $\mathfrak{R}_{\mQ}:\mQ\rightarrow\mQ'$ is the Riesz operator. 

For the following result, we will assume that $\mB$ is surjective.

\begin{teo}[Main Theorem 2]
	\label{teo-p_lambda}
	Together with Assumption \ref{assumption-1}, assume further that $\Ima\mB=\mQ'$, i.e., there exists $\beta>0$ such that
	$$
	\displaystyle\sup_{v\in \mV}\frac{b(v,q)}{\Vert v\Vert_\mV}\geq \beta \Vert q\Vert_\mQ,\;\;\forall q\in \mQ.
	$$
	Moreover, assume that $c(\cdot,\cdot)$ is given by \eqref{bilinear_form_inner_product_Q} with $\lambda>0$. Then, for every $f\in\mV'$ and for every $g\in\mQ'$, Problem \ref{prob2} has a unique solution. Moreover, there exist positive constants $C_i, i=1,2,3,4$, all uniform with respect to $\lambda$, such that
	\begin{align}
		\nonumber&\Vert u \Vert_{L_\mathcal{J}^1(\mV)}\leq C_1\Vert f\Vert_{L_\mathcal{J}^1(\mV')} + C_2\Vert g\Vert_{L_\mathcal{J}^1(\mQ')},\\
		\nonumber&\Vert p \Vert_{L_\mathcal{J}^1(\mQ)}\leq C_3\Vert f\Vert_{L_\mathcal{J}^1(\mV')} + C_4\Vert g\Vert_{L_\mathcal{J}^1(\mQ')}.
	\end{align}
\end{teo}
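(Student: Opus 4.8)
The plan is to combine three ingredients: well-posedness of the perturbed saddle-point operator, a set of pointwise-in-time a priori bounds that keep $\lambda$ out of every denominator, and a final Gronwall argument to absorb the Volterra terms. I would first record the structural simplification coming from the surjectivity of $\mB$: since $\Ima\mB=\mQ'$, Corollary \ref{cor-1} gives that $\mB^*$ is injective, hence $\mH=\ker\mB^*=\{0\}$. Consequently $\mQ=\mH^{\perp}$, so that $p=\overline{p}$ and $g=\overline{g}$, and there is no $p_0$ component to estimate. For well-posedness I would note that, for each fixed $t$, the operator $\left[\begin{smallmatrix}\mA & \mB^*\\ \mB & -\lambda\mathfrak{R}_{\mQ}\end{smallmatrix}\right]\in\mathcal{L}(\mV\times\mQ;\mV'\times\mQ')$ is an isomorphism: this is the classical perturbed (penalized) saddle-point setting, in which $a(\cdot,\cdot)$ is coercive on $\mK$, the form $\lambda(\cdot,\cdot)_{\mQ}$ is $\mQ$-coercive, and $\mB$ satisfies the inf-sup condition. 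With the principal operator invertible, existence and uniqueness of $(u,p)\in L_\mathcal{J}^1(\mV\times\mQ)$ follow exactly as in Theorem \ref{teo8}, by invoking the resolvent theory for Volterra equations of the second kind in \cite[Chapter 2, Section 3]{gripenberg1990volterra}.

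The heart of the matter is the stability estimate, and the single guiding principle is to \emph{never} divide by the coercivity constant $\gamma_0=\lambda\widetilde{\gamma}_0$ of $c(\cdot,\cdot)$, since that is precisely what produces the $\lambda^{-1}$ blow-up in a naive application of Theorem \ref{teo8}. I would derive three pointwise (in time) inequalities. First, to control the whole of $p$ I use the inf-sup condition on the first row: from $\mB^*p=f-\mA u+\int_0^t[k_1(t,s)\mA u(s)+k_2(t,s)\mB^*p(s)]\,ds$ and $\beta\Vert p\Vert_{\mQ}\leq\Vert\mB^*p\Vert_{\mV'}$, together with the continuity bounds of Assumption \ref{assumption-1} and the boundedness of the kernels, one gets
\begin{equation*}
	\beta\Vert p\Vert_{\mQ}\leq\Vert f\Vert_{\mV'}+\Vert a\Vert\,\Vert u\Vert_{\mV}+C_{k_1}\Vert a\Vert\int_0^t\Vert u(s)\Vert_{\mV}\,ds+C_{k_2}\Vert b\Vert\int_0^t\Vert p(s)\Vert_{\mQ}\,ds,
\end{equation*}
which is entirely free of $\lambda$. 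Second, testing the first equation with $u_0\in\mK$, so that the $b$-terms vanish, and using the coercivity of $a$ on $\mK$ gives
\begin{equation*}
	\alpha_0\Vert u_0\Vert_{\mV}\leq\Vert f\Vert_{\mV'}+\Vert a\Vert\,\Vert\overline{u}\Vert_{\mV}+C_{k_1}\Vert a\Vert\int_0^t\Vert u(s)\Vert_{\mV}\,ds,
\end{equation*}
again without $\lambda$. Third, the inf-sup condition applied to the second equation, where $b(\overline{u},q)=b(u,q)=\lambda(p,q)_{\mQ}+\langle g,q\rangle_{\mQ}$, yields $\beta\Vert\overline{u}\Vert_{\mV}\leq\lambda\Vert p\Vert_{\mQ}+\Vert g\Vert_{\mQ'}$, which is the only place where $\lambda$ survives and where it multiplies the already-controlled quantity $\Vert p\Vert_{\mQ}$.

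With these in hand I would close the loop. Writing $\Vert u\Vert_{\mV}\leq\Vert u_0\Vert_{\mV}+\Vert\overline{u}\Vert_{\mV}$ and substituting first the bound for $u_0$ and then the bound for $p$ into the estimate for $\overline{u}$, the factor multiplying $\Vert\overline{u}\Vert_{\mV}$ on the right-hand side is proportional to $\lambda$; since it vanishes as $\lambda\to0$, it can be absorbed into the left-hand side, leaving a single scalar inequality of Volterra type $\Phi(t)\leq C_0\big(\Vert f\Vert_{\mV'}+\Vert g\Vert_{\mQ'}\big)+C_1\int_0^t\Phi(s)\,ds$ for $\Phi:=\Vert u\Vert_{\mV}+\Vert p\Vert_{\mQ}$, with $C_0,C_1$ depending only on $\alpha_0,\beta,\Vert a\Vert,\Vert b\Vert,C_{k_1},C_{k_2}$. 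Gronwall's lemma then gives a pointwise bound, and integrating over $\mathcal{J}=[0,T]$ produces the stated estimates with constants $C_1,\dots,C_4$ independent of $\lambda$. I expect the main obstacle to be exactly this decoupling step: the circular dependence $\overline{u}\leftarrow\lambda\Vert p\Vert_{\mQ}\leftarrow\Vert u\Vert_{\mV}\leftarrow\overline{u}$ must be resolved so that the resulting constants remain bounded in the relevant (locking) regime, and the key that makes it work is that $\Vert p\Vert_{\mQ}$ is estimated through the first equation and the inf-sup constant $\beta$, rather than through $\gamma_0$, thereby keeping $\lambda$ out of every denominator.
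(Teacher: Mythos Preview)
Your argument is correct in the regime that matters and is genuinely different from the paper's route. The paper does not treat $f$ and $g$ together: it splits into the case $g=0$ and the case $f=0$. For $g=0$ it introduces the lifting $\widetilde{u}:=\mL_{\mB}(\lambda\mathfrak{R}_{\mQ}p)$, tests the first equation with $\widetilde{u}$ and with $u_0$, and works directly with the quantity $\lambda^{-1}\Vert\mB u\Vert_{\mQ'}$, applying Gronwall's lemma twice (once to bound $\int_0^t\Vert u_0(s)\Vert_{\mV}\,ds$, once to close on $\Vert\mB u\Vert_{\mQ'}$) before reading off $\Vert p\Vert_{\mQ}=\lambda^{-1}\Vert\mB u\Vert_{\mQ'}$; no absorption is ever needed, so the resulting constants are valid for every $\lambda>0$ (they contain $\lambda$ only in numerators and in the Gronwall exponent). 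For $f=0$ the paper first uses Gronwall on the first row to get $\mA u+\mB^*p=0$ pointwise, which reduces that case to the static penalized saddle-point estimate of \cite[Theorem~4.3.2]{boffi2013mixed}. Your three pointwise inequalities and single Gronwall are considerably shorter and handle both source terms at once; the price is the absorption step, which requires $\lambda\Vert a\Vert(1+\Vert a\Vert/\alpha_0)/\beta^2<1$. That is harmless for the locking regime $\lambda\to0$ the theorem is designed for, and for $\lambda$ bounded away from zero one can simply invoke Theorem~\ref{teo8} with $\gamma_0=\lambda$; but as written your proof does not literally cover all $\lambda>0$, so you should add one sentence making this two-regime split explicit.
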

\begin{proof}
	We divide the proof in two cases. The first one corresponds to the case when $(u,p)\in\mV\times\mQ$ solves  the problem
	\begin{equation}
		\label{teo-p_lambda-pv2}\left\{
		\begin{aligned}
			a(u,v) + b(v,p) &=\langle f,v\rangle_{\mV} + \int_{0}^{t}\bigg[k_1(t,s)a(u(s),v) +k_2(t,s)b(v,p(s)) \bigg]\,ds,\\
			b(u,q) -\lambda(p,q)_{\mQ} &=0,
		\end{aligned}
		\right.
	\end{equation}
	for all $(v,q)\in\mV\times\mQ$,	or equivalently,
	\begin{equation}
		\label{teo-p_lambda-po2}\left\{
		\begin{aligned}
			\mA u + \mB^* p&=f + \int_{0}^{t}\bigg[k_1(t,s)\mA u(s) + k_2(t,s)\mB^* p(s) \bigg]\,ds,\\
			\mB u-\lambda\mathfrak{R}_\mQ p&=0,
		\end{aligned}
		\right.
	\end{equation}
	a.e. in $\mathcal{J}$. By using the lifting operator $\mL_{\mB}$ we set $\widetilde{u}:=\mL_{\mB}\lambda\mathfrak{R}_\mQ p$, 
	so that 
	$\mB u=\mB\widetilde{u}=\lambda\mathfrak{R}_\mQ p.$
	Then, defining $u_0:=u-\widetilde{u}$, we have that $u_0\in\mK$. Testing with $v=\widetilde{u}(t)$ in the first equation of \eqref{teo-p_lambda-pv2} and setting $p=\mathfrak{R}_\mQ^{-1}\lambda^{-1}\mB u$, we obtain
	$$
	\begin{aligned}
		&a(u,\widetilde{u})+b\big(\widetilde{u},\mathfrak{R}_\mQ^{-1}\lambda^{-1}\mB u\big)=\langle f,\widetilde{u}\rangle_{\mV}\\
		&\hspace{3cm}+ \int_{0}^{t}\bigg[k_1(t,s)a\big(u(s),\widetilde{u}\big) + k_2(t,s)b\big(\widetilde{u},\mathfrak{R}_\mQ^{-1}\lambda^{-1}\mB u(s)\big) \bigg]\,ds.
	\end{aligned}
	$$
	Since $\mB \widetilde{u}=\mB u$, from de above it follows that
	\begin{align}
		\nonumber\lambda^{-1}\Vert &\mB u\Vert_{\mQ'}^2=\langle f,\widetilde{u}\rangle_{\mV}- a(u,\widetilde{u})\\
		&+ \int_{0}^{t}\bigg[k_1(t,s)a\big(u(s),\widetilde{u}\big) + k_2(t,s)\lambda^{-1}\langle \mB u(s),\mathfrak{R}_\mQ^{-1}\mB u\rangle_{\mQ} \bigg]ds.\label{teo-p-eq22}
	\end{align}
	Now we will estimate $-a(u,\widetilde{u})$. With this aim, first we observe that  \eqref{seminorms}, \eqref{continuity-seminorms-bilinear}, and the splitting $u=\widetilde{u}+u_0$ yields to
	\begin{equation}
		\label{teo-p-eq23}
		-a(u,\widetilde{u})=a(\widetilde{u}+u_0,\widetilde{u})=-a(\widetilde{u},\widetilde{u})-a(u_0,\widetilde{u})\leq -\vert \widetilde{u}\vert_a^2+\vert \widetilde{u}\vert_a\vert u_0\vert_a.
	\end{equation}
	On the other hand, testing the first equation in \eqref{teo-p_lambda-pv2} with $v=u_0(t)$ gives
	\begin{equation*}
		\label{teo-p-eq24}
		a(u,u_0)=\langle f,u_0\rangle_{\mV} + \int_{0}^{t}k_1(t,s)a(u_0(s),u_0)ds,
	\end{equation*} 
	and then, from \eqref{seminorms} we have that
	\begin{equation}
		\label{teo-p-eq25}
		\begin{aligned}
			\vert u_0\vert_a^2=a(u_0,u_0)&=a(u,u_0)-a(\widetilde{u},u_0)\\
			&\leq \Vert f\Vert_{\mV'}\Vert u_0\Vert_{\mV} + \vert u_0\vert_a\vert \widetilde{u}\vert_a + C_{k_1}\int_{0}^{t}\vert u(s)\vert_ads\vert u_0\vert_a.
		\end{aligned}
	\end{equation} 
	Notice that the ellipticity in the kernel of $\mB$ implies that $\alpha_0\Vert u_0\Vert_{\mV}^2\leq \vert u_0\vert_a^2$. Then, \eqref{teo-p-eq25} is reduced to
	\begin{equation}
		\label{teo-p-eq26}
		\begin{aligned}
			\vert u_0\vert_a\leq\frac{1}{\alpha_0^{1/2}}\Vert f\Vert_{\mV'} + \vert \widetilde{u}\vert_a + C_{k_1}\int_{0}^{t}\vert u(s)\vert_a ds.
		\end{aligned}
	\end{equation}
	Inserting \eqref{teo-p-eq26} in \eqref{teo-p-eq23} yields to 
	\begin{equation*}
		\label{teo-p-eq27}
		-a(u,\widetilde{u})\leq \frac{1}{\alpha_0^{1/2}}\Vert f\Vert_{\mV'}\vert\widetilde{u}\vert_a + C_{k_1}\vert\widetilde{u}\vert_a\int_{0}^{t}\vert u(s)\vert_a ds.
	\end{equation*} 
	Replacing this inequality in \eqref{teo-p-eq22} we obtain
	\begin{equation}
		\label{teo-p-eq28}
		\begin{aligned}
			\lambda^{-1}\Vert &\mB u\Vert_{\mQ'}^2\leq\Vert f\Vert_{\mV'}\Vert \widetilde{u}\Vert_{\mV} + \frac{\Vert a\Vert^{1/2}}{\alpha_0^{1/2}}\Vert f\Vert_{\mV'}\Vert \widetilde{u}\Vert_{\mV} + C_{k_1}\Vert a\Vert\Vert \widetilde{u}\Vert_{\mV}\int_{0}^{t}\Vert u(s)\Vert_{\mV}ds\\
			&\hspace{0.5cm}+C_{k_1}\Vert a \Vert\int_{0}^{t}\Vert u(s)\Vert_{\mV}ds\Vert \widetilde{u}\Vert_{\mV} +C_{k_2}\Vert \mB u\Vert_{\mQ'}\int_{0}^{t}\lambda^{-1}\Vert \mB u(s)\Vert_{\mQ'}ds\\
			&\leq \bigg(1+\frac{\Vert a\Vert^{1/2}}{\alpha_0^{1/2}}\bigg)\Vert f\Vert_{\mV'}\Vert \widetilde{u}\Vert_{\mV} \\
			&\hspace{0.5cm}+ 2C_{k_1}\Vert a\Vert\int_{0}^{t}\Vert u(s)\Vert_{\mV}ds\Vert \widetilde{u}\Vert_{\mV}+C_{k_2}\int_{0}^{t}\lambda^{-1}\Vert \mB u(s)\Vert_{\mQ'}ds\Vert \mB u\Vert_{\mQ'}.
		\end{aligned}
	\end{equation} 
	From the inf-sup condition of $\mathbb{B}$, we have that $\beta\Vert\widetilde{u}\Vert_{\mV}\leq\Vert\mB \widetilde{u}\Vert_{\mQ'}=\Vert \mB u\Vert_{\mQ'}$, so the inequality \eqref{teo-p-eq28} becomes
	\begin{equation}
		\label{teo-p-eq29}
		\begin{aligned}
			\lambda^{-1}&\Vert \mB u\Vert_{\mQ'}
			\leq\frac{1}{\beta}\bigg(1+\frac{\Vert a\Vert^{1/2}}{\alpha_0^{1/2}}\bigg)\Vert f\Vert_{\mV'} \\
			&\hspace{0.45cm}+ \frac{2C_{k_1}\Vert a\Vert}{\beta}\int_{0}^{t}\bigg[\Vert \widetilde{u}(s)\Vert_{\mV}+\Vert u_0(s)\Vert_{\mV}\bigg]ds+C_{k_2}\int_{0}^{t}\lambda^{-1}\Vert \mB u(s)\Vert_{\mQ'}\,ds\\
			&\leq\frac{1}{\beta}\bigg(1+\frac{\Vert a\Vert^{1/2}}{\alpha_0^{1/2}}\bigg)\Vert f\Vert_{\mV'}\\
			&\hspace{0.45cm}+ \frac{2C_{k_1}\Vert a\Vert}{\beta}\int_{0}^{t}\Vert u_0(s)\Vert_{\mV}\,ds + \bigg(\frac{2\lambda C_{k_1}\Vert a\Vert}{\beta^2}+C_{k_2}\bigg)\int_{0}^{t}\lambda^{-1}\Vert \mB u(s)\Vert_{\mQ'}\,ds.
		\end{aligned}
	\end{equation}
	
	Now we will estimate $\int_{0}^{t}\Vert u_0(s)\Vert_{\mV}\,ds$. We begin by noticing that the continuity and the  $\mK$-ellipticity of $a(\cdot,\cdot)$ can be used in \eqref{teo-p-eq26}, along with \eqref{continuity-seminorms}, and the split $u=\widetilde{u}+u_0$, in order  to obtain
	\begin{equation*}
		\label{teo-p-eq30}
		\begin{aligned}
			\Vert u_0\Vert_{\mV}&\leq \frac{1}{\alpha_0}\Vert f\Vert_{\mV'} + \frac{\Vert a\Vert^{1/2}}{\alpha_0^{1/2}}\Vert \widetilde{u}\Vert_{\mV} \\
			&\hspace{1cm}+ \frac{C_{k_1}\Vert a \Vert^{1/2}}{\alpha_0^{1/2}}\int_{0}^{t}\Vert \widetilde{u}(s)\Vert_{\mV} ds + \frac{C_{k_1}\Vert a \Vert^{1/2}}{\alpha_0^{1/2}}\int_{0}^{t}\Vert u_0(s)\Vert_{\mV}\, ds .
		\end{aligned}
	\end{equation*} 
	From Gronwall's lemma we have that
	\begin{equation}
		\label{teo-p-eq33}
		\Vert u_0\Vert_{\mV}\leq \widetilde{m}(t)+\bigg(1 +\frac{TC_{k_1}\Vert a \Vert^{1/2}}{\alpha_0^{1/2}}e^{\displaystyle\frac{TC_{k_1}\Vert a \Vert^{1/2}}{\alpha_0^{1/2}}} \bigg)\int_{0}^{t}\widetilde{m}(s)\,ds,
	\end{equation}
	where
	\begin{equation*}
		\widetilde{m}(t):=\frac{1}{\alpha_0}\Vert f\Vert_{\mV'} + \frac{\Vert a\Vert^{1/2}}{\alpha_0^{1/2}}\Vert \widetilde{u}\Vert_{\mV} + \frac{C_{k_1}\Vert a \Vert^{1/2}}{\alpha_0^{1/2}}\int_{0}^{t}\Vert \widetilde{u}(s)\Vert_{\mV}\, ds.
	\end{equation*}
	Using the inf-sup condition of $\mB$ and integrating \eqref{teo-p-eq33} over $[0,t]$, we obtain
	\begin{equation}
		\label{teo-p-eq35}
		\int_{0}^{t}\Vert u_0(s)\Vert_{\mV}ds\leq M_1\int_{0}^{t}\Vert f(s)\Vert_{\mV'} + M_2\int_{0}^{t}\Vert \mB u(s)\Vert_{\mQ'}\,ds,
	\end{equation}
	where
	\begin{equation*}
		\label{teo-p-constant-M1}
		M_1:=\frac{1}{\alpha_0}\bigg(1 +\frac{TC_{k_1}\Vert a \Vert^{1/2}}{\alpha_0^{1/2}}e^{\displaystyle\frac{TC_{k_1}\Vert a \Vert^{1/2}}{\alpha_0^{1/2}}} \bigg),
	\end{equation*}
	and
	$$
	\qquad M_2:=\frac{\Vert a\Vert^{1/2}}{\beta\alpha_0^{1/2}}\bigg(1 +\frac{TC_{k_1}\Vert a \Vert^{1/2}}{\alpha_0^{1/2}}e^{\displaystyle\frac{TC_{k_1}\Vert a \Vert^{1/2}}{\alpha_0^{1/2}}} \bigg)\bigg(1+TC_{k_1} \bigg).
	$$
	Replacing \eqref{teo-p-eq35} in \eqref{teo-p-eq29} and rearranging terms, we obtain
	\begin{equation}
		\label{teo-p-eq37}
		\Vert \mB u\Vert_{\mQ'}\leq \lambda n(t)+N\int_{0}^{t}\Vert\mB u(s)\Vert_{\mQ'}ds,
	\end{equation}
	where
	\begin{equation*}
		n(t):=\frac{1}{\beta}\bigg[\bigg(1+\frac{\Vert a\Vert^{1/2}}{\alpha_0^{1/2}}\bigg)\Vert f\Vert_{\mV'}+ 2C_{k_1}M_1\Vert a\Vert\int_{0}^{t}\Vert f(s)\Vert_{\mV'}\,ds\bigg],
	\end{equation*}
	and
	\begin{equation*}
		N:= \frac{2\lambda C_{k_1}M_2\Vert a\Vert}{\beta}+\frac{2\lambda C_{k_1}\Vert a\Vert}{\beta^2}+C_{k_2}.
	\end{equation*}
	Thus, we apply the Gronwall's inequality to \eqref{teo-p-eq37} to obtain
	\begin{equation*}
		\label{teo-p-eq38}
		\Vert \mB u\Vert_{\mQ'}\leq \lambda n(t)+\lambda Ne^{TN}\int_{0}^{t}n(s)\,ds,
	\end{equation*}
	and, integrating the previous expression in $\mathcal{J}$, yields to
	\begin{equation}
		\label{teo-p-eq39}
		\lambda^{-1}\Vert \mB u\Vert_{L_\mathcal{J}^1(\mQ')}\leq (1+TNe^{TN})\int_{0}^{t}n(s)\,ds.
	\end{equation}

	Then, the fact that $\beta\Vert \widetilde{u}\Vert_{\mV}\leq\Vert \mB \widetilde{u}\Vert_{\mQ'}=\Vert \mB u\Vert_{\mQ'}$, allows us to obtain the following estimate for $\widetilde{u}$
	\begin{equation*}
		\Vert \widetilde{u}\Vert_{L_\mathcal{J}^1(\mV)}\leq C_{u_1}\Vert f\Vert_{L_\mathcal{J}^1(\mV')},
	\end{equation*}
	where
	$$
	C_{u_1}:=\frac{\lambda}{\beta^2}\bigg[\bigg(1+\frac{\Vert a\Vert^{1/2}}{\alpha_0^{1/2}}\bigg) + 2TC_kM_1\Vert a \Vert\bigg]\bigg(1+TNe^{TN}\bigg).
	$$
	
	Now we proceed to estimate $u_0$. To accomplish this task,  we set $t=T$ in \eqref{teo-p-eq33} and, using the previous bound for $\widetilde{u}$, we obtain the following estimate
	$$
	\begin{aligned}
		\Vert u_0\Vert_{L_\mathcal{J}^1(\mV)}\leq M_1\Vert f\Vert_{L_\mathcal{J}^1(\mV')} + \beta M_2\Vert \widetilde{u}\Vert_{L_\mathcal{J}^1(\mV)}\leq C_{u_2} \Vert f\Vert_{L_\mathcal{J}^1(\mV')},
	\end{aligned}
	$$
	where $	C_{u_2}:=M_1+ \beta M_2 C_{u_1}.$
	
	Recalling that $u_{0}=u-\widetilde{u}$, the triangle inequality and the previous bound yields to
	\begin{equation*}
		\label{teo-p-estimate-u2}
		\Vert u\Vert_{L_\mathcal{J}^1(\mV)}\leq\Vert u_0\Vert_{L_\mathcal{J}^1(\mV)}+\Vert \widetilde{u}\Vert_{L_\mathcal{J}^1(\mV)}\leq C_1\Vert f\Vert_{L_\mathcal{J}^1(\mV')},
	\end{equation*}
	where $C_1:=C_{u_1}+C_{u_2}$. From the second equation in \eqref{teo-p_lambda-po2}, together with \eqref{teo-p-eq39}, we derive the following estimate for $p$,
	\begin{equation*}
		\label{teo-p-estimate-p2}
		\Vert p\Vert_{L_\mathcal{J}^1(\mQ)}\leq\Vert\lambda^{-1}\mB u\Vert_{L_\mathcal{J}^1(\mQ')}\leq C_3\Vert f\Vert_{L_\mathcal{J}^1(\mV')},
	\end{equation*}
	where
	\begin{equation*}
		\label{teo-p-constant-c3}
		C_3:= \frac{1}{\beta}\bigg[\bigg(1+\frac{\Vert a\Vert^{1/2}}{\alpha_0^{1/2}}\bigg) + 2TC_kM_1\Vert a \Vert\bigg]\bigg(1+TNe^{TN}\bigg).
	\end{equation*}
	
	Now, we consider the second case and assume that $u,p$ and $g$ satisfy
	\begin{equation*}
		\label{teo-p_lambda-pv}
		\left\{
		\begin{aligned}
			&a(u,v)+b(v,p)=\int_{0}^t\bigg[ k_1(t,s)a(u(s),v)+k_2b(v,p(s))\bigg]\,ds,\\
			&b(u,q) -\lambda(p,q)_\mQ=\langle g,q\rangle_\mQ,
		\end{aligned}
		\right.
	\end{equation*}
	for all $(v,q)\in\mV\times\mQ$, which in operator form reads a.e in $\mathcal{J}$ as follows
	\begin{equation}
		\label{teo-p_lambda-po}
		\left\{
		\begin{aligned}
			&\mA u+\mB^*p=\int_{0}^t\bigg[k_1(t,s)\mA u(s)+k_2(t,s)\mB^*p(s)\bigg]\,ds,\\
			&\mB u -\lambda \mathfrak{R}_\mQ p=g.
		\end{aligned}
		\right.
	\end{equation}
	
	Analogous to the proof of Theorem \ref{teo8}, we take the $\mV'$-norm in the first equation of \eqref{teo-p_lambda-po} and apply Gronwall's lemma to obtain the equalities \eqref{teo_8-001} and \eqref{teo_8-002}. From this, we arrive to an usual mixed formulation. Hence, we resort to \cite[Theorem 4.3.2]{boffi2013mixed} in order to obtain the remaining bounds:
	$$
	\begin{aligned}
		\Vert u\Vert_{L_\mathcal{J}^1(\mV)}\leq C_2\Vert g\Vert_{L_\mathcal{J}^1(\mQ')} \quad\text{and}\quad\Vert p\Vert_{L_\mathcal{J}^1(\mQ)}\leq C_4\Vert g\Vert_{L_\mathcal{J}^1(\mQ')},
	\end{aligned}
	$$
	where
	$$
	C_2:=\frac{\beta^2+4\lambda\Vert a\Vert}{\alpha_0\beta^2} \quad\text{and}\quad C_4:= \frac{4\Vert a\Vert}{\Vert a\Vert\lambda + 2\beta^2}.
	$$
	This concludes the proof. 
\end{proof}

For completeness, below we provide a result that considers the operator $\mB$ as closed, but not surjective. The proof is a straightforward application of Theorem \ref{teo-p_lambda}.
\begin{cor}
	Together with Assumption \ref{assumption-1}, assume that $\Ima\mB$ is closed and $c(\cdot,\cdot)$ is given by \eqref{bilinear_form_inner_product_Q} with $\lambda>0$. Set $g=\overline{g}+g_0$ and set $p=\overline{p}+p_0$. Then, for every $f\in L_{\mathcal{J}}^1(\mV')$ and every $g\in L_{\mathcal{J}}^1(\mQ')$, Problem \ref{prob2-weak} has a unique solution. Moreover, there exist $C_i>0,\, i=1,2,3,4$, uniform in $\lambda$, such that
	
	\begin{align}
		\nonumber&\Vert u \Vert_{L_\mathcal{J}^1(\mV)}+\Vert \overline{p} \Vert_{L_\mathcal{J}^1(\mQ)}\leq (C_1+C_3)\Vert f\Vert_{L_\mathcal{J}^1(\mV')} + (C_2+C_4)\Vert g\Vert_{L_\mathcal{J}^1(\mQ')},
\end{align}
	and
	$$
	\Vert p_0\Vert_{L_\mathcal{J}^1(\mQ)}\leq\frac{1}{\lambda}\Vert g_0\Vert_{L_\mathcal{J}^1(\mQ')}.
	$$
\end{cor}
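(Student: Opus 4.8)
The plan is to exploit the orthogonal decomposition $\mQ=\mH\oplus\mH^{\perp}$ to split the second equation of Problem \ref{prob2-weak} into two pieces that decouple, solve the $\mH$-component explicitly, and recognize the $\mH^{\perp}$-component as an instance of the surjective problem already settled in Theorem \ref{teo-p_lambda}.

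First I would test the second equation of Problem \ref{prob2-weak} with an arbitrary $q_0\in\mH=\ker\mB^{*}$. Since $b(u,q_0)=\langle\mB u,q_0\rangle_{\mQ}=\langle u,\mB^{*}q_0\rangle_{\mV}=0$, while $(p,q_0)_{\mQ}=(p_0,q_0)_{\mQ}$ by orthogonality and $\langle g,q_0\rangle_{\mQ}=\langle g_0,q_0\rangle_{\mQ}$ by \eqref{functions-f-g-K-H-2}, the equation collapses to
\[
-\lambda(p_0,q_0)_{\mQ}=\langle g_0,q_0\rangle_{\mQ},\qquad\forall q_0\in\mH .
\]
This determines $p_0$ uniquely (namely $\lambda p_0=-\mathfrak{R}_{\mQ}^{-1}g_0$ on $\mH$); choosing $q_0=p_0$ gives $\lambda\|p_0\|_{\mQ}^2\leq\|g_0\|_{\mQ'}\|p_0\|_{\mQ}$, hence $\lambda\|p_0\|_{\mQ}\leq\|g_0\|_{\mQ'}$ a.e. in $\mathcal{J}$, and integrating over $\mathcal{J}$ yields the second estimate $\|p_0\|_{L_\mathcal{J}^1(\mQ)}\leq\lambda^{-1}\|g_0\|_{L_\mathcal{J}^1(\mQ')}$.

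Next I would restrict the test function in the second equation to $q\in\mH^{\perp}$ and use the decoupling: in the first equation $b(v,p)=b(v,\overline{p})$ and $b(v,p_0)=0$ because $p_0\in\ker\mB^{*}$, while for $q\in\mH^{\perp}$ one has $(p,q)_{\mQ}=(\overline{p},q)_{\mQ}$ and $\langle g,q\rangle_{\mQ}=\langle\overline{g},q\rangle_{\mQ}$. Thus the pair $(u,\overline{p})\in L_\mathcal{J}^1(\mV\times\mH^{\perp})$ solves exactly the problem of Theorem \ref{teo-p_lambda} posed on $\mV\times\mH^{\perp}$. The hypothesis that $\Ima\mB$ is closed, together with the Banach Closed Range Theorem \ref{teo-1}, furnishes the inf-sup condition $\sup_{v\in\mV}b(v,q)/\|v\|_{\mV}\geq\beta\|q\|_{\mQ}$ for all $q\in\mH^{\perp}$; and since $\Ima\mB$ equals the annihilator of $\mH=\ker\mB^{*}$, which under the Riesz identification is $(\mH^{\perp})'$, the operator $\mB$ regarded as a map into $(\mH^{\perp})'$ is surjective. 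Hence Theorem \ref{teo-p_lambda} applies on the reduced spaces and produces constants $C_1,\dots,C_4$, uniform in $\lambda$, with
\[
\|u\|_{L_\mathcal{J}^1(\mV)}\leq C_1\|f\|_{L_\mathcal{J}^1(\mV')}+C_2\|\overline{g}\|_{L_\mathcal{J}^1(\mQ')},\qquad
\|\overline{p}\|_{L_\mathcal{J}^1(\mQ)}\leq C_3\|f\|_{L_\mathcal{J}^1(\mV')}+C_4\|\overline{g}\|_{L_\mathcal{J}^1(\mQ')}.
\]

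Finally I would add these two bounds and use $\|\overline{g}\|_{L_\mathcal{J}^1(\mQ')}\leq\|g\|_{L_\mathcal{J}^1(\mQ')}$ to recover the first asserted estimate; uniqueness of the full solution follows from the uniqueness of $p_0$ in the first step and the uniqueness granted by Theorem \ref{teo-p_lambda} for $(u,\overline{p})$. The main obstacle I anticipate is not analytical but one of careful bookkeeping: one must check that the memory (Volterra) terms also respect the splitting, i.e. that $b(v,p(s))=b(v,\overline{p}(s))$ inside the integral so that the reduced first equation carries no $p_0$-dependence, and that the restriction of $(\cdot,\cdot)_{\mQ}$ to $\mH^{\perp}$ is again the Riesz inner product demanded by \eqref{bilinear_form_inner_product_Q}, so that Theorem \ref{teo-p_lambda} applies verbatim with the same $\beta$, $\alpha_0$ and kernel constants.
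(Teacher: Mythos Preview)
Your proposal is correct and follows exactly the route the paper intends: the paper itself gives no detailed proof, stating only that the result ``is a straightforward application of Theorem \ref{teo-p_lambda}'', and your splitting $p=p_0+\overline{p}$, $g=g_0+\overline{g}$ together with the observation that $\mB:\mV\to(\mH^{\perp})'$ becomes surjective (so that Theorem \ref{teo-p_lambda} applies on $\mV\times\mH^{\perp}$) is precisely how one unpacks that sentence. Your bookkeeping concerns about the Volterra term and the inner product on $\mH^{\perp}$ are well placed but harmless, since $b(v,p_0(s))=0$ for all $s$ and the restriction of $(\cdot,\cdot)_{\mQ}$ to the closed subspace $\mH^{\perp}$ is again its Hilbert inner product.
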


The importance of the previous result lies in the fact that all the constants involved in the continuous dependency are uniform with respect to the parameter $\lambda$. We remark that this fact is of great importance in the study of slender structures, like Timoshenko beams, Reissner-Mindlin plates, among others, since the thickness parameter is the one responsible of the so called \emph{locking phenomenon} in the design of numerical methods. For this reason, if $\lambda$ represents the thickness of some particular structure, Theorem \ref{teo-p_lambda} states that all the constants will be uniform with respect to it.


\subsection{Semi-discrete abstract analysis}
\label{subsection2-3}
The goal of the present section is to analyze the semi-discrete counterpart of the proposed mixed problems and obtain a priori error estimates. Here, we consider the necessary hypotheses for the existence and uniqueness of semi-discrete solutions such as ellipticity in the kernel and the discrete inf-sup condition (see for instance \cite{aminikhah2010new,saedpanah2016existence} for further details related to the existence of semi-discrete solutions of Volterra equations of the second kind), so this section will be focused on deriving error estimates coming from continuous abstract models, which are characterized by having constants that do not deteriorate as $\lambda$ becomes small.

To begin, we introduce the following assumption.

\begin{assunp}
	\label{assumption-discrete-regular}
	Assume that there exist two finite dimensional spaces $\mV_h$ and $\mQ_h$ such that $\mV_h\subset\mV$ and $\mQ_h\subset \mQ$. Together with the continuous space kernels $\mK$ and $\mH$, we consider the discrete counterparts
	\begin{equation*}
		\mK_h:=\bigg\{ v_h\in \mV_h\,\,:\,\, b(v_h,q_h)=0,\,\,\,\forall q_h\in\mQ_h\bigg\},
	\end{equation*}
	\begin{equation*}
		\mH_h:=\bigg\{q_h\in\mQ_h\,\,:\,\, b(v_h,q_h)=0,\,\,\,\forall v_h\in\mV_h\bigg\},
	\end{equation*}
	such that there exist positive constants $\alpha_*^0,\gamma_*^0>0$, independent of $h$, such that
	\begin{equation*}
		\label{discrete-ellipticity1}
		\begin{aligned}
			&a(v_h^0,v_h^0)\geq\alpha_*^0\|v_h^0\|_{\mV}^2\,\,\,\,\forall v_h^0\in\mK_h,\\
			&c(q_h^0,q_h^0)\geq\gamma_*^0\|q_h^0\|_{\mQ}^2\,\,\,\,\forall q_h^0\in\mQ_h.
		\end{aligned}
	\end{equation*}
	This coercivity can be also extended to the whole space $\mV_h$ and $\mQ_h$.
\end{assunp}

To simplify the analysis, the semi-discrete test functions will be written as $v$ instead of $v_h$ and $q$ instead of $q_h$, as long as the complete writing is not required.

We define the corresponding errors as follows
\begin{equation*}\label{errors}
	\eu:=u_h-u=\xi_u-\eta_u, \,\,\,\,\,\,\,\ep:=p_h-p=\xi_p-\eta_p,
\end{equation*}
where $\xi_u:=u_h-u_I,\,\xi_p:=p_h-p_I,\, \eta_u=u-u_I,$ and $\eta_p=p-p_I$. Here, $u_{I}\in \mV_h$ and $p_I\in \mQ_h$ represent general interpolations of $u$ and $p$, respectively (see for example \cite[Chapter 5]{boffi2013mixed} or \cite[Chapter 4.]{miao1998finite}).

We shall now study the semi-discretization of the mixed formulations considered in the previous section. The following problem corresponds to the semi-discretized version of Problem \ref{goal-prob-regular}.
\begin{problem}
	\label{goal-prob-regular-discreto}
	Find $u_h\in L_\mathcal{J}^1(\mV_h)$ and $p_h\in L_\mathcal{J}^1(\mQ_h)$ such that
	\begin{equation*}
		\label{pv-goal-prob-regular-discreto-1}
		\left\{\begin{aligned}
			&a(u_h,v) + b(v,p_h) = \langle f,v\rangle_{\mV} + \int_{0}^{t} \bigg[k_1(t,s)a(u_h(s),v) + k_2(t,s)b(v,p_h(s))\bigg]\,ds\\ 
			&b(u_h,q) - c(p_h,q)=\langle g,q\rangle_{\mQ} + \int_{0}^{t} \bigg[k_3(t,s)b\big(u_h(s),v\big) - k_4(t,s)c\big(v,p_h(s)\big)\bigg]\,ds, 
		\end{aligned}\right.
	\end{equation*}
	for all $(v,q)\in \mV_h\times\mQ_h$.
\end{problem}

In agreement with previous notations, each $v\in\mV_h$ and each $q\in\mQ_h$ might be splitted as
\begin{equation}
	\label{regular-sd-v-q}
	v=v^0+\overline{v},\hspace{1cm}q=q^0+\overline{q},
\end{equation}
with $v^0\in\mK_h,\, \overline{v}\in\mK_h^{\perp},\,q^0\in\mH_h$ and $\overline{q}\in\mH_h^{\perp}$. Similarly, the given data will be decomposed as follows
\begin{equation}
	\label{regular-sd-f-g}
	f=f_h^0+\overline{f}_h,\hspace{1cm}g_h=g_h^0+\overline{g}_h,
\end{equation}
with $f_h^0\in L_\mathcal{J}^1(\mK'),\,\overline{f}_h\in L_\mathcal{J}^1((\mK_h^{\perp})'),\,g_h^0\in L_\mathcal{J}^1(\mH'),$ and $\overline{g}_h\in L_\mathcal{J}^1((\mH_h^{\perp})')$. Note that the splitting \eqref{regular-sd-f-g} might be different from the splitting made in the continuous case since, in general, $\mK_h\subsetneq\mK$ and $\mH_h\subsetneq\mH$. Moreover,  the spaces $\mK_h^{\perp}$ and $\mH_h^{\perp}$ should always be understood as subspaces of $\mV_h$ and $\mQ_h$, respectively.

Subtracting Problem \ref{goal-prob-regular} and Problem \ref{goal-prob-regular-discreto} we obtain
\begin{equation*}
	\label{semi-discrete-9}
	\left\{\begin{aligned}
		a(\eu,v) + b(v,\ep) &= \int_{0}^{t}\bigg[k_1(t,s)a(\eu(s),v) + k_2(t,s)b(v,\ep(s))\bigg]\,ds,\\
		b(\eu,q) - c(\ep,q)&= \int_{0}^{t}\bigg[k_3(t,s)b(\eu(s),v) - k_4(t,s)c(v,\ep(s))\bigg]\,ds,
	\end{aligned}\right.
\end{equation*}
for all $(v,q)\in\mV_h\times\mQ_h$, which after adding and subtracting $u_I$ and $p_I$, allows to infer that $(\xi_u,\xi_p)\in L_{\mathcal{J}}^1(\mV_h\times\mQ_h\big)$  is the solution of the variational problem: Find $(\xi_u,\xi_p)\in L_{\mathcal{J}}^1(\mV_h\times\mQ_h\big)$ such that
\begin{equation*}
	\label{semi-discrete-10}
	\left\{\begin{aligned}
		a(\xi_u,v) + b(v,\xi_p) &=\langle\mF,v\rangle_{\mV}+ \int_{0}^{t} \bigg[k_1(t,s)a(\xi_u(s),v) + k_2(t,s)b(v,\xi_p(s))\bigg]\,ds, \\ 
		b(\xi_u,q) - c(\xi_p,q)&=\langle\mG,q\rangle_{\mQ}+ \int_{0}^{t}\bigg[k_3(t,s)b(\xi_u(s),v) - k_4(t,s)c(v,\xi_p(s))\bigg]\,ds,\\ 
	\end{aligned}\right.
\end{equation*}
for all $(v,q)\in \mV_h\times\mQ_h$, where
\begin{align*}
	\langle \mF,v\rangle_{\mV}&=a(\eta_u,v)+b(v,\eta_p)-\int_{0}^{t}\bigg[k_1(t,s)a(\eta_u(s),v)+k_2(t,s)b(v,\eta_p(s)) \bigg]\,ds\\
	\langle \mG,q\rangle_{\mQ} &= b(\eta_u,q) -c(\eta_p,q)-\int_{0}^{t}\bigg[k_3(t,s)b(\eta_u(s),q)-k_4(t,s)c(\eta_p(s),q) \bigg]\,ds.
\end{align*}

By using the boundedness of the kernels, the continuity of the bilinear forms, the definition of $\eta_u$ and $\eta_p$ and \eqref{regular-sd-v-q}-\eqref{regular-sd-f-g}, it follows that
\begin{align*}
	\Vert \overline{\mF}\Vert_{L_\mathcal{J}^1(\mV')}+\Vert \mF_0\Vert_{L_\mathcal{J}^1(\mV')}&\leq (1+TC_k)\bigg[\Vert a\Vert\,\Vert u - u_I\Vert_{L_\mathcal{J}^1(\mV)} + \Vert b\Vert\,\Vert p-p_I\Vert_{L_\mathcal{J}^1(\mQ)} \bigg],\\
		\Vert\overline{\mG}\Vert_{L_\mathcal{J}^1(\mV')}+\Vert\mG_0\Vert_{L_\mathcal{J}^1(\mV')}&\leq (1+TC_{\widetilde{k}})\bigg[\Vert b\Vert\,\Vert u - u_I\Vert_{L_\mathcal{J}^1(\mV)} + \Vert c\Vert\,\Vert p-p_I\Vert_{L_\mathcal{J}^1(\mQ)} \bigg],
	\end{align*}
where $C_k:=\max\{C_{k_1},C_{k_2}\}$ and $C_{\widetilde{k}}:=\max\{C_{k_3},C_{k_4}\}$. This leads to the estimate
\begin{equation*}
	\begin{aligned}
		\Vert&\overline{\mF}\Vert_{L_\mathcal{J}^1(\mV')}+ \Vert \mF_0\Vert_{L_\mathcal{J}^1(\mV')}+\Vert\overline{\mG}\Vert_{L_\mathcal{J}^1(\mV')}+\Vert\mG_0\Vert_{L_\mathcal{J}^1(\mV')}\\
		&\leq \bigg(1+T\max\{C_k,C_{\widetilde{k}}\}\bigg)\bigg[\bigg(2\Vert a\Vert + 4\Vert b\Vert + 2\Vert c\Vert\bigg)\bigg(\Vert \eta_u\Vert_{L_\mathcal{J}^1(\mV)}+\Vert \eta_p\Vert_{L_\mathcal{J}^1(\mQ)}\bigg)\bigg].
	\end{aligned}
\end{equation*}

Since {$\xi_u=\overline{\xi}_u+\xi_u^0$ and $\xi_p=\overline{\xi}_p+\xi_p^0$, we are in position to apply Theorem \ref{teo8} in order to obtain an error estimate for the generalized semi-discrete variational problem considered.  
	\begin{teo}
		\label{teo-sd-perturbed-estimate1}
		Under Assumption \ref{assumption-discrete-regular}, suppose that there exists $\beta_*>0$ such that 
		$$
		\displaystyle\sup_{v\in \mV_h}\frac{b(v,q)}{\Vert v\Vert_\mV}\geq \beta_* \Vert q\Vert_\mQ,\;\;\forall q\in \mH_h^{\perp} \quad\text{and}\quad \sup_{q\in \mQ_h}\frac{b(v,q)}{\Vert q\Vert_\mQ}\geq \beta_* \Vert v\Vert_\mV,\;\;\forall v\in \mK_h^{\perp}.
		$$
		Then, for every $f\in L_\mathcal{J}^1(\mV')$ and $g\in L_\mathcal{J}^1(\mQ')$, we have that Problem \ref{goal-prob-regular-discreto} has a unique solution. Moreover, if $(u,p)$ is a solution of Problem \ref{goal-prob-regular}, then for every $u_I\in L_\mathcal{J}^1(\mV_h)$ and for every $p_I\in L_\mathcal{J}^1(\mQ_h)$, we have the estimates
		\begin{equation*}
			\label{teo8-estimates-bounds-overline_u}
			\Vert \overline{\xi}_u\Vert_{L_\mathcal{J}^1(\mV)}\leq C_9\Vert\overline{\mF}\Vert_{L_\mathcal{J}^1(\mV')} + C_{13}\Vert \mF_0\Vert_{L_\mathcal{J}^1(\mV')} + C_3\Vert \overline{\mG}\Vert_{L_\mathcal{J}^1(\mQ')} + C_7\Vert \mG_0\Vert_{L_\mathcal{J}^1(\mQ')}
		\end{equation*}
		\begin{equation*}
			\label{teo8-estimates-bounds-u_0}
			\Vert \xi_u^0\Vert_{L_\mathcal{J}^1(\mV)}\leq C_{10}\Vert\overline{\mF}\Vert_{L_\mathcal{J}^1(\mV')} + C_{14}\Vert \mF_0\Vert_{L_\mathcal{J}^1(\mV')} + C_4\Vert \overline{\mG}\Vert_{L_\mathcal{J}^1(\mQ')} + C_8\Vert \mG_0\Vert_{L_\mathcal{J}^1(\mQ')}
		\end{equation*}
		\begin{equation*}
			\label{teo8-estimates-bounds-overline_p}
			\Vert \overline{\xi}_p\Vert_{L_\mathcal{J}^1(\mQ)}\leq C_{11}\Vert\overline{\mF}\Vert_{L_\mathcal{J}^1(\mV')} + C_{15}\Vert \mF_0\Vert_{L_\mathcal{J}^1(\mV')} + C_1\Vert \overline{\mG}\Vert_{L_\mathcal{J}^1(\mQ')} + C_5\Vert \mG_0\Vert_{L_\mathcal{J}^1(\mQ')}
		\end{equation*}
		\begin{equation*}
			\label{teo8-estimates-bounds-p_0}
			\Vert \xi_p^0\Vert_{L_\mathcal{J}^1(\mQ)}\leq C_{12}\Vert\overline{\mF}\Vert_{L_\mathcal{J}^1(\mV')} + C_{16}\Vert \mF_0\Vert_{L_\mathcal{J}^1(V')} + C_2\Vert \overline{\mG}\Vert_{L_\mathcal{J}^1(\mQ')} + C_6\Vert \mG_0\Vert_{L_\mathcal{J}^1(\mQ')},
		\end{equation*}
		where $C_i$, with $i=1,\ldots, 16$, are positive constants depending on the semi-discrete stability constants $\alpha_*^0,\beta_*,\gamma_*^0$, the continuity constants $\Vert a \Vert, \Vert c\Vert$, the constants $C_{k_i}, i=1,2,3,4,$ and the time of observation $T$. Moreover, we have that
		\begin{multline*}
			\Vert u_h-u\Vert_{L_\mathcal{J}^1(\mV)} + \Vert p_h-p\Vert_{L_\mathcal{J}^1(\mQ)}\leq C\bigg(\inf_{v\in \mV_h}\Vert u-v\Vert_{L_\mathcal{J}^1(\mV)}+\inf_{q\in \mQ_h}\Vert p-q\Vert_{L_\mathcal{J}^1(\mQ)}\bigg),
		\end{multline*}
		where $C>0$ is a constant depending on $C_i$.
		
	\end{teo}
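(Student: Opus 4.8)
The plan is to regard the error system for $(\xi_u,\xi_p)$ derived above as a \emph{discrete} instance of Problem \ref{goal-prob-regular}, posed on the finite dimensional pair $\mV_h\times\mQ_h$ with data $\mF$ and $\mG$, and then to invoke Theorem \ref{teo8} verbatim on these spaces. The point is that the abstract Theorem \ref{teo8} uses only the Hilbert-space structure, the continuity of the forms, the coercivity on the kernels, and the inf-sup conditions; all of these are available on $\mV_h$ and $\mQ_h$ thanks to Assumption \ref{assumption-discrete-regular} and the two stated discrete inf-sup conditions, now with the constants $\alpha_*^0,\gamma_*^0,\beta_*$ replacing $\alpha_0,\gamma_0,\beta$.

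\emph{Existence and uniqueness.} Under Assumption \ref{assumption-discrete-regular} and the discrete inf-sup conditions, the discrete counterpart of the spatial saddle-point operator of Problem \ref{po-global-prob-regular}, acting on $\mV_h\times\mQ_h$, is invertible (these are exactly the hypotheses of Theorem \ref{teo8} at the discrete level). Since the Volterra structure in time is untouched by the spatial discretization, the same argument via \cite[Chapter 2, Section 3]{gripenberg1990volterra} used in the proof of Theorem \ref{teo8} yields a unique $(u_h,p_h)\in L_\mathcal{J}^1(\mV_h\times\mQ_h)$.

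\emph{Component estimates.} I then apply Theorem \ref{teo8} to the $(\xi_u,\xi_p)$ system, with $\mF$ in the role of $f$ and $\mG$ in the role of $g$, where the splittings are taken with respect to the \emph{discrete} kernels $\mK_h,\mH_h$ (recall that in general $\mK_h\not\subset\mK$, so the decompositions are internal to $\mV_h\times\mQ_h$). This produces precisely the four displayed inequalities for $\overline{\xi}_u,\xi_u^0,\overline{\xi}_p,\xi_p^0$, with the constants $C_1,\dots,C_{16}$ now depending on $\alpha_*^0,\beta_*,\gamma_*^0$, on $\Vert a\Vert,\Vert c\Vert$, on the $C_{k_i}$, and on $T$.

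\emph{Quasi-optimality.} Adding the four component estimates and using $\xi_u=\overline{\xi}_u+\xi_u^0$ and $\xi_p=\overline{\xi}_p+\xi_p^0$ gives
\begin{equation*}
	\Vert\xi_u\Vert_{L_\mathcal{J}^1(\mV)}+\Vert\xi_p\Vert_{L_\mathcal{J}^1(\mQ)}\leq C\big(\Vert\overline{\mF}\Vert_{L_\mathcal{J}^1(\mV')}+\Vert\mF_0\Vert_{L_\mathcal{J}^1(\mV')}+\Vert\overline{\mG}\Vert_{L_\mathcal{J}^1(\mQ')}+\Vert\mG_0\Vert_{L_\mathcal{J}^1(\mQ')}\big).
\end{equation*}
Inserting the consistency estimate established above (which bounds the right-hand side by $C(\Vert\eta_u\Vert_{L_\mathcal{J}^1(\mV)}+\Vert\eta_p\Vert_{L_\mathcal{J}^1(\mQ)})$), and then using the triangle inequalities $\Vert\eu\Vert=\Vert\xi_u-\eta_u\Vert\leq\Vert\xi_u\Vert+\Vert\eta_u\Vert$ and $\Vert\ep\Vert=\Vert\xi_p-\eta_p\Vert\leq\Vert\xi_p\Vert+\Vert\eta_p\Vert$, I obtain $\Vert u_h-u\Vert_{L_\mathcal{J}^1(\mV)}+\Vert p_h-p\Vert_{L_\mathcal{J}^1(\mQ)}\leq C(\Vert u-u_I\Vert_{L_\mathcal{J}^1(\mV)}+\Vert p-p_I\Vert_{L_\mathcal{J}^1(\mQ)})$. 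Since $u_I\in\mV_h$ and $p_I\in\mQ_h$ are arbitrary, taking the infimum over the discrete spaces yields the asserted quasi-optimal bound.

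The main obstacle is the second step: one has to make sure that Theorem \ref{teo8} really transfers to the discrete setting with constants depending only on $\alpha_*^0,\beta_*,\gamma_*^0$ --- hence uniform in $h$ --- and that the correct decomposition is the one along the discrete kernels $\mK_h,\mH_h$, even though these need not be contained in their continuous counterparts. Once this is granted, the remaining algebra (summation, the consistency bound, and the triangle inequality) is routine.
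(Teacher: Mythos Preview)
Your proposal is correct and follows essentially the same route as the paper: apply Theorem \ref{teo8} to the discrete error system for $(\xi_u,\xi_p)$ on $\mV_h\times\mQ_h$ with data $\mF,\mG$ (using the discrete constants $\alpha_*^0,\beta_*,\gamma_*^0$ and the decomposition along $\mK_h,\mH_h$), and then obtain the quasi-optimal bound via the consistency estimate and the triangle inequality. Your write-up is in fact more explicit than the paper's, which dispatches the argument in two sentences.
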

	\begin{proof}
		The arguments provided above allow us to obtain the estimates for $\overline{\xi}_u$, $\xi_u^0$, $\overline{\xi}_p$ and $\xi_p^0$ by a direct application of Theorem \ref{teo8}. The constants obtained correspond to the semi-discrete counterparts of the constants $C_i$ in the continuous case. On the other hand, using the triangle inequality we obtain the error estimate for $u$ and $p$, where the constant $C$ will depend on $C_i$ and $C_{k_i}$.
	\end{proof}
	
	Returning to the parameter-dependent problem, in Problem \ref{goal-prob-regular-discreto} we assume that $c(\cdot,\cdot)$ is of the form \eqref{bilinear_form_inner_product_Q} and take $k_3\equiv k_4\equiv 0$ in order to obtain the semi-discrete form of Problem \ref{prob2-weak}.
	
	\begin{problem}
		\label{prob-lambda}
		Find $(u_h,p_h)\in L_\mathcal{J}^1(\mV_h\times\mQ_h)$ such that 
		\begin{equation*}
			\label{teo-p_lambda-discreto}\left\{
			\begin{aligned}
				a(u_h,v) + b(v,p_h) &=\langle f,v\rangle_{\mV} + \int_{0}^{t}\bigg[k_1(t,s)a\big(u_h(s),v\big) +k_2(t,s) b\big(v,p_h(s)\big) \bigg]\,ds,\\ 
				b(u_h,q) -\lambda(p_h,q)_{\mQ} &=\langle g,q\rangle_{\mQ} 
			\end{aligned}
			\right.
		\end{equation*}
		for all $(v,q)\in \mV_h\times\mQ_h$.
	\end{problem}
	Hence, we have the corresponding system, obtained from subtracting Problem \ref{prob2-weak} and Problem \ref{prob-lambda}:
	\begin{equation}
		\label{semi-discrete-perturbed-1}
		\left\{
		\begin{aligned}
			a(\eu,v) + b(v,\ep) &= \int_{0}^{t}\bigg[k_1(t,s)a\big(\eu(s),v\big) +k_2(t,s)b\big(v,\ep(s)\big) \bigg]\,ds,\\
			b(\eu,q) -\lambda(\ep,q)_{\mQ} &=0,
		\end{aligned}
		\right.
	\end{equation}
	for all $(v,q)\in \mV_h\times\mQ_h$. Then, from the linearity of $a(\cdot,\cdot)$ and  $b(\cdot,\cdot)$ we rewrite the problem above as follows: Find  $(\xi_u,\xi_p)\in L_{\mathcal{J}}^1(\mV_h\times\mQ_h\big)$  such that
	\begin{equation}
		\label{semi-discrete-perturbed-2}
		\left\{
		\begin{aligned}
			a(\xi_u,v) + b(v,\xi_p) &=\langle \mF,v\rangle_{\mV}+ \int_{0}^{t}\bigg[k_1(t,s)a\big(\xi_u(s),v\big) + k_2(t,s)b\big(v,\xi_p(s)\big) \bigg]\,ds,\\
			b(\xi_u,q) -\lambda(\xi_p,q)_{\mQ} &=\langle \mG,q\rangle_{\mQ} ,
		\end{aligned}
		\right.
	\end{equation}
	for all $(v,q)\in \mV_h\times\mQ_h$, where
	\begin{align}
		\langle \mF,v\rangle_{\mV}&=a(\eta_u,v)+b(v,\eta_p)-\int_{0}^{t}\bigg[k_1(t,s)a\big(\eta_u(s),v\big)+k_2(t,s)b\big(v,\eta_p(s)\big) \bigg]\,ds,\label{F_sd-estimate-3}\\
		\langle \mG,q\rangle_{\mQ} &= b(\eta_u,q) -\lambda(\eta_p,q)_\mQ \label{G_sd-estimate-3}.
	\end{align}

	Then, from Theorem \ref{teo-p_lambda} we have the following result.

	\begin{teo}
		\label{teo-sd-perturbed-main}
		Together with Assumption \ref{assumption-discrete-regular}, assume that $(u,p)$ is a solution of Problem \ref{prob2-weak} and let $(u_h,p_h)$ be the unique solution of Problem \ref{prob-lambda}. Then, we have the error estimates
		\begin{equation*}
			\begin{aligned}
				&\Vert \eu\Vert_{L_\mathcal{J}^1(\mV)}\leq C_{1u} \inf_{v\in \mV_h}\Vert u-v\Vert_{L_\mathcal{J}^1(\mV)}+ C_{1p}\inf_{q\in \mQ_h}\Vert p-q\Vert_{L_\mathcal{J}^1(\mQ)},\\
				&\Vert\ep\Vert_{L_\mathcal{J}^1(\mV)}\leq C_{2u}\inf_{v\in \mV_h}\Vert u-v\Vert_{L_\mathcal{J}^1(\mV)} +C_{2p}\inf_{q\in \mQ_h}\Vert p-q\Vert_{L_\mathcal{J}^1(\mQ)}.
			\end{aligned}
		\end{equation*}
		Moreover, we have that there exists a constant $C>0$ depending on $C_{iu}$ and $C_{ip}, i=1,2$, uniform with respect to $\lambda$, such that
		$$
		\Vert u_h-u\Vert_{L_\mathcal{J}^1(\mV)} + \Vert p_h-p\Vert_{L_\mathcal{J}^1(\mQ)}\leq C\left(\inf_{v\in \mV_h}\Vert u-v\Vert_{L_\mathcal{J}^1(\mV)}+\inf_{q\in \mQ_h}\Vert p-q\Vert_{L_\mathcal{J}^1(\mQ)}\right).
		$$
	\end{teo}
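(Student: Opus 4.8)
The plan is to recognize that the error pair $(\xi_u,\xi_p)$ appearing in \eqref{semi-discrete-perturbed-2} solves exactly a discrete instance of the parameter-dependent mixed problem treated in Theorem \ref{teo-p_lambda}, only now posed on $\mV_h\times\mQ_h$ with the consistency terms $\mF$ and $\mG$ playing the role of data. Accordingly, the first step is to invoke the discrete analogue of Theorem \ref{teo-p_lambda}: under Assumption \ref{assumption-discrete-regular} (discrete ellipticity of $a(\cdot,\cdot)$ on $\mK_h$ with constant $\alpha_*^0$) together with the discrete surjectivity of $\mB$ onto $\mQ_h'$, i.e. the full discrete inf-sup with constant $\beta_*$, the entire argument of that theorem transfers verbatim to the discrete spaces: the lifting of $\lambda\mathfrak{R}_\mQ\xi_p$, the splitting $\xi_u=\widetilde{\xi}_u+\xi_u^0$ with $\xi_u^0\in\mK_h$, and the two Gronwall steps. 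This simultaneously establishes the well-posedness of Problem \ref{prob-lambda} and yields
$$
\Vert\xi_u\Vert_{L_\mathcal{J}^1(\mV)}\leq C_1\Vert\mF\Vert_{L_\mathcal{J}^1(\mV')}+C_2\Vert\mG\Vert_{L_\mathcal{J}^1(\mQ')},\quad \Vert\xi_p\Vert_{L_\mathcal{J}^1(\mQ)}\leq C_3\Vert\mF\Vert_{L_\mathcal{J}^1(\mV')}+C_4\Vert\mG\Vert_{L_\mathcal{J}^1(\mQ')},
$$
with constants $C_i$ uniform in $\lambda$ (depending on $\alpha_*^0,\beta_*$ in place of $\alpha_0,\beta$).

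Next I would bound the data $\mF$ and $\mG$ by the interpolation errors $\eta_u=u-u_I$ and $\eta_p=p-p_I$. From \eqref{F_sd-estimate-3}, the continuity of $a(\cdot,\cdot)$ and $b(\cdot,\cdot)$ and the kernel bound $C_k=\max\{C_{k_1},C_{k_2}\}$ give $\Vert\mF\Vert_{L_\mathcal{J}^1(\mV')}\leq(1+TC_k)\bigl(\Vert a\Vert\,\Vert\eta_u\Vert_{L_\mathcal{J}^1(\mV)}+\Vert b\Vert\,\Vert\eta_p\Vert_{L_\mathcal{J}^1(\mQ)}\bigr)$, exactly as in the display preceding Theorem \ref{teo-sd-perturbed-estimate1}. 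Since $k_3\equiv k_4\equiv0$, there is no integral term in \eqref{G_sd-estimate-3}, and one simply obtains $\Vert\mG\Vert_{L_\mathcal{J}^1(\mQ')}\leq\Vert b\Vert\,\Vert\eta_u\Vert_{L_\mathcal{J}^1(\mV)}+\lambda\,\Vert\eta_p\Vert_{L_\mathcal{J}^1(\mQ)}$. The essential observation is that the only occurrence of $\lambda$ in these data bounds is the explicit factor multiplying $\Vert\eta_p\Vert$ that comes from the term $\lambda(\eta_p,q)_\mQ$; since $C_2$ and $C_4$ remain bounded as $\lambda\to0$, this contribution is harmless (indeed it vanishes), so every composed constant stays uniform in the locking regime.

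Finally, I would recover the genuine errors from $\eu=\xi_u-\eta_u$ and $\ep=\xi_p-\eta_p$ through the triangle inequality, absorbing $\Vert\eta_u\Vert_{L_\mathcal{J}^1(\mV)}$ and $\Vert\eta_p\Vert_{L_\mathcal{J}^1(\mQ)}$ into the right-hand sides. Because all the preceding bounds are linear in $\Vert\eta_u\Vert$ and $\Vert\eta_p\Vert$, and $u_I\in\mV_h$, $p_I\in\mQ_h$ are arbitrary, taking the infimum separately over $v\in\mV_h$ and $q\in\mQ_h$ turns $\Vert u-u_I\Vert$ and $\Vert p-p_I\Vert$ into $\inf_v\Vert u-v\Vert_{L_\mathcal{J}^1(\mV)}$ and $\inf_q\Vert p-q\Vert_{L_\mathcal{J}^1(\mQ)}$, yielding the stated estimates for $\eu$ and $\ep$ and, after summing, the quasi-optimal bound with a single constant $C>0$. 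I expect the main obstacle to be the bookkeeping of the $\lambda$-dependence: one must verify that every place where $\lambda$ enters — both inside the stability constants $C_i$ and through the factor $\lambda$ carried by $\mG$ — combines to a bound that does not degenerate as $\lambda\to0$. This is precisely the point at which appealing to Theorem \ref{teo-p_lambda}, rather than to Theorem \ref{teo8} whose constants scale like $\lambda^{-1}$, becomes indispensable.
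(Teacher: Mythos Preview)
Your proposal is correct and follows essentially the same approach as the paper: apply the discrete analogue of Theorem \ref{teo-p_lambda} to the error system \eqref{semi-discrete-perturbed-2} to bound $\xi_u,\xi_p$ in terms of $\mF,\mG$, estimate $\mF$ and $\mG$ by $(1+TC_k)(\Vert a\Vert\,\Vert\eta_u\Vert+\Vert b\Vert\,\Vert\eta_p\Vert)$ and $\Vert b\Vert\,\Vert\eta_u\Vert+\lambda\Vert\eta_p\Vert$ respectively, then use the triangle inequality and take the infimum over $u_I,p_I$. Your additional commentary on why the $\lambda$-dependence is harmless and why Theorem \ref{teo-p_lambda} (rather than Theorem \ref{teo8}) must be invoked is exactly the point the paper is making.
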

	\begin{proof}
		Let $u_I\in L_\mathcal{J}^1(\mV_h)$ and  $p_I\in L_\mathcal{J}^1(\mQ_h)$. Then, applying Theorem \ref{teo-p_lambda} in \eqref{semi-discrete-perturbed-2}, we have
		\begin{align*}
			\label{teo-sd-perturbed-estimate2-1}
			\Vert \xi_u\Vert_{L_\mathcal{J}^1(\mV)}&\leq C_1\Vert \mF\Vert_{L_\mathcal{J}^1(\mV')} + C_2\Vert \mG\Vert_{L_\mathcal{J}^1(\mQ')},\nonumber\\
			\Vert \xi_p \Vert_{L_\mathcal{J}^1(\mQ)}&\leq C_3\Vert \mF\Vert_{L_\mathcal{J}^1(\mV')} + C_4\Vert \mG\Vert_{L_\mathcal{J}^1(\mQ')}\nonumber,
		\end{align*}
		where $C_i, i=1,\ldots,4$ are the space finite dimensional counterparts of constants, uniform in $\lambda$, given by Theorem \ref{teo-p_lambda} depending on $\alpha_*^0$, $\beta_*$, the stability constant $\Vert a \Vert$ and period of observation $T$.
		
		Now, estimating \eqref{F_sd-estimate-3} and \eqref{G_sd-estimate-3} gives
		\begin{equation*}
			\begin{aligned}
				\Vert \mF\Vert_{L_\mathcal{J}^1(\mV')}&\leq \bigg(1+TC_k\bigg)\bigg[\Vert a\Vert\,\Vert u - u_I\Vert_{L_\mathcal{J}^1(\mV)} + \Vert b\Vert\,\Vert p-p_I\Vert_{L_\mathcal{J}^1(\mQ)} \bigg],\\
				\Vert \mG\Vert_{L_\mathcal{J}^1(\mQ')}&\leq \Vert b\Vert\,\Vert u-u_I\Vert_{L_\mathcal{J}^1(\mV)}+ \lambda\Vert p-p_I\Vert_{L_\mathcal{J}^1(\mQ)},
			\end{aligned}
		\end{equation*}
		where $C_{k}:=\max\{C_{k_1},C_{k_2} \}$.	From the triangle inequality we obtain
		\begin{equation*}
			\label{teo-sd-basic-error4-1}
			\begin{aligned}
				\Vert \eu\Vert_{L_\mathcal{J}^1(\mV)}&\leq \Vert \xi_{u} - \eta_u\Vert_{L_\mathcal{J}^1(\mV)}\leq \Vert \xi_{u}\Vert_{L_\mathcal{J}^1(\mV)}+\Vert \eta_u\Vert_{L_\mathcal{J}^1(\mV)}\\
				&\leq C_{1u} \Vert \eta_u\Vert_{L_\mathcal{J}^1(\mV)}  +C_{1p}\Vert \eta_p\Vert_{L_\mathcal{J}^1(\mQ)},
			\end{aligned}
		\end{equation*}
		where
		\begin{equation*}
			C_{1u}:=C_1\Vert a\Vert\ \bigg(1+TC_k\bigg)+ C_2\Vert b\Vert+1,
			\quad\text{
				and}
			\quad
			C_{1p}:=C_1\Vert b\Vert\bigg(1+TC_k\bigg)+C_2\lambda.
		\end{equation*}
		Similarly, we have
		\begin{equation*}
			\label{teo-sd-basic-error4-2}
			\begin{aligned}
				\Vert \ep\Vert_{L_\mathcal{J}^1(\mQ)}&\leq \Vert \xi_p-\eta_p\Vert_{L_\mathcal{J}^1(\mQ)}\leq \Vert \xi_p\Vert_{L_\mathcal{J}^1(\mQ)}+ \Vert \eta_p\Vert_{L_\mathcal{J}^1(\mQ)}\\
				&\leq C_{2u} \Vert \eta_u\Vert_{L_\mathcal{J}^1(\mV)}  +C_{2p}\Vert \eta_p\Vert_{L_\mathcal{J}^1(\mQ)},
			\end{aligned}
		\end{equation*}
		where
		\begin{equation*}
			C_{2u}:=C_3\Vert a\Vert\ \bigg(1+TC_k\bigg)+ C_4\Vert b\Vert
			\quad\text{
				and}
			\quad
			C_{2p}:=C_3\Vert b\Vert\bigg(1+TC_k\bigg)+C_4\lambda+1.
		\end{equation*}
		We conclude the proof by taking the infimum over all $u_I$ and $p_I$.
	\end{proof}
	
	\subsection{Error estimates in a weaker norm}
	\label{subsection2-4}
	
	Now we include some additional estimates using a \textit{duality} argument in the sense of the Volterra theory (See \cite{shaw2001optimal} for instance). Let us consider two spaces $\mV_{-}$ and $\mQ_{-}$, which we assume to be less regular than $\mV$ and $\mQ$, respectively, satisfying the folowing dense inclusions
	\begin{equation}
		\label{dual-inclusion1}
		\mV \xhookrightarrow[]{}\mV_{-}\quad\text{and}\quad \mQ\xhookrightarrow[]{}\mQ_{-}.
	\end{equation}
	Our aim is to estimate $\Vert u-u_h\Vert_{L_\mathcal{J}^1(\mV_{-})}$ and $\Vert p - p_h\Vert_{L_\mathcal{J}^1(\mQ_{-})}$. To accomplish this task, we define
	\begin{equation*}
		\mV_{+}':=(\mV_{-})',\quad \mQ_{+}':=(\mQ_{-})',
	\end{equation*}
	where the subindex $``+"$ suggests that we have a more regular space. On the other hand, the inclusions provided in \eqref{dual-inclusion1} allow us to obtain
	\begin{equation*}
		\label{dual-inclusion2}
		\mV_{+}'\xhookrightarrow[]{}\mV',\quad \mQ_{+}'\xhookrightarrow[]{}\mQ'.
	\end{equation*} 
	For each $w\in \mV_{++}$, $w_h\in \mV_h$, $m\in\mQ_{++}$ and $m_h\in\mQ_h$, we define the errors $\ew:=w-w_h$ and $\emm:=m-m_h$, , where $\mV_{++}$ and $\mQ_{++}$ will be understood as more regular spaces than $\mV$ and $\mQ$, respectively, satisfying the inclusions
	\begin{equation*}
		\label{dual-inclusion3}
		\mV_{++}\xhookrightarrow[]{}\mV,\quad\mQ_{++}\xhookrightarrow[]{}\mQ.
	\end{equation*}
	Now we introduce the following hypothesis, related with a dual-backward mixed formulation of Problem \ref{goal-mixed-problem1}. 
	\begin{hypo}
		\label{dual-hypo1-goal-prob1}
		For any $\tau\in\mathcal{J}$ and for any $(f_+,g_+)\in L_{[0,\tau]}^{\infty}(\mV_{+}'\times\mQ_{+}')$, we assume that the solution $(w,m)$ to 
		\begin{equation}
			\label{dual-problem3}
			\left\{\begin{aligned}
				a(v,w)+b(v,m)&=\langle f_+,v\rangle_{\mV_{+}'\times \mV}+\int_{t}^\tau\bigg[  k_1(s,t)a(v,w(s))+ k_2(s,t)b(v,m(s))\bigg]ds,\\
				b(w,q) -\lambda (q,m)_{\mQ}&=\langle g_+,q\rangle_{\mQ_{+}'\times \mQ},
			\end{aligned}\right.
		\end{equation}
		for all $(v,q)\in\mV\times\mQ$, belongs to $L_{[0,\tau]}^{\infty}(\mV_{++}\times\mQ_{++})$ a.e. in $[0,\tau]$. Moreover, there exists a constant $\widehat{C}$, independent of $f_+$ and $g_+$, such that
		\begin{equation*}
			\Vert w\Vert_{L_{[0,\tau]}^{\infty}(\mV_{++})}+\Vert m\Vert_{L_{[0,\tau]}^{\infty}(\mQ_{++})}\leq \widehat{C}\bigg(\Vert f_+\Vert_{L_{[0,\tau]}^{\infty}(\mV_{+}')}+\Vert g_+\Vert_{L_{[0,\tau]}^{\infty}(\mQ_{+}')}\bigg).
		\end{equation*}
	\end{hypo}
	With this hypothesis at hand, we prove the following result.
	\begin{teo}
		\label{dual-teo3-goal-prob}
		Under the hypotheses of Theorem \ref{teo-sd-perturbed-main}, assume that Hypothesis \ref{dual-hypo1-goal-prob1} holds. Then, there exists a constant $C$, independent of $h$ and $\lambda$, such that
		\begin{equation*}
			\begin{aligned}
				&\Vert u - u_h\Vert_{L_\mathcal{J}^1(\mV_{-})} + \Vert p - p_h\Vert_{L_\mathcal{J}^1(\mQ_{-})} \\
				&\hspace{1cm}\leq C\left(\inf_{v\in \mV_h}\Vert u - v\Vert_{L_\mathcal{J}^1(\mV)} + \inf_{q\in \mQ_h}\Vert p - q\Vert_{L_\mathcal{J}^1(\mQ)}\right)(r(h)+ n(h)),
			\end{aligned}
		\end{equation*}
		where
		$$
		\begin{aligned}
			&r(h):=\sup_{w\in L_{[0,\tau]}^{\infty}(\mV_{++})}\inf_{w_h\in L_{[0,\tau]}^{\infty}(\mV_h)}\frac{\Vert w- w_h\Vert_{L_{[0,\tau]}^{\infty}(\mV)}}{\Vert w \Vert_{L_{[0,\tau]}^{\infty}(\mV_{++})}},\\
			&n(h):=\sup_{m\in L_{[0,\tau]}^{\infty}(\mQ_{++})}\inf_{m_h\in L_{[0,\tau]}^{\infty}(\mQ_h)}\frac{\Vert m- m_h\Vert_{L_{[0,\tau]}^{\infty}(\mQ)}}{\Vert m \Vert_{L_{[0,\tau]}^{\infty}(\mQ_{++})}}.
		\end{aligned}
		$$
		Moreover, if $r(h)+n(h)\leq Ch$, then
		\begin{equation*}
			\begin{aligned}
				&\Vert u - u_h\Vert_{L_\mathcal{J}^1(\mV_{-})} + \Vert p - p_h\Vert_{L_\mathcal{J}^1(\mQ_{-})}\\
				&\hspace{1cm}\leq Ch\bigg(\inf_{v\in \mV_h}\Vert u - v\Vert_{L_\mathcal{J}^1(\mV)} + \inf_{q\in \mQ_h}\Vert p - q\Vert_{L_\mathcal{J}^1(\mQ)}\bigg).
			\end{aligned}
		\end{equation*}
	\end{teo}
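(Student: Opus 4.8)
The plan is to run an Aubin--Nitsche (duality) argument adapted to the Volterra setting, exactly as anticipated by Hypothesis \ref{dual-hypo1-goal-prob1} and the backward-in-time problem \eqref{dual-problem3}. Since $(L_{[0,\tau]}^1(\mV_{-}))'=L_{[0,\tau]}^\infty(\mV_{+}')$ and likewise for $\mQ$, I would first reduce the two weaker-norm errors to a single space--time duality pairing,
\begin{equation*}
\Vert u-u_h\Vert_{L_\mathcal{J}^1(\mV_{-})}+\Vert p-p_h\Vert_{L_\mathcal{J}^1(\mQ_{-})}=\sup\int_{0}^{\tau}\big[\langle f_+,u-u_h\rangle+\langle g_+,p-p_h\rangle\big]\,dt,
\end{equation*}
the supremum running over $(f_+,g_+)$ in the unit ball of $L_{[0,\tau]}^\infty(\mV_{+}'\times\mQ_{+}')$. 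For each such pair I would let $(w,m)$ solve the dual problem \eqref{dual-problem3}, which by Hypothesis \ref{dual-hypo1-goal-prob1} lies in $L_{[0,\tau]}^\infty(\mV_{++}\times\mQ_{++})$ with $\Vert w\Vert_{L_{[0,\tau]}^\infty(\mV_{++})}+\Vert m\Vert_{L_{[0,\tau]}^\infty(\mQ_{++})}\le\widehat{C}\big(\Vert f_+\Vert_{L_{[0,\tau]}^\infty(\mV_{+}')}+\Vert g_+\Vert_{L_{[0,\tau]}^\infty(\mQ_{+}')}\big)$.

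\textbf{The core identity.} Testing the two equations of \eqref{dual-problem3} with $v=u-u_h$ and $q=p-p_h$ and adding, I would express the data pairing $\langle f_+,u-u_h\rangle+\langle g_+,p-p_h\rangle$ as the symmetric form $a(u-u_h,w)+b(u-u_h,m)+b(w,p-p_h)-\lambda(p-p_h,m)_{\mQ}$ minus a dual memory contribution. I would then invoke Galerkin orthogonality: the error equations \eqref{semi-discrete-perturbed-1} hold for \emph{every} discrete test function, so for arbitrary $(w_h,m_h)\in L_\mathcal{J}^1(\mV_h\times\mQ_h)$ I may subtract the primal error equations tested with $(w_h,m_h)$. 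The non-integral part then collapses to $a(u-u_h,w-w_h)+b(u-u_h,m-m_h)+b(w-w_h,p-p_h)-\lambda(p-p_h,m-m_h)_{\mQ}$, so that every term already carries a factor $w-w_h$ or $m-m_h$.

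\textbf{Main obstacle.} The delicate step, which I expect to be the heart of the proof, is the treatment of the two memory integrals: a forward one ($\int_0^t$) coming from the primal equations and a backward one ($\int_t^\tau$) coming from the dual. After integrating the identity in $t$ over $[0,\tau]$, I would apply Fubini's theorem to recast the backward double integral over the same region $\{0\le s\le t\le\tau\}$ as the forward one; the time-transposed kernels $k_i(s,t)$ in \eqref{dual-problem3} are chosen precisely so that, after relabelling the dummy variables, the primal and dual integrands match term by term in their spatial pairings. Combined with the Galerkin orthogonality already exploited through $(w_h,m_h)$, the memory contributions then also reduce to integrals of $a(\,\cdot\,,(w-w_h)(t))$ and $b((w-w_h)(t),\,\cdot\,)$ (and symmetrically for $m-m_h$) against the histories of $u-u_h$ and $p-p_h$. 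Verifying this alignment termwise, controlling the cross terms, and checking that no factor $\lambda^{-1}$ is generated is where the bookkeeping is heaviest.

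\textbf{Conclusion.} Once all contributions carry a $w-w_h$ or $m-m_h$ factor, I would estimate them by continuity of $a(\cdot,\cdot)$, $b(\cdot,\cdot)$ and $(\cdot,\cdot)_{\mQ}$, uniformly in $\lambda$ (here $\lambda\le C$ ensures the term $\lambda(p-p_h,m-m_h)_{\mQ}$ is harmless). The history factors $\Vert u-u_h\Vert_{L_\mathcal{J}^1(\mV)}$ and $\Vert p-p_h\Vert_{L_\mathcal{J}^1(\mQ)}$ are bounded by the $\lambda$-uniform a priori estimate of Theorem \ref{teo-sd-perturbed-main} in terms of $\inf_{v\in\mV_h}\Vert u-v\Vert_{L_\mathcal{J}^1(\mV)}+\inf_{q\in\mQ_h}\Vert p-q\Vert_{L_\mathcal{J}^1(\mQ)}$, while choosing $(w_h,m_h)$ as best approximants and using the definitions of $r(h)$ and $n(h)$ yields $\Vert w-w_h\Vert_{L_{[0,\tau]}^\infty(\mV)}\le r(h)\Vert w\Vert_{L_{[0,\tau]}^\infty(\mV_{++})}$ and $\Vert m-m_h\Vert_{L_{[0,\tau]}^\infty(\mQ)}\le n(h)\Vert m\Vert_{L_{[0,\tau]}^\infty(\mQ_{++})}$. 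Substituting the regularity bound of Hypothesis \ref{dual-hypo1-goal-prob1} and taking the supremum over the unit ball makes $\widehat{C}\big(\Vert f_+\Vert+\Vert g_+\Vert\big)$ cancel against the normalisation, leaving the product of $\inf_{v\in\mV_h}\Vert u-v\Vert_{L_\mathcal{J}^1(\mV)}+\inf_{q\in\mQ_h}\Vert p-q\Vert_{L_\mathcal{J}^1(\mQ)}$ with $r(h)+n(h)$. The final assertion follows at once by inserting $r(h)+n(h)\le Ch$.
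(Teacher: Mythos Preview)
Your approach is the same Aubin--Nitsche duality as the paper's: test the backward problem \eqref{dual-problem3} with the errors, integrate over $[0,\tau]$, Fubini the backward memory into forward form, and subtract the Galerkin orthogonality relations \eqref{semi-discrete-perturbed-1} to force every surviving term to carry a factor $w-w_h$ or $m-m_h$. The conclusion via $r(h),n(h)$, Hypothesis~\ref{dual-hypo1-goal-prob1} and Theorem~\ref{teo-sd-perturbed-main} is exactly what the paper does.

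The one substantive difference is how the duality is set up and, correspondingly, how the memory cross-term is disposed of. You take a supremum over the unit ball of $L_{[0,\tau]}^\infty(\mV_{+}'\times\mQ_{+}')$; the paper instead fixes concrete data $f_+(t)=\texttt{e}_u(t)\Vert\texttt{e}_u(t)\Vert_{\mV_-}^{-1}$ and, less obviously, $g_+(t)=z(t)\,\texttt{e}_p(t)\Vert\texttt{e}_p(t)\Vert_{\mQ_-}^{-1}$ with a correction factor
\[
z(t)=1+\int_0^t k_2(t,s)\big[b(w_h(t),\texttt{e}_p(s))-\lambda(\texttt{e}_p(s),m_h(t))_\mQ\big]\,ds\,\Vert\texttt{e}_p(t)\Vert_{\mQ_-}^{-1}.
\]
This $z(t)$ is there precisely because your assertion that ``the primal and dual integrands match term by term'' is slightly too optimistic: the $k_2$-memory in the primal error equation pairs $\texttt{e}_p(s)$ with the test function $w_h(t)$, whereas the $k_2$-memory in the dual (after Fubini) pairs $\texttt{e}_u(s)$ with $m(t)$, and these do not cancel under Galerkin orthogonality alone. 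What survives is a term of the shape $\int_0^\tau\!\int_0^t k_2(t,s)\big[b(w_h(t),\texttt{e}_p(s))-\lambda(\texttt{e}_p(s),m_h(t))_\mQ\big]\,ds\,dt$, which the paper absorbs on the left via the $z(t)$ device (invoking \cite[Lemma 3]{shaw2001optimal}). Your supremum route can be closed too, but it needs either this same trick or an additional Gronwall step in $\tau$ on $\Vert\texttt{e}_p\Vert_{L^1_{[0,\tau]}(\mQ_-)}$; without one of these the leftover term is not yet controlled by $r(h)+n(h)$.
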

	\begin{proof}
		Taking time dependent test functions $v\in L_{[0,\tau]}^1(\mV)$ in the first equation of \eqref{dual-problem3}, integrating in $[0,\tau]$, and interchanging the order of integration gives,
		\begin{equation*}
			\begin{aligned}
				&\int_{0}^\tau\langle f_+,v(t)\rangle_{\mV_{+}'\times \mV} dt =\int_0^\tau\bigg\{a\big(v(t),w(t)\big)+b\big(v(t),m(t)\big)\\
				&\hspace{2cm} - \int_{0}^{t}\bigg[k_1(t,s)a(v(s),w(t))+k_2(t,s)b(v(s),m(t)) \bigg]ds\bigg\}dt.
			\end{aligned}
		\end{equation*}
		On the other hand, taking $q\in L_{[0,\tau]}^1(\mV)$ in the second equation of \eqref{dual-problem3} gives
		\begin{equation*}
			\begin{aligned}
				\int_{0}^\tau\langle g_+,q(t)\rangle_{\mQ_{+}'\times \mQ} dt&= \int_0^\tau \bigg[b(w(t),q(t)) -\lambda (q(t),m(t))_\mQ\bigg] dt.
			\end{aligned}
		\end{equation*}
		Now, we set $v=u-u_h$ and $q=p-p_h$ in order to obtain
		\begin{equation}
			\label{dual-teo3-003}
	\begin{aligned}
				\int_{0}^\tau\langle f_+,\eu(t)\rangle_{\mV_{+}'\times \mV} dt&=\int_0^\tau\bigg\{a(\eu(t),w(t))+b(\eu(t),m(t)) \\
			&\hspace{-2cm}- \int_{0}^{t}\bigg[k_1(t,s)a\big(\eu(s),w(t)\big)+k_2(t,s)b\big(\eu(s),m(t)\big) \bigg]ds\bigg\}dt,
	\end{aligned}
		\end{equation}
	and
	\begin{equation}
		\label{dual-teo3-004}
		\int_{0}^{\tau}\langle g_+,\ep(t)\rangle_{\mQ_{+}'\times \mQ}dt=\int_{0}^{\tau}\bigg[b(w(t),\ep(t)) -\lambda (\ep(t),m(t))_\mQ\bigg]dt.
		\end{equation}
		If $f_+(t)=\eu(t)\Vert \eu(t)\Vert_{\mV_{-}}^{-1}$, then $\Vert f_+\Vert_{\mV_{-}}=1$ and $\langle f_+,\eu\rangle_{\mV_{+}'\times \mV_{-}}=\Vert \eu\Vert_{\mV_{-}}.$ This result applied on \eqref{dual-teo3-003} yields to
		\begin{equation*}
			\begin{aligned}
				\int_{0}^\tau\Vert &\eu(t)\Vert_{\mV_{-}}dt=\int_0^\tau\bigg\{a(\eu(t),w(t))+b(\eu(t),m(t)) \\
				&\hspace{1cm}-\int_{0}^{t}\bigg[k_1(t,s)a(\eu(s),w(t))+k_2(t,s)b(\eu(s),m(t)) \bigg]ds\bigg\}dt.
			\end{aligned}
		\end{equation*}
		Similarly, if $g_+(t)=z(t)\ep(t)\Vert \ep(t)\Vert_{\mQ_{-}}^{-1}$, with  
		\begin{equation*}
			z(t)=1+\int_0^tk_2(t,s)\bigg[b(w_h(t),\ep(s))-\lambda(\ep(s),m_h(t))_\mQ\bigg]ds\Vert \ep(t)\Vert_{\mQ_{-}}^{-1},
		\end{equation*}
		then $\Vert g_+(t)\Vert_{\mQ_{-}}=\vert z(t)\vert$. Replacing this in  \eqref{dual-teo3-004},  yields to
		\begin{equation*}
			\begin{aligned}
				\label{dual-teo3-006}
				&\int_{0}^{\tau}\Vert \ep(t)\Vert_{\mQ_{-}}dt= \int_{0}^{\tau}\bigg\{b(w(t),\ep(t))-\lambda(\ep(t),m(t))_\mQ\\
				&\hspace{2cm}- \int_0^tk_2(t,s)\bigg[b(w_h(t),\ep(s))-\lambda(\ep(s),m_h(t))_\mQ\bigg]ds\bigg\}dt.
			\end{aligned}
		\end{equation*}
		But from \eqref{semi-discrete-perturbed-1} we have that
		\begin{equation*}
			\left\{
			\begin{aligned}
				a(\eu,w_h) + b(w_h,\ep) &= \int_{0}^{t}k(t,s)\bigg[a(\eu(s),w_h) + b(w_h,\ep(s)) \bigg]ds,\\
				b(\eu,m_h) -\lambda(\ep,m_h)_{\mQ} &=0,
			\end{aligned}
			\right.
		\end{equation*}
		for all $w_h\in \mV_h$ and for all $m_h\in \mQ_h$. Hence, using the continuity of the bilinear forms, the boundedness of the kernels, H\"older's inequality, and \cite[Lemma 3]{shaw2001optimal}, we obtain 
		\begin{equation*}
			\label{dual-teo3-007}
			\begin{aligned}
				&\int_{0}^\tau\bigg(\Vert \eu\Vert_{\mV_{-}}+\Vert \ep\Vert_{\mQ_{-}}\bigg)dt=\int_0^\tau\bigg\{a(\eu,\ew)+b(\eu,\emm) +b(\ew,\ep)  \\
				&\hspace{0.2cm}-\lambda(\ep,\emm)_\mQ- \int_{0}^{t}\bigg[k_1(t,s)a(\eu(s),\ew)+k_2(t,s)b(\eu(s),\emm)\bigg]ds\bigg\}dt\\
				&\leq \int_{0}^\tau\bigg\{\Vert a \Vert\,\Vert \eu\Vert_{\mV}\Vert \ew\Vert_{\mV}+\Vert b\Vert \,\Vert \emm\Vert_{\mQ}\Vert\eu\Vert_{\mV} +  \Vert b\Vert \,\Vert \ew\Vert_{\mQ}\Vert\ep\Vert_{\mQ}   \\
				&\hspace{0.2cm}+ \lambda\Vert \ep\Vert_{\mQ}\Vert\emm\Vert_{\mQ}+C_k\int_{0}^t\bigg[\Vert a\Vert\,\Vert\eu(s)\Vert_{\mV}\Vert \ew\Vert_{\mV}+ \Vert b\Vert\,\Vert\eu(s)\Vert_{\mV}\Vert \emm\Vert_{\mQ}\bigg]ds \bigg\}dt\\
				&\leq M\bigg(\Vert \eu\Vert_{L_{[0,\tau]}^1(\mV)}+\Vert \ep\Vert_{L_{[0,\tau]}^1(\mQ)}\bigg)\bigg(\Vert\ew\Vert_{L_{[0,\tau]}^{\infty}(\mV)}+\Vert\emm\Vert_{L_{[0,\tau]}^{\infty}(\mQ)}\bigg),
			\end{aligned}
		\end{equation*}
		where $M:=\max\{\Vert a\Vert,\Vert b\Vert,\lambda\}(1+TC_{k})$, $C_k:=\max\{C_{k_1},C_{k_2} \}$. On the other hand, from Hypothesis \ref{dual-hypo1-goal-prob1} and the definition of $r(h)$ and $n(h)$, we have that there exists a constant $\widehat{C}$ such that
		$$\Vert\ew\Vert_{L_{[0,\tau]}^{\infty}(\mV)}+\Vert\emm\Vert_{L_{[0,\tau]}^{\infty}(\mQ)}\leq \widehat{C}\,\bigg[r(h)+n(h)\bigg]\bigg(\Vert f_+\Vert_{L_{[0,\tau]}^{\infty}(\mV_{+}')}+\Vert g_+\Vert_{L_{[0,\tau]}^{\infty}(\mQ_{+}')}\bigg).$$
		Then, we have that 
		\begin{equation}
			\label{dual01}
			\begin{aligned}
				&\Vert \eu\Vert_{L_{[0,\tau]}^1(\mV_{-})}+\Vert \ep\Vert_{L_{[0,\tau]}^1(\mQ_{-})}\\
				&\hspace{0.5cm}\leq \widetilde{C}\,\bigg(\Vert f_+\Vert_{L_{[0,\tau]}^{\infty}(\mV_{+}')}+\Vert g_+\Vert_{L_{[0,\tau]}^{\infty}(\mQ_{+}')}\bigg)\bigg(\Vert \eu\Vert_{L_{[0,\tau]}^1(\mV)}+\Vert \ep\Vert_{L_{[0,\tau]}^1(\mQ)}\bigg),
			\end{aligned}
		\end{equation}
		where $\widetilde{C}:=M\widehat{C}\bigg[r(h)+n(h)\bigg]
		$. We conclude the proof taking $\tau=T,\,\Vert f\Vert_{\mV_{+}'}=\Vert g_+\Vert_{\mQ_{+}'}=1$ and applying Theorem \ref{teo-sd-perturbed-main} to the semi-discrete error estimates
		for $u$ and $p$ in the right side of \eqref{dual01}.
	\end{proof}

	\section{Application to a  linear viscoelastic Timoshenko beam}
	\label{section:TIMO}
	In this section we will apply the abstract theory previously developed to a linear viscoelastic Timoshenko beam model. It is well known that, in the non viscoelastic case, the Timoshenko beam equations lead to a parameter dependent problem, where the thickness  plays the role of
	deteriorate the standard  numerical methods. In the viscoelastic setting this drawback is expectable, and our abstract framework will show that the mixed numerical methods avoid the locking effect for the viscoelastic mixed formulation of this beam.

	Let $\Omega:=[0,L]$ and $\mathcal{J}:=[0,T]$. We consider the space of square-integrable functions $L^2(\Omega)$ with inner product $(u,v)_{0,\Omega}=\int_{\Omega} u\,v\, dx$, and its induced  norm $\Vert f\Vert_{L^2(\Omega)}=\sqrt{(f,f)_{0,\Omega}}$. We denote by $H_0^1(\Omega)$ the subspace of $H^1(\Omega)$ that consist in all functions which together with their first derivative vanish at the ends of the interval $\Omega$. 
	
	The beam is assumed to be clamped, thus we consider the space
	\begin{equation*}
		\label{space-V-timoshenko1}
		\mathbb{H}=\bigg\{ (v\,,\eta\,)\in H_0^1(\Omega)\times H_0^1(\Omega) \bigg\},
	\end{equation*}
	endowed with the natural product space norm
	\begin{equation*}
		\Vert (\eta,w)\Vert_{\mathbb{H}}^2:=\Vert \eta'\Vert_{L^2(\Omega)}^2+\Vert w'\Vert_{L^2(\Omega)}^2.
	\end{equation*}

	The viscoelastic Timoshenko beam model to be analyzed is the linear version of the one studied in \cite{payette2010nonlinear}: Find $(w,\theta)\in L_{\mathcal{J}}^1(\mathbb{H})$ such that
	\begin{equation}
		\label{s-m-b-timoshenko1}
		\begin{aligned}
			&E(0)(I(x) \theta', \eta' )_{0,\Omega}+ k_sG(0)\big( A(x)(\theta-w'),\eta-v'\big)_{0,\Omega}=(\widetilde{q},v)_{0,\Omega}\\
			&+\int_0^t\dot{E}(t-s) (I(x)\theta'(s),\eta\,' )_{0,\Omega}\, ds\\&+k_s\int_0^t \dot{G}(t-s)\big(A(x) (\theta(s) -w'(s)),\eta\,-v' \big)_{0,\Omega}\, ds,\hspace{0.6cm} \forall (v\,,\eta\,)\in\mathbb{H},
		\end{aligned}
	\end{equation}
	where $w$ is the displacement of the beam, $\theta$ represent the rotations,  $k_s$ is the correction factor, $E(t)$ is the relaxation modulus,  $G(t):=E(t)/2(1+\nu)$ is the shear modulus,  $\nu$ is the time-independent Poisson ratio, $I(x)$ is the moment of inertia of the cross-section, $A(x)$ is the area of the cross-sectio,n and $\widetilde{q}(x,t)$ is a uniform distributed transverse given  load.
	
	It is well known that, in elastic beams, numerical locking arises when standard finite elements are used because most of the energy of the system will be given by the shear term $(\theta-w')$ and this is not physically correct (see \cite{arnold1981discretization,chapelle2010finite,hernandez2008approximation}). A usual approach to analyze this phenomenon is to rescale the formulation \eqref{s-m-b-timoshenko1} to identify a family of viscoelastic problems whose limit is well-posed when the thickness of the beam goes to zero. With this purpose, we introduce the following classic non-dimensional parameter, characteristic of the thickness of the beam
	\begin{equation*}
		\varepsilon^2=\frac{1}{L}\int_\Omega \frac{I(x)}{A(x)L^2}\,dx,
	\end{equation*}
	which is assumed to be independent of time and is such that  $\varepsilon\in(0,\varepsilon_{\max}]$.
	
	By scaling the load as $\widetilde{q}(x,t)=\varepsilon^3 q(x,t)$, with $q(x,t)$ independent of $\varepsilon$, and defining
	\begin{equation*}
		\hat{I}(x):=\frac{I(x)}{\varepsilon^3}, \qquad \hat{A}(x):=k_s\frac{A(x)}{\varepsilon},
	\end{equation*}
	we have that \eqref{s-m-b-timoshenko1} is equivalently written as follows: \begin{problem}
		\label{prob-timoshenko-2}
		Given $q\in L_{\mathcal{J}}^1(L^2(\Omega)\big)$, find $(\theta,w)\in L_\mathcal{J}^1(\mathbb{H})$ such that	
		\begin{equation*}
			\label{s-m-b-timoshenko4-qs2}
			\begin{aligned}
				(\hat{I} \theta'&, \eta' )_{0,\Omega}+ \frac{\varepsilon^{-2}}{2(1+\nu)}\big( \hat{A}(\theta-w'),\eta-v'\big)_{0,\Omega}=(q_E,v)_{0,\Omega}\\&+\int_0^t\frac{\dot{E}(t-s)}{E(0)}\left[  (\hat{I}\theta'(s),\eta\,' )_{0,\Omega}+\frac{\varepsilon^{-2}}{2(1+\nu)}\big(\hat{A} (\theta(s) -w'(s)) ,\eta\,-v' \big)_{0,\Omega}\right]\, ds,
			\end{aligned}
		\end{equation*}
		for all $(v\,,\eta\,)\in\mathbb{H}$, where $q_E=q/E(0)$.
	\end{problem}
	
	Assuming that there exist positive constants $\underline{C}_E,\overline{C}_E,\underline{C}_G,\overline{C}_G$ such that
	\begin{equation*}
		\overline{C}_E\geq E(0)\hat{I}(x)\geq\underline{C}_E,\quad
		\overline{C}_G\geq\frac{ E(0)}{2(1+\nu)}\hat{A}(x)\geq\underline{C}_G,
	\end{equation*}
	we have that, for each $\varepsilon>0$, the bilinear form from the left hand side of Problem \ref{prob-timoshenko-2} is continuous and elliptic in $\mathbb{H}$. Hence, the associated linear operator is invertible, and following \cite[Theorem 3.1 and Theorem 3.5]{gripenberg1990volterra}, we have that there exists a unique solution $(\theta,w)\in L_\mathcal{J}^1(\mathbb{H})$ of Problem \ref{prob-timoshenko-2} . Moreover, from \cite[Theorem 9]{shaw2001optimal} we conclude that there exists a positive constant $C>0$, such that 
	$$
	\Vert w\Vert_{L_{\mathcal{J}}^1(H^1(\Omega))} + \Vert \theta\Vert_{L_{\mathcal{J}}^1(H^1(\Omega))}\leq C\varepsilon^{-1}\Vert q_E\Vert_{L_\mathcal{J}^1(L^2(\Omega))}.
	$$

	For Problem \ref{prob-timoshenko-2} we define the unit elastic shear $\gamma\in L_{\mathcal{J}}^1(L^2(\Omega)\big)$ as
	\begin{equation}
		\label{gamma-timoshenko-beam}
		\gamma:=\frac{\varepsilon^{-2}}{2(1+\nu)}\hat{A}(\theta-w').
	\end{equation}
	Moreover, multiplying \eqref{gamma-timoshenko-beam} by a test function $\psi\in L^2(\Omega)$ and integrating in $\Omega$, we have 
	\begin{equation}
		\label{gamma-timoshenko-beam-weak}
		(\theta-w',\psi)_{0,\Omega}- \lambda (\gamma/\hat{A},\psi)_{0,\Omega}=0 \hspace{1cm} \forall \psi\in L^2(\Omega),
	\end{equation}
	where $\lambda=2(1+\nu)\varepsilon^2$. Hence, gathering Problem \ref{s-m-b-timoshenko4-qs2} and \eqref{gamma-timoshenko-beam-weak}, we obtain the following mixed formulation: \begin{problem}
		\label{prob-timoshenko-mixed1}	Find $(\theta,w,\gamma)\in L_\mathcal{J}^1(\mathbb{H}\times L^2(\Omega))$ such that
		\begin{equation*}
			\label{mixed-timoshenko-beam-1}
			\left\{\begin{aligned}
				&(\hat{I} \theta', \eta' )_{0,\Omega}+ \big( \gamma,\eta-v'\big)_{0,\Omega}=(q_E,v)_{0,\Omega}\\
				&\hspace{4cm}+\int_0^t\frac{\dot{E}(t-s)}{E(0)}\bigg[  \big(\hat{I}\theta'(s),\eta\,' \big)_{0,\Omega}+\big(\gamma(s),\eta-v' \big)_{0,\Omega}\bigg] ds,\\
				&(\theta-w',\psi)_{0,\Omega} - \lambda (\gamma/\hat{A},\psi)_{0,\Omega}=0,
			\end{aligned}\right.
		\end{equation*}
		for all $(v,\eta)\in\mathbb{H}$ and for all $\psi\in L^2(\Omega)$.
	\end{problem} 
	
	Now we will prove that Problem \ref{prob-timoshenko-mixed1}  lies in the framework of the abstract setting provided in the previous section. With this aim, we define $\mV:=\mathbb{H},\,\mQ:=L^2(\Omega)$, $u:=(\theta,w),\,v:=(\eta,v),p:=\gamma, q:=\psi$, the bilinear forms
	\begin{equation*}
		a\big((\theta,w);(\eta,v)\big):=(\hat{I}\theta',\eta')_{0,\Omega},\quad b\big((\eta,v);\gamma\big):=(\gamma,\eta-v')_{0,\Omega}
	\end{equation*}
	and $k(t,s)=\dot{E}(t-s)/E(0)$, for all $(\theta,w),(\eta,v)\in\mathbb{H}$, $\gamma\in L^2(\Omega)$ a.e. in $\mathcal{J}$. Clearly, the bilinear form $a(\cdot,\cdot)$ is positive semi-definite. On the other hand, we have that $\mK=\big\{(\eta,v)\in\mathbb{H}\;:\; v\in H_0^1(\Omega),\,\eta=v' \big\}.$ Hence, due to the Poincar\'e inequality, we conclude that
	$a(\cdot,\cdot)$ is elliptic over the space $\mK$. 
	
	On the other hand, from \cite[Section 5]{arnold1981discretization} we have that $b(\cdot,\cdot)$ satisfies an inf-sup condition. Then, we observe that Problem \ref{prob-timoshenko-mixed1} satisifes the hypotheses from Theorem \ref{teo-p_lambda}, implying the existence of a positive constant $C>0$, uniform in $\lambda$, such that
	\begin{equation*}
		\label{shear-formulation-001}
		\Vert (\theta,w)\Vert_{L_\mathcal{J}^1(\mathbb{H})} + \Vert \gamma\Vert_{L_\mathcal{J}^1(L^2(\Omega))}\leq C\Vert q_E\Vert_{L_\mathcal{J}^1(L^2(\Omega))}.
	\end{equation*}
	
	Here and thereafter, the positive constant $C$ represents, either uniform with respect to the thickness parameter or independent of it, which depends on the stability parameters (continuous or discrete), the spatial domain, the observation time and the nature of the material. To end the section, we note that the stability result
	\begin{equation}
		\label{perturbed-better-estimate}
		\begin{aligned}
			&\Vert \theta\Vert_{L_{\mathcal{J}}^ 1(H^{k}(\Omega))}+\Vert w\Vert_{L_{\mathcal{J}}^ 1(H^{k}(\Omega))} + \Vert \gamma\Vert_{L_{\mathcal{J}}^ 1(H^{k-1}(\Omega))}\leq C\Vert q_E\Vert_{L_{\mathcal{J}}^ 1(H^{k-2}(\Omega))},
		\end{aligned}
	\end{equation}
	holds, for $k\geq2$, with $H^0(\Omega)=L^2(\Omega)$.
	\subsection{Finite element analysis}
	\label{subsection4-1}
	In this section we analyze the finite element semi-discretization for the beam mixed formulation provided previously. The main goal is to derive error estimates, independent of the thickness parameter. As a starting point, consider a finite partition $\mathscr{T}_h=\big\{\Omega_i \big\}_{i=1}^{n}$  of the computational domain $\Omega$ such that $\Omega_i=]x_{i-1},x_i[$, with length $h_i=x_i-x_{i-1}$, and satisfying $\bigcap_{i=1}^n\Omega_i=\emptyset$ and  $\Omega=\bigcup_{i=1}^n\Omega_i$, $i=1,\dots,n$. The maximum interval length is denoted by $h=\max_{1\leq i \leq n}h_i$.
	
	The approximations in each case will be based in the following finite element spaces:
	\begin{align*}
		\mV_h^r&:=\bigg\{v\in H^1(\Omega)\;:\; v_{\vert \Omega_i}\in \mathbb{P}_r(\Omega_i),\, \Omega_i\in\mathscr{T}_h \bigg\},\\
		\mQ_h^r&:=\bigg\{q\in L^2(\Omega)\;:\; v_{\vert \Omega_i}\in \mathbb{P}_{r-1}(\Omega_i),\, \Omega_i\in\mathscr{T}_h \bigg\},
	\end{align*}
	where $\mathbb{P}_r$ represents the space of polynomials of degree $r\geq 1$ defined on each $\Omega_i$. Also, we introduce the $L^2-$ projection onto $\mQ_h^r$, defined by
	\begin{equation*}
		\begin{aligned}
			\Pi_h: &L^2(\Omega)\rightarrow \mQ_h^r\\
			&p\mapsto \Pi_h(p):=\widetilde{p},
		\end{aligned}
	\end{equation*}
	which satisfies $(p-\widetilde{p},q)_{0,\Omega}=0,$ for all $q\in \mQ_h^r$, and the following error estimate
	\begin{equation}
		\label{projector-l2-bound}
		\Vert p - \Pi_h(p)\Vert_{L^2(\Omega)}\leq Ch^{r}\Vert p\Vert_{H^{r}(\Omega)},
	\end{equation}
	is satisfied (see for example \cite[Section 1.6.3]{ern2013theory}). We also consider the Lagrange interpolant $\mathcal{L}_h:C(\overline{\Omega})\rightarrow \mV_h^r$ satisfying the error estimate (see for example \cite[Section 1.1.3]{ern2013theory}):
	\begin{equation}
		\label{lagrange-interpolant-r}
		\Vert v- \mathcal{L}_h(v)\Vert_{H^1(\Omega)}\leq Ch^r\Vert v\Vert_{H^{r+1}(\Omega)}.
	\end{equation} 
	
	Define $\mathbb{H}_h:=\mV_h^r\times \mV_h^r$ as a finite element subspace of  $\mathbb{H}$. Then, the corresponding semi-discrete counterpart of Problem \ref{prob-timoshenko-mixed1} is given as follows:
	
	\begin{problem}
		\label{prob-timoshenko-mixed1-discreto}	Find $(\theta_h,w_h,\gamma_h)\in L_\mathcal{J}^1(\mathbb{H}_h\times \mQ_h^r)$ such that
		\begin{equation*}
			\label{mixed-timoshenko-beam-1-discreto}
			\left\{\begin{aligned}
				&(\hat{I} \theta_h', \eta' )_{0,\Omega}+ \big( \gamma_h,\eta-v'\big)_{0,\Omega}=(q_E,v)_{0,\Omega}\\
				&\hspace{4cm}+\int_0^t\frac{\dot{E}(t-s)}{E(0)}\bigg[  \big(\hat{I}\theta_h'(s),\eta\,' \big)_{0,\Omega} +\big(\gamma_h(s),\eta-v' \big)_{0,\Omega}\bigg] ds,\\
				&(\theta_h-w_h',\psi)_{0,\Omega}- \lambda (\gamma_h/\hat{A},\psi)_{0,\Omega}= 0,\\
			\end{aligned}\right.
		\end{equation*}
		for all $(v,\eta)\in\mathbb{H}_h$ and for all $\psi\in \mQ_h^r$.
	\end{problem}

	Following \cite[Section 5]{arnold1981discretization}, we observe that the mixed formulation allows us to define the discrete kernel $\mK_h:=\left\{(\eta,v)\;:\;v'=\Pi_h\eta \right\}$ in order to conclude that the restriction of $a(\cdot,\cdot)$ to $\mathbb{H}_h$ satisfies the ellipticity condition in $\mK_h$, while the restriction  $b(\cdot,\cdot)$ to $\mathbb{H}_h\times\mQ_h^r$ satisfies an inf-sup condition. Thus, applying Theorem \ref{teo-sd-perturbed-main} we obtain that there exists a positive constant $C$, independent of $h$ and  $\lambda$, such that
	\begin{equation}
		\label{perturbed-001}
		\begin{aligned}
			\Vert (\theta,w)-&(\theta_h,w_h)\Vert_{L_\mathcal{J}^1(\mathbb{H})} + \Vert \gamma-\gamma_h\Vert_{L_\mathcal{J}^1(\mathbb{Q})}\\
			&\leq C\bigg(\inf_{(\eta,v)\in \mathbb{H}_h}\Vert(\theta,w)-(\eta,v)\Vert_{L_\mathcal{J}^1(\mathbb{H})}+\inf_{\psi\in \mQ_h^r}\Vert \gamma-\psi\Vert_{L_\mathcal{J}^1(\mathbb{Q})}\bigg).
		\end{aligned}
	\end{equation}
	
	Hence, we have the following convergence rate of the semi-discrete mixed Problem \ref{prob-timoshenko-mixed1-discreto}.
	\begin{prop}
		\label{prop:mixed-timoshenko-convergence1}
		Let $(\theta,w,\gamma)\in L_{\mathcal{J}}^ 1(\mathbb{H\times Q})$ be the solution of Problem \ref{prob-timoshenko-mixed1} and $(\theta_h,w_h,\gamma_h)\in L_\mathcal{J}^1(\mathbb{H}_h\times \mathbb{Q}_h)$. Then, if $q_E\in L_\mathcal{J}^1(H^r(\Omega))$, there exists a constant $C>0$, independent of $h$ and $\lambda$, such that
		\begin{equation*}
			\label{prop-mixed1-001}
			\Vert (\theta,w)-(\theta_h,w_h)\Vert_{L_\mathcal{J}^1(\mathbb{H})} + \Vert \gamma-\gamma_h\Vert_{L_\mathcal{J}^1(\mathbb{Q})}\leq Ch^r\Vert q_E\Vert_{L_\mathcal{J}^1(H^r(\Omega))}.
		\end{equation*}
	\end{prop}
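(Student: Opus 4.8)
The plan is to read Proposition~\ref{prop:mixed-timoshenko-convergence1} as a direct corollary of the $\lambda$-uniform quasi-best approximation estimate \eqref{perturbed-001}, which was obtained by applying Theorem~\ref{teo-sd-perturbed-main} to the beam formulation. Since the constant in \eqref{perturbed-001} is already independent of $h$ and $\lambda$, the entire task reduces to bounding the two best-approximation errors $\inf_{(\eta,v)\in\mathbb{H}_h}\Vert(\theta,w)-(\eta,v)\Vert_{L_\mathcal{J}^1(\mathbb{H})}$ and $\inf_{\psi\in\mQ_h^r}\Vert\gamma-\psi\Vert_{L_\mathcal{J}^1(L^2(\Omega))}$ by an $O(h^r)$ quantity, and then converting the resulting Sobolev norms of the exact solution into a norm of the data $q_E$.

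First I would discard the infima in favour of explicit interpolants: take $(\eta,v)=(\mathcal{L}_h\theta,\mathcal{L}_h w)\in\mathbb{H}_h$ for the kinematic pair and $\psi=\Pi_h\gamma\in\mQ_h^r$ for the shear. Because $\mathcal{L}_h$ preserves the clamped nodal values, the interpolant indeed lies in $\mathbb{H}_h$; and since $r\ge1$ forces $\theta,w\in H^{r+1}(\Omega)\subset C(\overline{\Omega})$ in one dimension, the Lagrange interpolant is well defined a.e. in $\mathcal{J}$. Applying \eqref{lagrange-interpolant-r} componentwise and \eqref{projector-l2-bound}, and integrating the pointwise-in-time estimates over $\mathcal{J}$, I obtain
\begin{equation*}
\inf_{(\eta,v)\in\mathbb{H}_h}\Vert(\theta,w)-(\eta,v)\Vert_{L_\mathcal{J}^1(\mathbb{H})}\le Ch^r\Big(\Vert\theta\Vert_{L_\mathcal{J}^1(H^{r+1}(\Omega))}+\Vert w\Vert_{L_\mathcal{J}^1(H^{r+1}(\Omega))}\Big),
\end{equation*}
\begin{equation*}
\inf_{\psi\in\mQ_h^r}\Vert\gamma-\psi\Vert_{L_\mathcal{J}^1(L^2(\Omega))}\le Ch^r\Vert\gamma\Vert_{L_\mathcal{J}^1(H^{r}(\Omega))}.
\end{equation*}

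It then remains to absorb the higher-order Sobolev norms into the data. Here I would invoke the stability/regularity estimate \eqref{perturbed-better-estimate} with $k=r+1$ (legitimate since $r\ge1$ gives $k\ge2$), which yields
\begin{equation*}
\Vert\theta\Vert_{L_\mathcal{J}^1(H^{r+1}(\Omega))}+\Vert w\Vert_{L_\mathcal{J}^1(H^{r+1}(\Omega))}+\Vert\gamma\Vert_{L_\mathcal{J}^1(H^{r}(\Omega))}\le C\Vert q_E\Vert_{L_\mathcal{J}^1(H^{r-1}(\Omega))},
\end{equation*}
with $C$ uniform in $\lambda$. Combining this with the two interpolation bounds, feeding them into \eqref{perturbed-001}, and using the continuous embedding $H^r(\Omega)\hookrightarrow H^{r-1}(\Omega)$ to replace $\Vert q_E\Vert_{L_\mathcal{J}^1(H^{r-1}(\Omega))}$ by $\Vert q_E\Vert_{L_\mathcal{J}^1(H^{r}(\Omega))}$ delivers exactly the claimed $O(h^r)$ rate.

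The argument is essentially a bookkeeping assembly, so I do not expect a genuine obstacle; the only points requiring care are (i) matching the Sobolev indices so that the regularity demanded by \eqref{lagrange-interpolant-r}--\eqref{projector-l2-bound} is precisely what \eqref{perturbed-better-estimate} supplies, and (ii) verifying that no constant secretly depends on $\lambda$. The latter is guaranteed because both \eqref{perturbed-001} and \eqref{perturbed-better-estimate} carry $\lambda$-independent constants, so their product inherits the same uniformity, which is the whole point of the locking-free claim.
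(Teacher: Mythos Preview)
Your proposal is correct and follows essentially the same route as the paper: invoke the $\lambda$-uniform quasi-best approximation bound \eqref{perturbed-001}, estimate the infima via $\mathcal{L}_h$ and $\Pi_h$ using \eqref{lagrange-interpolant-r}--\eqref{projector-l2-bound}, and close with the regularity estimate \eqref{perturbed-better-estimate} at $k=r+1$. Your extra remark about the embedding $H^r(\Omega)\hookrightarrow H^{r-1}(\Omega)$ is even a bit more careful than the paper, which simply writes ``the desired estimate follows from \eqref{perturbed-better-estimate}.''
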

	\begin{proof}
		From \eqref{perturbed-001} and the error estimates for $\mathcal{L}_h$ and $\Pi_h$ in \eqref{lagrange-interpolant-r} and \eqref{projector-l2-bound}, respectively, we obtain  
		\begin{equation*}
			\begin{aligned}
				&\Vert (\theta,w)-(\theta_h,w_h)\Vert_{L^1(\mathcal{J};\mathbb{H})} + \Vert \gamma-\gamma_h\Vert_{L^1(\mathcal{J};\mathbb{Q})}\\
				&\hspace{1cm}\leq C\bigg(\Vert(\theta,w)-(\mathcal{L}_h(\eta_h),\mathcal{L}_h( v_h))\Vert_{L^1(\mathcal{J};\mathbb{H})}+\Vert \gamma-\Pi_h(\psi_h)\Vert_{L^1(\mathcal{J};\mathbb{Q})}\bigg)\\
				&\hspace{1cm}\leq Ch^r\bigg(\Vert \theta\Vert_{L^1(\mathcal{J};H^{r+1}(\Omega))}+ \Vert w\Vert_{L^1(\mathcal{J};H^{r+1}(\Omega))}+\Vert \gamma\Vert_{L^1(\mathcal{J};H^r(\Omega))}\bigg).
			\end{aligned}
		\end{equation*}
		Then, the desired estimate follows from \eqref{perturbed-better-estimate}.  
	\end{proof}
	In what follows, we will consider the dual-backward version of Problem \ref{prob-timoshenko-mixed1-discreto} to obtain an additional error estimate. Note that the estimate for $\gamma$ can not be improved since the choice of a space less regular that $L^2(\Omega)$ is not available.
	
	\begin{prop}
		Under the assumptions of Proposition \ref{prop:mixed-timoshenko-convergence1}, there exists a constant $C>0$, independent of $h$ and $\lambda$, such that
		\begin{equation*}
			\label{prop-mixed1-002}
			\Vert (\theta,w)-(\theta_h,w_h)\Vert_{L_\mathcal{J}^1(L^2(\Omega))}\leq C h^{r+1}\Vert q_E\Vert_{L_\mathcal{J}^1(\,H^{r}(\Omega))}.
		\end{equation*}
	\end{prop}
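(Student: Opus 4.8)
The plan is to apply the duality estimate of Theorem \ref{dual-teo3-goal-prob} with a judicious choice of the scale of spaces, so that the negative-norm error in the displacement and rotation gains one extra power of $h$ over the energy estimate of Proposition \ref{prop:mixed-timoshenko-convergence1}. Concretely, I would take $\mV_{-}:=L^2(\Omega)\times L^2(\Omega)$ as the weaker space for $u=(\theta,w)$, while for the shear variable I keep $\mQ_{-}:=\mQ=L^2(\Omega)$, since, as noted in the remark preceding the statement, no space less regular than $L^2(\Omega)$ is available for $\gamma$. For the more regular spaces I would set $\mV_{++}:=H^2(\Omega)\times H^2(\Omega)$ and $\mQ_{++}:=H^1(\Omega)$. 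Because $L^2(\Omega)$ is its own dual, the data classes in Hypothesis \ref{dual-hypo1-goal-prob1} become $\mV_{+}'=L^2(\Omega)\times L^2(\Omega)$ and $\mQ_{+}'=L^2(\Omega)$.

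The first task is to verify Hypothesis \ref{dual-hypo1-goal-prob1} for this choice. The dual-backward problem \eqref{dual-problem3} associated with Problem \ref{prob-timoshenko-mixed1} has exactly the same spatial bilinear forms $a(\cdot,\cdot)$ and $b(\cdot,\cdot)$, which are symmetric, the same bounded kernel with its time arguments reversed, and $L^2$ right-hand sides. Hence it is again a viscoelastic Timoshenko mixed formulation of the type already analyzed, and the elliptic regularity shift \eqref{perturbed-better-estimate} with $k=2$ applies to it, yielding a solution $(w,m)\in L_{[0,\tau]}^{\infty}(\mV_{++}\times\mQ_{++})$ together with the bound required by the hypothesis, with a constant $\widehat{C}$ independent of the thickness parameter $\lambda$. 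This is the step I expect to be the most delicate, since one must argue that the regularity constant is genuinely uniform in $\lambda$ and that the backward-in-time Volterra structure does not spoil the spatial regularity; both follow from the fact that \eqref{perturbed-better-estimate} is $\lambda$-uniform and that the kernel is bounded.

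Next I would estimate the two approximation quantities entering Theorem \ref{dual-teo3-goal-prob}. For $r(h)$ I use the Lagrange interpolant and \eqref{lagrange-interpolant-r} in the lowest-order case, so that $\Vert w-\mathcal{L}_h w\Vert_{\mathbb{H}}\leq Ch\Vert w\Vert_{\mV_{++}}$ for every $w\in\mV_{++}$, whence $r(h)\leq Ch$; for $n(h)$ I use the $L^2$-projection $\Pi_h$ and \eqref{projector-l2-bound} with $r=1$, giving $\Vert m-\Pi_h m\Vert_{L^2(\Omega)}\leq Ch\Vert m\Vert_{H^1(\Omega)}$ and thus $n(h)\leq Ch$. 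Consequently $r(h)+n(h)\leq Ch$, which triggers the sharper conclusion of Theorem \ref{dual-teo3-goal-prob}; dropping the nonnegative $\gamma$-contribution on the left it reads
\begin{equation*}
\Vert (\theta,w)-(\theta_h,w_h)\Vert_{L_\mathcal{J}^1(L^2(\Omega)\times L^2(\Omega))}\leq Ch\left(\inf_{(\eta,v)\in\mathbb{H}_h}\Vert(\theta,w)-(\eta,v)\Vert_{L_\mathcal{J}^1(\mathbb{H})}+\inf_{\psi\in\mQ_h^r}\Vert\gamma-\psi\Vert_{L_\mathcal{J}^1(L^2(\Omega))}\right),
\end{equation*}
with $C$ independent of $h$ and $\lambda$.

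Finally I would bound the best-approximation errors on the right exactly as in the proof of Proposition \ref{prop:mixed-timoshenko-convergence1}: combining \eqref{lagrange-interpolant-r} and \eqref{projector-l2-bound} in the order-$r$ case with the stability estimate \eqref{perturbed-better-estimate} for $k=r+1$ yields the bound $Ch^r\Vert q_E\Vert_{L_\mathcal{J}^1(H^r(\Omega))}$ for the sum of the two infima. Substituting this into the displayed inequality produces the claimed estimate $Ch^{r+1}\Vert q_E\Vert_{L_\mathcal{J}^1(H^r(\Omega))}$. The $\lambda$-independence is inherited at each stage: from the $\lambda$-uniform duality constant of Theorem \ref{dual-teo3-goal-prob}, from the $\lambda$-uniform regularity \eqref{perturbed-better-estimate}, and from the purely geometric nature of $r(h)$ and $n(h)$.
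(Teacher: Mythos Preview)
Your approach is essentially the same as the paper's: verify Hypothesis~\ref{dual-hypo1-goal-prob1} for the dual-backward Timoshenko problem and then combine Theorem~\ref{dual-teo3-goal-prob} with Proposition~\ref{prop:mixed-timoshenko-convergence1}. Your explicit identification of the spaces $\mV_{-},\mV_{++},\mQ_{-},\mQ_{++}$ and your computation of $r(h)+n(h)\leq Ch$ via \eqref{lagrange-interpolant-r} and \eqref{projector-l2-bound} are exactly what the paper leaves implicit.

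One technical point the paper handles that you gloss over: Hypothesis~\ref{dual-hypo1-goal-prob1} requires the dual solution in $L^{\infty}_{[0,\tau]}$, not $L^{1}$, whereas the regularity shift \eqref{perturbed-better-estimate} you invoke is stated in $L^{1}$ norms. The paper deals with this by rewriting the backward problem in forward form via the change of variables $\xi=\tau-t$, $\chi=\tau-s$, and then appealing to an $L^{\infty}$ existence result for Volterra equations (their reference to \cite[Lemma~4]{shaw2001optimal}); the spatial $H^{2}\times H^{2}\times H^{1}$ regularity is then obtained from the underlying differential equations. Your sentence ``it is again a viscoelastic Timoshenko mixed formulation of the type already analyzed'' is correct in spirit, but to make the argument airtight you should indicate how the $L^{\infty}$-in-time bound is obtained rather than citing the $L^{1}$ estimate \eqref{perturbed-better-estimate} directly.
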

	\begin{proof}
		The dual-backward formulation of Problem \ref{prob-timoshenko-mixed1}, is stated as follows: For any $\tau\in\mathcal{J}$ and for any $q_{E+}\in L_{[0,\tau]}^\infty(L^2(\Omega))$, find $(\widetilde{w},\widetilde{\theta},\widetilde{\gamma})\in L_{[0,\tau]}^\infty(\mV\times L^2(\Omega))$, such that  for a.e. in $[0,\tau]$,
		\begin{equation}
			\label{dual-problem3_discrete}
			\left\{\begin{aligned}
				&a((\eta,v);(\widetilde{\theta}(t),\widetilde{w}(t)))+b((\eta,v);\widetilde{\gamma}(t))=(q_{E+}(t),v)_{0,\Omega}\\
				&\hspace{2.5cm}+\int_{t}^\tau k(s,t)\bigg[ a((\eta,v);(\widetilde{\theta}(s),\widetilde{w}(s)))+b((\eta,v);\gamma(s))\bigg]ds,\\
				&b((\widetilde{\theta}(t),\widetilde{w}(t));q) -\lambda (q,\widetilde{\gamma}(t)/\hat{A})_{0,\Omega}=0,
			\end{aligned}\right.
		\end{equation}
		for all $(\eta,v)\in \mV\times\mQ$. 
		
		Setting $\xi:=\tau - t$, $\chi=\tau - s$, and defining $\overline{w}(\cdot):=\widetilde{w}(\tau - \cdot),\;\overline{\theta}(\cdot):=\widetilde{\theta}(\tau - \cdot),\;\overline{\gamma}(\cdot):=\widetilde{\gamma}(\tau - \cdot),\; \overline{f}_{+}(\xi):=f_{+}(\tau -\xi)$, and $\overline{k}(\xi,\chi):=k(\tau-\chi,\tau - \xi),$ it follows that the backward problem can be written in forward form as: Find $(\overline{w},\overline{\theta},\overline{\gamma})\in L_{[0,\tau]}^\infty(\mV\times L^2(\Omega))$, such that for a.e. $\xi\in [0,\tau]$, 
		\begin{equation}
			\label{dual-problem3_discrete2}
			\left\{\begin{aligned}
				&a((\eta,v);(\overline{\theta}(\xi),\overline{w}(\xi)))+b((\eta,v);\overline{\gamma}(\xi))=(q_{E+}(\tau-\xi),v)_{0,\Omega}\\
				&\hspace{2.5cm}+\int_{0}^\xi \overline{k}(\xi,\chi)\bigg[ a((\eta,v);(\overline{\theta}(\chi),\overline{w}(\chi)))+b((\eta,v);\gamma(\chi))\bigg]d\chi,\\
				&b((\overline{\theta}(\xi),\overline{w}(\xi));q) -\lambda (q,\overline{\gamma}(\xi)/\hat{A})_{0,\Omega}=0,
			\end{aligned}\right.
		\end{equation}
		
		Now we need the existence and uniqueness of a triplet $(\overline{w},\overline{\theta},\overline{\gamma})\in L_{[0,\tau]}^\infty(\mV\times L^2(\Omega))$, solution of \eqref{dual-problem3_discrete2} (resp. of a triplet $(\widetilde{w},\widetilde{\theta},\widetilde{\gamma})\in L_{[0,\tau]}^\infty(\mV\times L^2(\Omega))$ solution of \eqref{dual-problem3_discrete}). Note that that we have the same bilinear forms as those of the abstract setting, hence the associated linear operator is invertible. Also, we have that $\overline{k}\in L^1(\mathcal{T})$ and $q_{E+}\in L_{[0,\tau]}^\infty(L^2(\Omega))$. Hence, from \cite[Lemma 4]{shaw2001optimal}, we obtain the existence and uniqueness of the required dual solutions. 
		To obtain the additional regularity in space, we use the differential equations satisfied by $\overline{w},\overline{\theta},\overline{\gamma}$ and use mathematical induction. 
		
		Therefore, the dual-backward problem of Problem \ref{prob-timoshenko-mixed1} satisfies the Hypothesis \ref{dual-hypo1-goal-prob1}, so the desired improved convergence follows from Proposition \ref{prop:mixed-timoshenko-convergence1} and Theorem \ref{dual-teo3-goal-prob}.
	\end{proof}

	\subsection{Numerical tests}
	\label{section5}
	In this section, we report  several numerical tests to check the mechanical behavior of the Timoshenko beam obtained using the finite element formulations developed above. The discrete problem is solved with Python scripts and the FEniCS project \cite{fenics2015}. We divide this section in quasi-static and dynamical analysis, where different Timoshenko beam scenarios are considered. 
	
	In the following tests,  the experimental nature of the relaxation modulus is replaced by assumed values of spring constants and viscosity parameters in order to consider the \emph{Standard Linear Solid model} (SLS). The relaxation and shear modulus for this material is given by the truncated Prony series:
	\begin{equation*}
		\label{relaxation-modulus-SLS}
		E(t)=\frac{k_1k_2}{k_1+k_2}+\bigg(k_1-\frac{k_1k_2}{k_1+k_2} \bigg)e^{-t/\tau},\qquad G(t)=\frac{E(t)}{2(1+\nu)},
	\end{equation*}
	where $\tau=\eta/(k_1+k_2)$. We will consider  $\nu=0.35$ in all the experiments.
	\subsection{Convergence and quasi-static response of several beams}
	\label{subsection5-1}

	Although the analysis provided predicts errors when using polynomials of degree at most $r-1$, the numerical experiments will be restricted to the two most used low order elements, i.e., linear and quadratic piecewise continuous elements.  In the first part, clamped and simply supported beams are considered using $\mathbb{P}_1$ and $\mathbb{P}_2$ elements to check the convergence order of the finite element method.	
	
	For the quasi-static case, we define the errors by
	\begin{equation*}
		\texttt{e}_0(\kappa):=\Vert \kappa-\kappa_h\Vert_{L_\mathcal{J}^1(L^2(\Omega))} \quad\text{ and }\quad \texttt{e}_1(\kappa):=\Vert \kappa- \kappa_h\Vert_{L_\mathcal{J}^1(H^1(\Omega))},
	\end{equation*}
	for every function $f$. 
	This allows to define the experimental rate of convergence  $\texttt{r}_i(\cdot)$ as
	\begin{equation*}
		\texttt{r}_i(\cdot):=\frac{\log\big(\texttt{e}_i(\cdot)/\texttt{e}_i'(\cdot)\big)}{\log(h/h')},\;\; i=0,1,
	\end{equation*}
	where $\texttt{e}_i$ and $\texttt{e}_i'$ denote two consecutive relative errors and $h$ and $h'$ their corresponding mesh sizes. Also, let $\text{DOF}:=\dim (\mathbb{H}_h)$ be the degrees of freedom in the $h-$discretization.
	
	It is well known that the trapezoidal rule error is of order $2$. Thus, from the semi-discrete error estimates, we have that given a semi-discrete rate of convergence $\mathcal{O}(h^r)$, we expect that the fully discrete error estimates satisfies
	\begin{align}
		&\texttt{e}_0(\cdot)\leq C(h^{r+1}+\Delta t^2), &\texttt{e}_1(\cdot)\leq C(h^r+\Delta t^2),\label{errorh1-w-theta}
	\end{align}
	where $C$ is a constant that depends of $\Omega$, the step size $\Delta t$, the observation time $T$ and the material creep response, but not on the thickness of the beam (see \cite{hernandez2008approximation} for more details). 
	
	Hence, the experiments will consider a sufficiently small step size such that the estimates \eqref{errorh1-w-theta} obeys the $\mathcal{O}(h^r)$ rate only. Numerical examples in \cite{payette2010nonlinear} support this claim by performing several test to associate a small size step with an accurate solution, whether the method applied is convergent.
	
	We borrow the material considered in \cite{martin2016modified} and consider an homogeneous rectangular beam of length $L=4\,m$, with base $b=0.08\,m$ and thickness $d$. The corresponding moment of inertia is $I=0.08\,d^3/12$m$^4$ and the area is $A=0.08\,d$\,m$^2$. The corresponding thickness parameter is set to be $\varepsilon^2=I/AL^2$. The beam is subjected to a creep load $q(t)=8\,H(t)$\,N/m. This case considers the SLS parameters $k_1=9.8\times10^{7}$\,N/m$^2$, $k_2=2.44\times10^{7}$\,N/m$^2$ and $\eta=2.74\cdot 10^8$\,N$\cdot$ s/m$^2$.  The observation time is $10\,s$ with a step size $\Delta t$ depending on the elements used so that it is considerably smaller than any mesh size selected in the test. We subdivide this experiment in two boundary condition cases. The first case is a fully clamped boundary condition in order to test the locking-free nature of the method proposed. The second case aims to check if the method is also locking free in other boundary conditions.
	
	\subsubsection{Clamped beam}
	\label{subsubsection5-1-1}
	Now we report numerical results for a clamped viscoelastic beam. From Table \ref{table1:error_l2_w-clamped} to Table \ref{table2:error_h1_theta-clamped} we observe that the predicted convergence rates are reached for the elements used. For instance, we show in Figure \ref{fig:qs_ex_0_exacta vs reducida-clamped} the differences in the convergence when using different techniques to solve the discrete problem with the thinnest beam studied. The reduced integration gives $\mathcal{O}(h^r)$ and $\mathcal{O}(h^{r+1})$ rates, with $r=1$ for $\mathbb{P}_1$ and $r=2$ for $\mathbb{P}_2$ elements, while a total loss of convergence is observed when the exact integration and $\mathbb{P}_1$ are used. Moreover, from Figure \ref{fig:qs_ex_0_exacta vs reducida-clamped}(c) we observe that exact integration with $\mathbb{P}_2$ elements gives a convergence rate of $\mathcal{O}(h^{2})$ for $\texttt{e}_i(w),i=0,1$ and $\texttt{e}_0(\theta)$, while  $\mathcal{O}(h)$ is observed for $\texttt{e}_1(\theta)$. This is suboptimal with respect to the predicted rate of convergence, similar to that of an elastic beam.
	It is worth noting that, beside the error behavior of $\texttt{e}_1(w)$, the slope of the errors with exact integration and $\mathbb{P}_2$ elements is very similar to that of reduced integration and $\mathbb{P}_1$ elements.
		\begin{table}[!ht]
		\caption{$L_\mathcal{J}^1(L^2(\Omega))$ error values and rate of convergence of the transverse displacement $w$ in a fully clamped viscoelastic beam with $\eta=2.74$\,N$\cdot$s$/$m$^2$. The observation time is $T=10\,s$ with $5000$ and $32000$ time steps when using linear and quadratic elements, respectively.}
		\label{table1:error_l2_w-clamped}
		\tabcolsep=2pt\begin{tabular}{AACCCCCCCC}
			\hline
			DOF&$h$&\multicolumn{2}{c}{$d=10^{-1}\mbox{m}$}&&\multicolumn{2}{c}{$d=10^{-2}\mbox{m}$}&&\multicolumn{2}{c}{$d=10^{-3}\mbox{m}$}\\\cline{3-4}\cline{6-7}\cline{9-10}
			&&\texttt{e}_0(w)&\texttt{r}_0(w)&&\texttt{e}_0(w)&\texttt{r}_0(w)&&\texttt{e}_0(w)&\texttt{r}_0(w)\\
			\multicolumn{2}{c}{$\mathbb{P}_1,\; \Delta t=2.00e-3$}\\
			\hline
			42 &0.2   &6.4146e-10  &--   & & 6.4076e-10 &--   &   &6.4075e-10  &--  \\
			82 &0.1   &1.6071e-10  &1.99 & &1.6054e-10  &1.99 &   &1.6053e-10  &1.99   \\
			162&0.05  &4.0196e-11 &1.99 &  &4.0152e-11  &1.99 &   &4.0152e-11 & 1.99  \\
			202&0.04  &2.5725e-11 &2.00 &  &2.5697e-11  &2.00 &   &2.5697e-11 & 2.00  \\
			242&0.03  &1.7863e-11 &2.00 &  &1.7844e-11  &2.00 &   &1.7844e-11 & 2.00  \\
			282&0.028 &1.3123e-11 &2.00 &  &1.3109e-11  &2.00 &   &1.3109e-11 & 2.00  \\
			\multicolumn{2}{c}{$\mathbb{P}_2,\; \Delta t=3.13e-4$}\\
			\hline
			82 &0.2   &5.6609e-12  &--   &  &5.6609e-12 &--   &   &5.6609e-12  &--  \\
			162 &0.1  &7.0766e-13  &2.99 &  &7.0765e-13 &2.99 &   &7.0765e-13  &2.99   \\
			322&0.05  &8.8526e-14  &2.99 &  &8.8525e-14 &2.99 &   &8.8525e-14 & 2.99  \\
			402&0.04  &4.5366e-14  &2.99 &  &4.5365e-14 &2.99 &   &4.5365e-14 & 2.99  \\
			482&0.03  &2.6289e-14  &2.99 &  &2.6288e-14 &2.99 &   &2.6288e-14 & 2.99  \\
			562&0.028 &1.6586e-14  &2.98 &  &1.6586e-14 &2.98 &   &1.6586e-14 & 2.98  \\
			\hline
		\end{tabular}
		
	\end{table}
	
	\begin{table}[!ht]
		\caption{$L_\mathcal{J}^1(H^1(\Omega))$ error data and rate of convergence of the transverse displacement $w$ in a fully clamped viscoelastic beam with $\eta=2.74$\,N$\cdot$s$/$m$^2$. The observation time is $T=10\,s$ with $5000$ and $32000$ time steps when using linear and quadratic elements, respectively.}
		\label{table2:error_h1_w-clamped}
		\tabcolsep=2pt\begin{tabular}{AACCCCCCCC}
			\hline
			DOF&$h$&\multicolumn{2}{c}{$d=10^{-1}\mbox{m}$}&&\multicolumn{2}{c}{$d=10^{-2}\mbox{m}$}&&\multicolumn{2}{c}{$d=10^{-3}\mbox{m}$}\\\cline{3-4}\cline{6-7}\cline{9-10}
			&&\texttt{e}_1(w)&\texttt{r}_1(w)&&\texttt{e}_1(w)&\texttt{r}_1(w)&&\texttt{e}_1(w)&\texttt{r}_1(w)\\
			\multicolumn{2}{c}{$\mathbb{P}_1,\; \Delta t=2.00e-3$}\\
			\hline
			42 &0.2   &3.7300e-9  &--   &  &3.7298e-9  &--   &   &3.7298e-9 &--  \\
			82 &0.1   &1.8425e-9  &1.01 &  &1.8424e-9  &1.01 &   &1.8424e-9 &1.01   \\
			162&0.05  &9.1841e-10 &1.00 &  &9.1840e-10 &1.00 &   &9.1840e-10 & 1.00  \\
			202&0.04  &7.3446e-10 &1.00 &  &7.3445e-10 &1.00 &   &7.3445e-10 & 1.00  \\
			242&0.03  &6.1192e-10 &1.00 &  &6.1191e-10 &1.00 &   &6.1191e-10 & 1.00 \\
			282&0.028 &5.2444e-10 &1.00 &  &5.2443e-10 &1.00 &   &5.2443e-10 & 1.00  \\
			\multicolumn{2}{c}{$\mathbb{P}_2,\; \Delta t=3.13e-4$}\\
			\hline
			82 &0.2   &1.8335e-10  &--   &  &1.8335e-10 &--   &   &1.8335e-10  &--  \\
			162 &0.1  &4.5854e-11  &1.99 &  &4.5854e-11 &1.99 &   &4.5854e-11  &1.99   \\
			322&0.05  &1.1464e-11  &1.99 &  &1.1464e-11 &1.99 &   &1.1464e-11 & 1.99  \\
			402&0.04  &7.3373e-12  &1.99 &  &7.3373e-12 &1.99 &   &7.3373e-12 & 1.99 \\
			482&0.03  &5.0954e-12  &1.99 &  &5.0954e-12 &1.99 &   &5.0954e-12 & 1.99  \\
			562&0.028 &3.7435e-12  &1.99 &  &3.7435e-12 &1.99 &   &3.7435e-12 & 1.99  \\
			\hline
		\end{tabular}
	\end{table}
	
	\begin{table}[!ht]
		\caption{$L_{\mathcal{J}}^1(L^2(\Omega)\big)$ error values and rate of convergence of the rotation $\theta$ in a fully clamped viscoelastic beam with $\eta=2.74$\,N$\cdot$s$/$m$^2$. The observation time is $T=10\,s$ with $5000$ and $32000$ time steps when using linear and quadratic elements, respectively.}
		\label{table1:error_l2_theta-clamped}
		\tabcolsep=2pt\begin{tabular}{AACCCCCCCC}
			\hline
			DOF&$h$&\multicolumn{2}{c}{$d=10^{-1}\mbox{m}$}&&\multicolumn{2}{c}{$d=10^{-2}\mbox{m}$}&&\multicolumn{2}{c}{$d=10^{-3}\mbox{m}$}\\\cline{3-4}\cline{6-7}\cline{9-10}
			&&\texttt{e}_0(\theta)&\texttt{r}_0(\theta)&&\texttt{e}_0(\theta)&\texttt{r}_0(\theta)&&\texttt{e}_0(\theta)&\texttt{r}_0(\theta)\\
			\multicolumn{2}{c}{$\mathbb{P}_1,\; \Delta t=2.00e-3$}\\
			\hline
			42 &0.2   &4.4892e-10  &--  &  &4.4892e-10 &--   &   &4.4892e-10 &--  \\
			82 &0.1   &1.1232e-10 &1.99 &  &1.1232e-10 &1.99 &   &1.1232e-10 &1.99   \\
			162&0.05  &2.8085e-11 &1.99 &  &2.8085e-11 &1.99 &   &2.8085e-11 & 1.99  \\
			202&0.04  &1.7973e-11 &2.00 &  &1.7973e-11 &2.00 &   &1.7973e-11 & 2.00  \\
			242&0.03  &1.2481e-11 &2.00 &  &1.2481e-11 &2.00 &   &1.2481e-11 & 2.00  \\
			282&0.028 &9.1692e-12 &2.00 &  &9.1692e-12 &2.00 &   &9.1692e-12 & 2.00  \\
			\multicolumn{2}{c}{$\mathbb{P}_2,\; \Delta t=3.13e-4$}\\
			\hline
			82 &0.2   &4.9025e-12  &--  &  &4.9025e-12 &--   &   &4.9025e-12 &--  \\
			162 &0.1  &6.1285e-13 &2.99 &  &6.1285e-13 &2.99 &   &6.1285e-13 &2.99   \\
			322&0.05  &7.6668e-14 &2.99 &  &7.6668e-14 &2.99 &   &7.6668e-14 & 2.99  \\
			402&0.04  &3.9289e-14 &2.99 &  &3.9289e-14 &2.99 &   &3.9289e-14 & 2.99  \\
			482&0.03  &2.2768e-14 &2.99 &  &2.2768e-14 &2.99 &   &2.2768e-14 & 2.99  \\
			562&0.028 &1.4365e-14 &2.98 &  &1.4365e-14 &2.98 &   &1.4365e-14 & 2.98  \\
			\hline
		\end{tabular}
		
	\end{table}
	
	\begin{table}[!ht]
		\caption{$L_{\mathcal{J}}^1(H^1(\Omega)\big)$ error data and rate of convergence for the rotation $\theta$ in a fully clamped viscoelastic beam. The observation time is $T=10\,s$ with $5000$ and $32000$ time steps, when using linear and quadratic elements, respectively.}
		\label{table2:error_h1_theta-clamped}
		\tabcolsep=2pt\begin{tabular}{AACCCCCCCC}
			\hline
			DOF&$h$&\multicolumn{2}{c}{$d=10^{-1}\mbox{m}$}&&\multicolumn{2}{c}{$d=10^{-2}\mbox{m}$}&&\multicolumn{2}{c}{$d=10^{-3}\mbox{m}$}\\\cline{3-4}\cline{6-7}\cline{9-10}
			&&\texttt{e}_1(\theta)&\texttt{r}_1(\theta)&&\texttt{e}_1(\theta)&\texttt{r}_1(\theta)&&\texttt{e}_1(\theta)&\texttt{r}_1(\theta)\\
			\multicolumn{2}{c}{$\mathbb{P}_1,\; \Delta t=2.00e-3$}\\
			\hline
			42 &0.2   &7.1136e-9 &--    &   &7.1136e-9 &--   &   &7.1136e-9  & --  \\
			82 &0.1   &3.5541e-9 &1.00  &   &3.5541e-9 &1.00 &   &3.5541e-9  & 1.00   \\
			162&0.05  &1.7767e-9 &1.00  &   &1.7767e-9 &1.00 &   &1.7767e-9  & 1.00  \\
			202&0.04  &1.4213e-9 &1.00  &   &1.4213e-9 &1.00 &   &1.4213e-9  & 1.00   \\
			242&0.03  &1.1844e-9 &1.00  &   &1.1844e-9 &1.00 &   &1.1844e-9  & 1.00   \\
			282&0.028 &1.0152e-9 &1.00  &   &1.0152e-9 &1.00 &   &1.0152e-9  & 1.00   \\
			\multicolumn{2}{c}{$\mathbb{P}_2,\; \Delta t=3.13e-4$}\\
			\hline
			82 &0.2   &1.5893e-10 &--   &  &1.5893e-10 &--   &   &1.5893e-10  &--  \\
			162 &0.1  &3.9719e-11 &2.00 &  &3.9719e-11 &2.00 &   &3.9719e-11 &2.00   \\
			322&0.05  &9.9290e-12 &2.00 &  &9.9290e-12 &2.00 &   &9.9290e-12 & 2.00  \\
			402&0.04  &6.3545e-12 &2.00 &  &6.3545e-12 &2.00 &   &6.3545e-12 & 2.00  \\
			482&0.03  &4.4128e-12 &2.00 &  &4.4128e-12 &2.00 &   &4.4128e-12 & 2.00  \\
			562&0.028 &3.2421e-12 &1.99 &  &3.2421e-12 &1.99 &   &3.2421e-12 & 1.99 \\
			\hline
		\end{tabular}
	\end{table}
	
	\begin{figure}[!ht]
		\begin{minipage}[c]{0.45\linewidth}
			\begin{center}
				\includegraphics[scale=0.35]{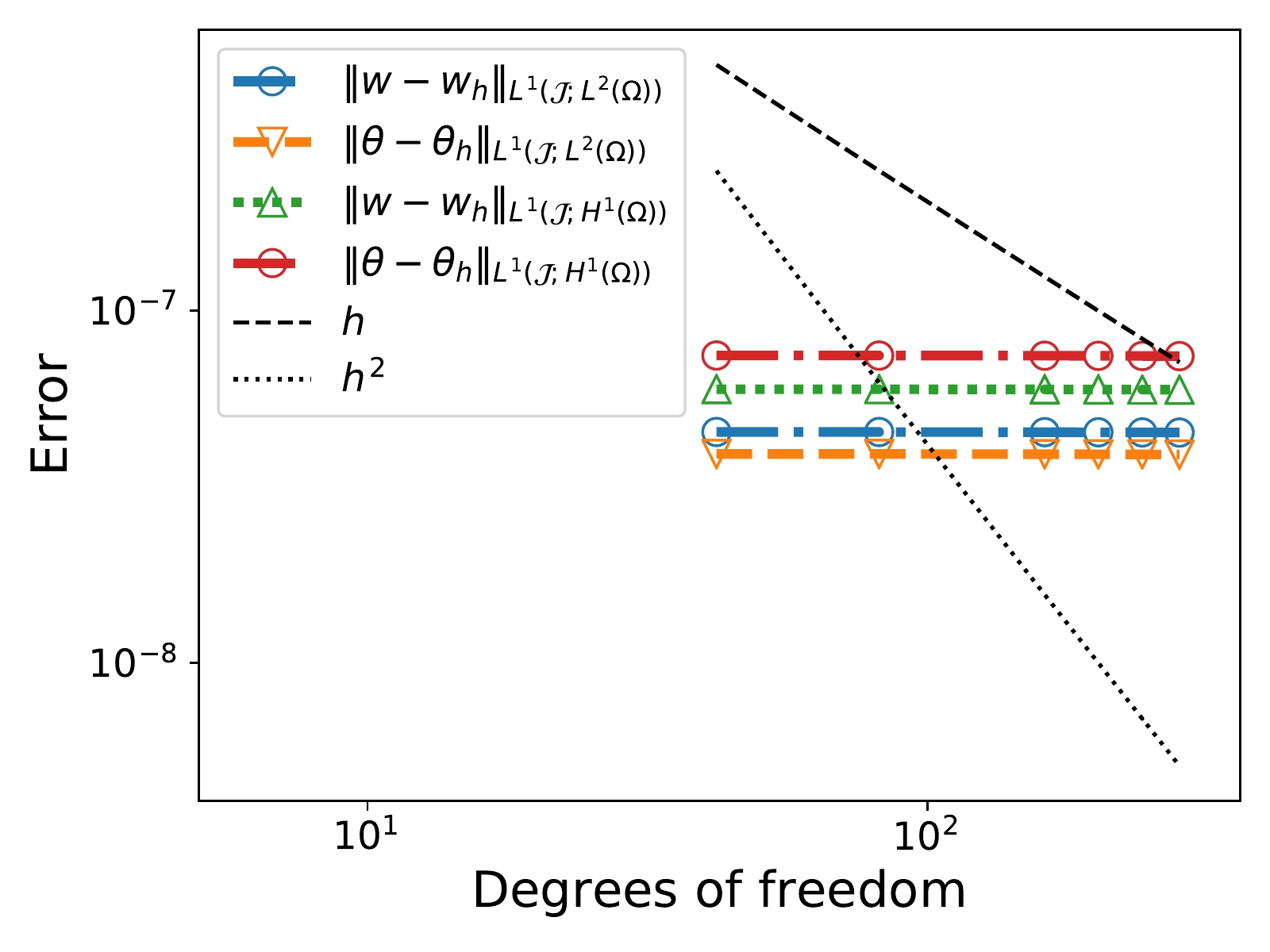}\\
				{\hspace{1cm}\footnotesize (a)}
			\end{center}
		\end{minipage}
		\begin{minipage}[c]{0.45\linewidth}
			\begin{center}
				\includegraphics[scale=0.35]{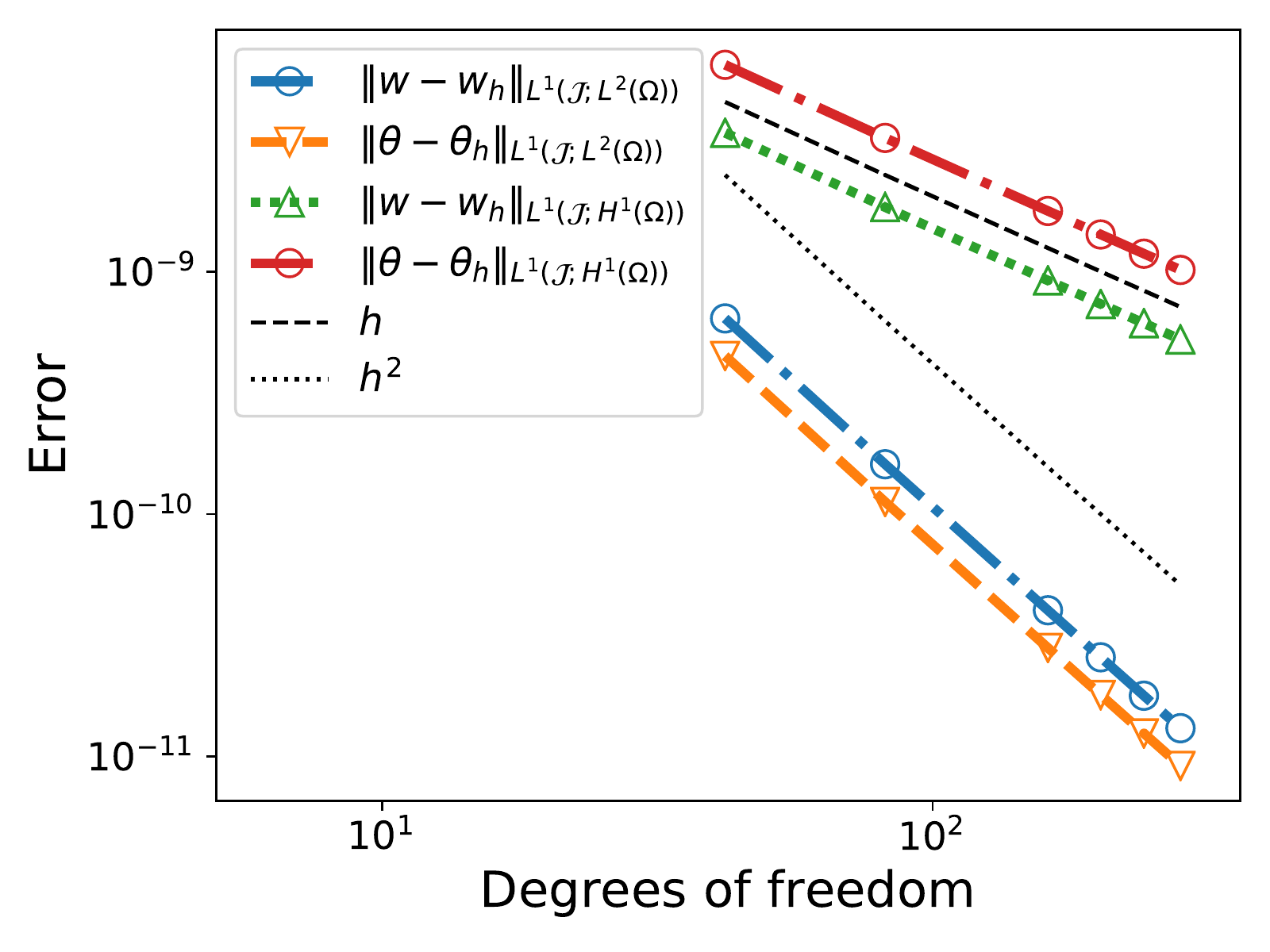}\\
				{\hspace{1cm}\footnotesize (b)}
			\end{center}
		\end{minipage}
		\begin{minipage}[c]{0.45\linewidth}
			\begin{center}
				\includegraphics[scale=0.35]{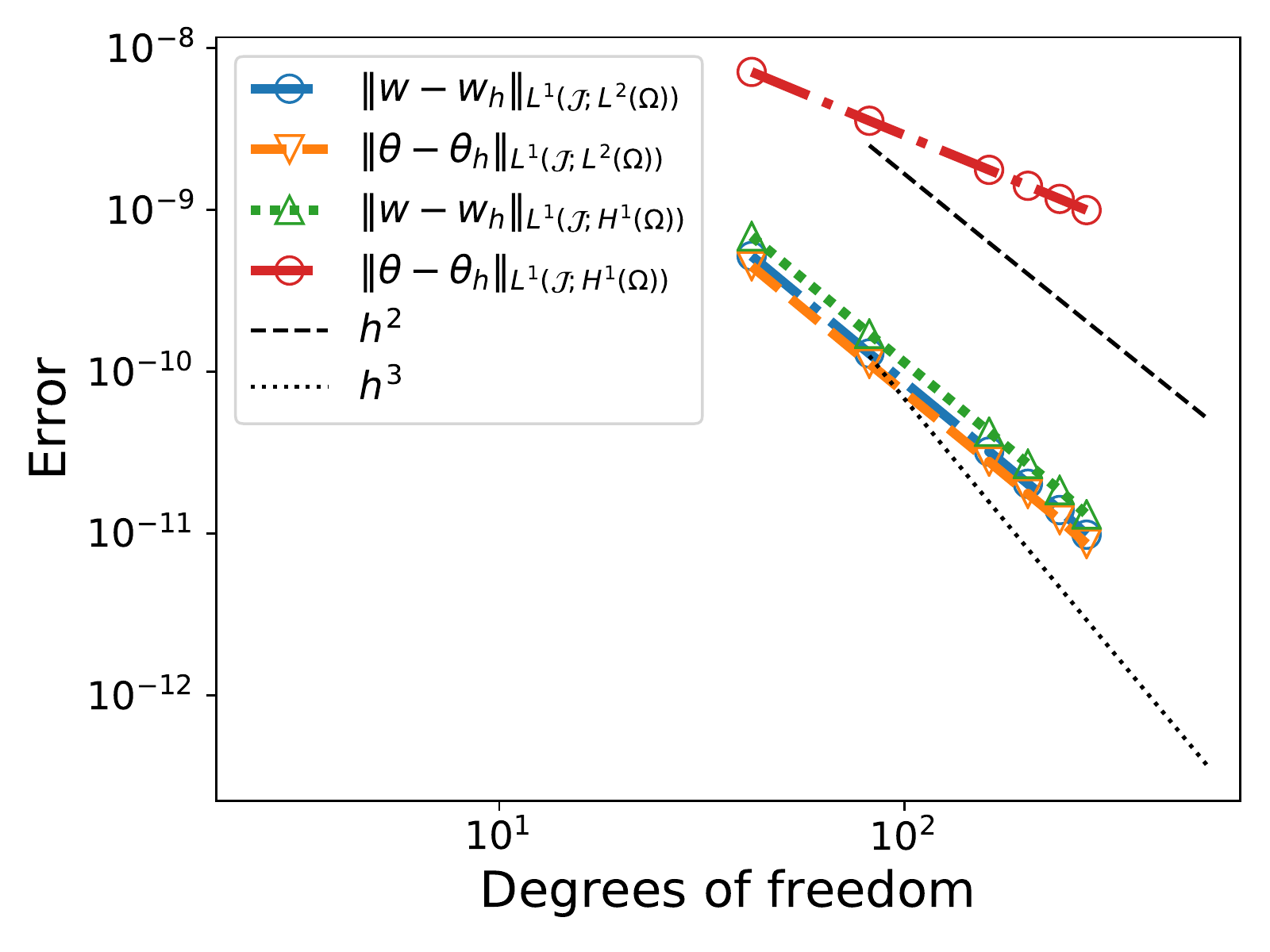}\\
				{\hspace{1cm}\footnotesize (c)}
			\end{center}
		\end{minipage}
		\begin{minipage}[c]{0.45\linewidth}
			\begin{center}
				\includegraphics[scale=0.35]{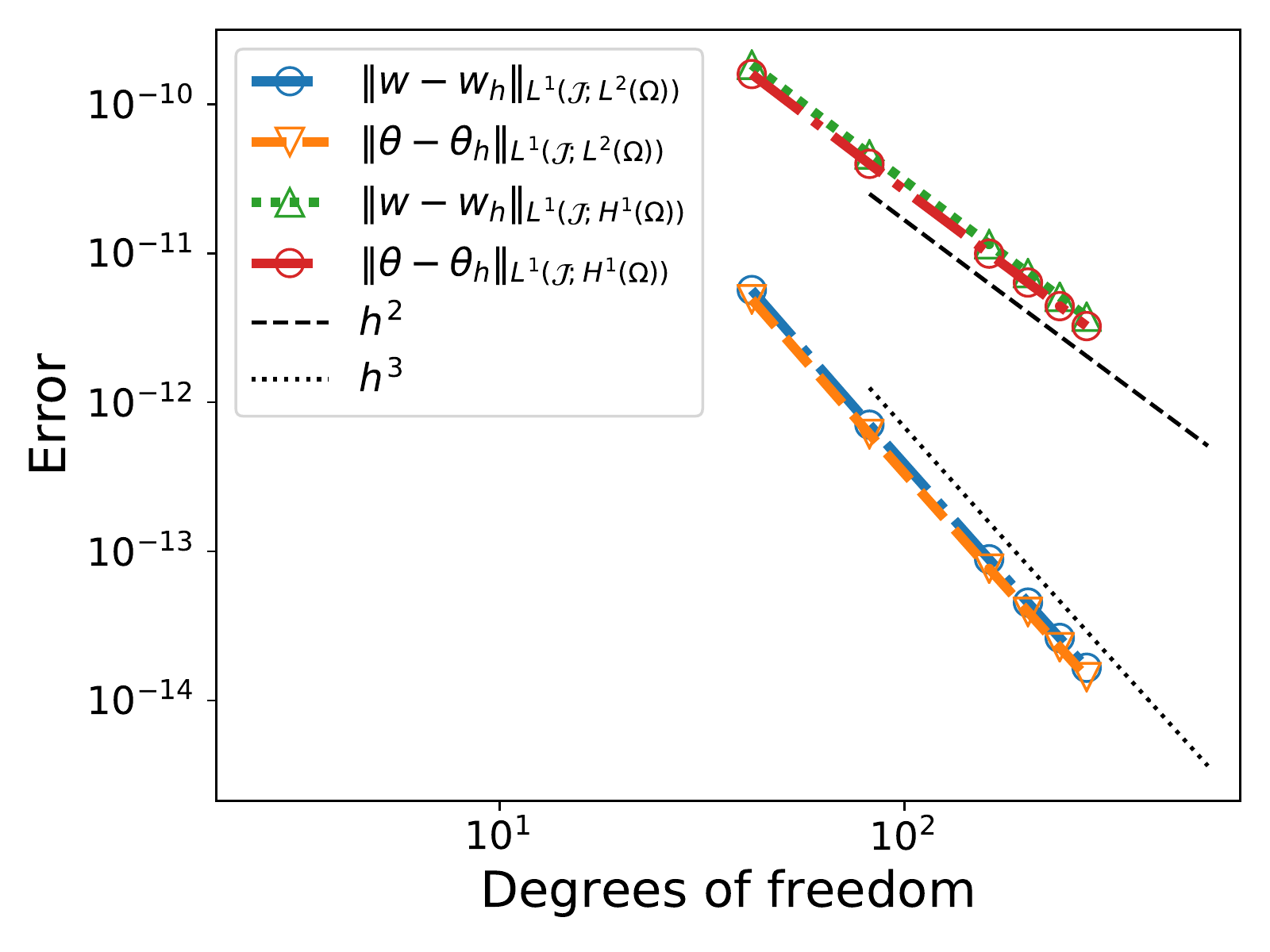}\\
				{\hspace{1cm}\footnotesize (d)}
			\end{center}
		\end{minipage}
		\caption{Effect of the thickness on the error  for a fully clamped homogeneous viscoelastic beam with thickness $d=0.001,m$ when using (a) exact integration and $\mathbb{P}_1$ elements, (b) reduced integration and $\mathbb{P}_1$ elements, (c) exact integration and $\mathbb{P}_2$ elements and (d) reduced integration and $\mathbb{P}_2$ elements.  The observation time is $T=10\,s$ with $5000$ and $32000$ time steps when using linear and quadratic elements, respectively. The values of $h$, $h^2$ and $h^3$ has been properly scaled to fit the graph.}
		\label{fig:qs_ex_0_exacta vs reducida-clamped}
	\end{figure}

	\subsubsection{Beam under simply supported boundary conditions. }
	\label{subsubsection5-1-2}
	Now we present numerical results for the simply supported Timoshenko beam. In Tables \ref{table1:error_l2_w} - \ref{table2:error_h1_theta} are reported the convergence results for different values of thickness, where we observe that the convergence rates are the expected according to our theory. We compare this behavior with the exact integration procedure in Figure \ref{fig:qs_ex_0_exacta vs reducida}, where we observe the errors  on $L_{\mathcal{J}}^1(L^2(\Omega))$ and $L_{\mathcal{J}}^1(H^1(\Omega))$  when the selected thickness is $d=0.001\,m$. Numerical locking using piecewise linear functions and exact integration is clearly visible. In Figure \ref{fig:qs_ex_0_exacta vs reducida} is clearly seen that the order of convergence is suboptimal when exact integration is used for quadratic elements.
	
	\begin{table}[!ht]
		\caption{$L_\mathcal{J}^1(L^2(\Omega))$ error values and rate of convergence of the transverse displacement $w$ in a simply supported viscoelastic beam with $\eta=2.74\,$N$\cdot$s$/$m$^2$. The observation time is $T=10\,s$ with $5000$ and $32000$ time steps when using linear and quadratic elements, respectively.}
		\label{table1:error_l2_w}
		\tabcolsep=2pt\begin{tabular}{AACCCCCCCC}
			\hline
			DOF&$h$&\multicolumn{2}{c}{$d=10^{-1}\mbox{m}$}&&\multicolumn{2}{c}{$d=10^{-2}\mbox{m}$}&&\multicolumn{2}{c}{$d=10^{-3}\mbox{m}$}\\\cline{3-4}\cline{6-7}\cline{9-10}
			&&\texttt{e}_0(w)&\texttt{r}_0(w)&&\texttt{e}_0(w)&\texttt{r}_0(w)&&\texttt{e}_0(w)&\texttt{r}_0(w)\\
			\multicolumn{2}{c}{$\mathbb{P}_1,\; \Delta t=2.00e-3$}\\
			\hline
			42 &0.2   &1.5723e-9  &--   & & 1.5714e-9 &--   &   &1.5714e-9  &--  \\
			82 &0.1   &3.9357e-10  &1.99 & &3.9334e-10  &1.99 &   &3.9334e-10  &1.99   \\
			162&0.05  &9.8399e-11 &1.99 &  &9.8342e-11  &1.99 &   &9.8341e-11 & 1.99  \\
			202&0.04  &6.2969e-11 &2.00 &  &6.2933e-11  &2.00 &   &6.2933e-11 & 2.00  \\
			242&0.03  &4.3724e-11 &2.00 &  &4.3698e-11  &2.00 &   &4.3698e-11 & 2.00  \\
			282&0.028 &3.2119e-11 &2.00 &  &3.2100e-11  &2.00 &   &3.2100e-11 & 2.00  \\
			\multicolumn{2}{c}{$\mathbb{P}_2,\; \Delta t=3.13e-4$}\\
			\hline
			82 &0.2   &5.6611e-12  &--   &  &5.6611e-12 &--   &   &5.6611e-12  &--  \\
			162 &0.1  &7.0799e-13  &2.99 &  &7.0799e-13 &2.99 &   &7.0799e-13  &2.99   \\
			322&0.05  &8.8901e-14  &2.99 &  &8.8900e-14 &2.99 &   &8.8900e-14 & 2.99  \\
			402&0.04  &4.5749e-14  &2.97 &  &4.5748e-14 &2.97 &   &4.5748e-14 & 2.97  \\
			482&0.03  &2.6681e-14  &2.95 &  &2.6680e-14 &2.95 &   &2.6680e-14 & 2.95  \\
			562&0.028 &1.6990e-14  &2.92 &  &1.6989e-14 &2.92 &   &1.6989e-14 & 2.92  \\
			\hline
		\end{tabular}
		
	\end{table}
	
	\begin{table}[!ht]
		\caption{$L_\mathcal{J}^1(H^1(\Omega))$ error data and rate of convergence of the transverse displacement $w$ in a simply supported viscoelastic beam with $\eta=2.74$\,N$\cdot$s$/$m$^2$. The observation time is $T=10\,s$ with $5000$ and $32000$ time steps when using linear and quadratic elements, respectively.}
		\label{table2:error_h1_w}
		\tabcolsep=2pt\begin{tabular}{AACCCCCCCC}
			\hline
			DOF&$h$&\multicolumn{2}{c}{$d=10^{-1}\mbox{m}$}&&\multicolumn{2}{c}{$d=10^{-2}\mbox{m}$}&&\multicolumn{2}{c}{$d=10^{-3}\mbox{m}$}\\\cline{3-4}\cline{6-7}\cline{9-10}
			&&\texttt{e}_1(w)&\texttt{r}_1(w)&&\texttt{e}_1(w)&\texttt{r}_1(w)&&\texttt{e}_1(w)&\texttt{r}_1(w)\\
			\multicolumn{2}{c}{$\mathbb{P}_1,\; \Delta t=2.00e-3$}\\
			\hline
			42 &0.2    &9.1736e-9  &--   &  &9.1587e-9  &--   &   &9.1586e-9 &--  \\
			82 &0.1    &4.5235e-9  &1.02 &  &4.5160e-9  &1.02 &   &4.5159e-9 &1.02   \\
			162&0.05   &2.2537e-9 &1.00 &   &2.2500e-9 &1.00 &    &2.2499e-9 & 1.00  \\
			202&0.04   &1.8022e-9 &1.00 &   &1.7992e-9 &1.00 &    &1.7992e-9 & 1.00  \\
			242&0.03   &1.5015e-9 &1.00 &   &1.4990e-9 &1.00 &    &1.4989e-9 & 1.00 \\
			282&0.028  &1.2868e-9 &1.00 &   &1.2846e-9 &1.00 &    &1.2846e-9 & 1.00  \\
			\multicolumn{2}{c}{$\mathbb{P}_2,\; \Delta t=3.13e-4$}\\
			\hline
			82 &0.2   &1.8335e-10  &--   &  &1.8335e-10 &--   &   &1.8335e-10  &--  \\
			162 &0.1  &4.5854e-11  &1.99 &  &4.5854e-11 &1.99 &   &4.5854e-11  &1.99   \\
			322&0.05  &1.1464e-11  &1.99 &  &1.1464e-11 &1.99 &   &1.1464e-11 & 1.99  \\
			402&0.04  &7.3375e-12  &1.99 &  &7.3375e-12 &1.99 &   &7.3375e-12 & 1.99 \\
			482&0.03  &5.0957e-12  &1.99 &  &5.0956e-12 &1.99 &   &5.0956e-12 & 1.99  \\
			562&0.028 &3.7439e-12  &1.99 &  &3.7439e-12 &1.99 &   &3.7439e-12 & 1.99  \\
			\hline
		\end{tabular}
		
	\end{table}
	
	\begin{table}[!ht]
		\caption{$L_\mathcal{J}^1(L^2(\Omega))$ error values and rate of convergence of the rotation $\theta$ in a simply supported viscoelastic beam with $\eta=2.74$\,N$\cdot$s$/$m$^2$. The observation time is $T=10\,s$ with $5000$ and $32000$ time steps when using linear and quadratic elements, respectively.}
		\label{table1:error_l2_theta}
		\tabcolsep=2pt\begin{tabular}{AACCCCCCCC}
			\hline
			DOF&$h$&\multicolumn{2}{c}{$d=10^{-1}\mbox{m}$}&&\multicolumn{2}{c}{$d=10^{-2}\mbox{m}$}&&\multicolumn{2}{c}{$d=10^{-3}\mbox{m}$}\\\cline{3-4}\cline{6-7}\cline{9-10}
			&&\texttt{e}_0(\theta)&\texttt{r}_0(\theta)&&\texttt{e}_0(\theta)&\texttt{r}_0(\theta)&&\texttt{e}_0(\theta)&\texttt{r}_0(\theta)\\
			\multicolumn{2}{c}{$\mathbb{P}_1,\; \Delta t=2.00e-3$}\\
			\hline
			42 &0.2   &8.4015e-10  &--  &  &8.4015e-10 &--   &   &8.4015e-10 &--  \\
			82 &0.1   &2.1014e-10 &1.99 &  &2.1014e-10 &1.99 &   &2.1014e-10 &1.99   \\
			162&0.05  &5.2527e-11 &2.00 &  &5.2527e-11 &2.00 &   &5.2527e-11 & 2.00  \\
			202&0.04  &3.3613e-11 &2.00 &  &3.3613e-11 &2.00 &   &3.3613e-11 & 2.00  \\
			242&0.03  &2.3338e-11 &2.00 &  &2.3338e-11 &2.00 &   &2.3338e-11 & 2.00  \\
			282&0.028 &1.7142e-11 &2.00 &  &1.7142e-11 &2.00 &   &1.7142e-11 & 2.00  \\
			\multicolumn{2}{c}{$\mathbb{P}_2,\; \Delta t=3.13e-4$}\\
			\hline
			82 &0.2   &4.9026e-12  &--  &  &4.9026e-12 &--   &   &4.9026e-12 &--  \\
			162 &0.1  &6.1311e-13 &2.99 &  &6.1311e-13 &2.99 &   &6.1311e-13 &2.99   \\
			322&0.05  &7.6960e-14 &2.99 &  &7.6960e-14 &2.99 &   &7.6960e-14 & 2.99  \\
			402&0.04  &3.9587e-14 &2.97 &  &3.9587e-14 &2.97 &   &3.9587e-14 & 2.97  \\
			482&0.03  &2.3071e-14 &2.96 &  &2.3071e-14 &2.96 &   &2.3071e-14 & 2.96  \\
			562&0.028 &1.4676e-14 &2.93 &  &1.4676e-14 &2.93 &   &1.4676e-14 & 2.93  \\
			\hline
		\end{tabular}
		
	\end{table}
	
	\begin{table}[!ht]
		\caption{$L_\mathcal{J}^1(H^1(\Omega))$ error data and rate of convergence of the rotation $\theta$ in a simply supported viscoelastic beam. The observation time is $T=10\,s$ with $5000$ and $32000$ time steps when using linear and quadratic elements, respectively.}
		\label{table2:error_h1_theta}
		\tabcolsep=2pt\begin{tabular}{AACCCCCCCC}
			\hline
			DOF&$h$&\multicolumn{2}{c}{$d=10^{-1}\mbox{m}$}&&\multicolumn{2}{c}{$d=10^{-2}\mbox{m}$}&&\multicolumn{2}{c}{$d=10^{-3}\mbox{m}$}\\\cline{3-4}\cline{6-7}\cline{9-10}
			&&\texttt{e}_1(\theta)&\texttt{r}_1(\theta)&&\texttt{e}_1(\theta)&\texttt{r}_1(\theta)&&\texttt{e}_1(\theta)&\texttt{r}_1(\theta)\\
			\multicolumn{2}{c}{$\mathbb{P}_1,\; \Delta t=2.00e-3$}\\
			\hline
			42 &0.2   &7.1578e-9 &--    &   &7.1578e-9 &--   &   &7.1578e-9  & --  \\
			82 &0.1   &3.5597e-9 &1.00  &   &3.5597e-9 &1.00 &   &3.5597e-9  & 1.00   \\
			162&0.05  &1.7774e-9 &1.00  &   &1.7774e-9 &1.00 &   &1.7774e-9  & 1.00  \\
			202&0.04  &1.4217e-9 &1.00  &   &1.4217e-9 &1.00 &   &1.4217e-9  & 1.00   \\
			242&0.03  &1.1846e-9 &1.00  &   &1.1846e-9 &1.00 &   &1.1846e-9  & 1.00   \\
			282&0.028 &1.0153e-9 &1.00  &   &1.0153e-9 &1.00 &   &1.0153e-9  & 1.00   \\
			\multicolumn{2}{c}{$\mathbb{P}_2,\; \Delta t=3.13e-4$}\\
			\hline
			82 &0.2   &1.5893\textit{e}-10 &--   &  &1.5893e-10 &--   &   &1.5893e-10  &--  \\
			162 &0.1  &3.9719e-11 &2.00 &  &3.9719e-11 &2.00 &   &3.9719e-11 &2.00   \\
			322&0.05  &9.9292e-12 &2.00 &  &9.9292e-12 &2.00 &   &9.9292e-12 & 2.00  \\
			402&0.04  &6.3547e-12 &1.99 &  &6.3547e-12 &1.99 &   &6.3547e-12 & 1.99  \\
			482&0.03  &4.4130e-12 &1.99 &  &4.4130e-12 &1.99 &   &4.4130e-12 & 1.99  \\
			562&0.028 &3.2423e-12 &1.99 &  &3.2423e-12 &1.99 &   &3.2423e-12 & 1.99 \\
			\hline
		\end{tabular}
		
	\end{table}
	
	\begin{figure}[!ht]
		\begin{minipage}[c]{0.45\linewidth}
			\begin{center}
				\includegraphics[scale=0.35]{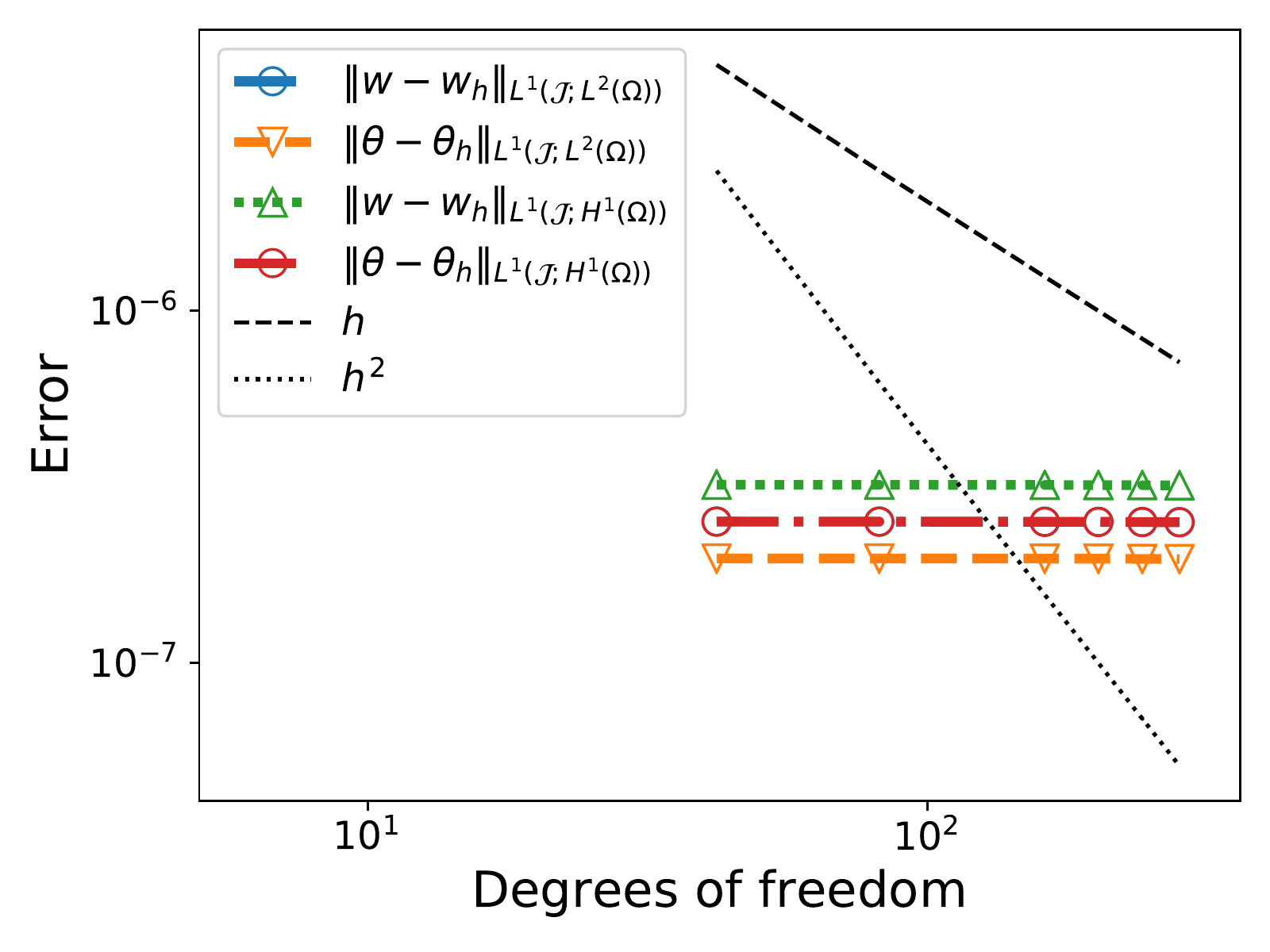}\\
				{\hspace{1cm}\footnotesize (a)}
			\end{center}
		\end{minipage}
		\begin{minipage}[c]{0.45\linewidth}
			\begin{center}
				\includegraphics[scale=0.35]{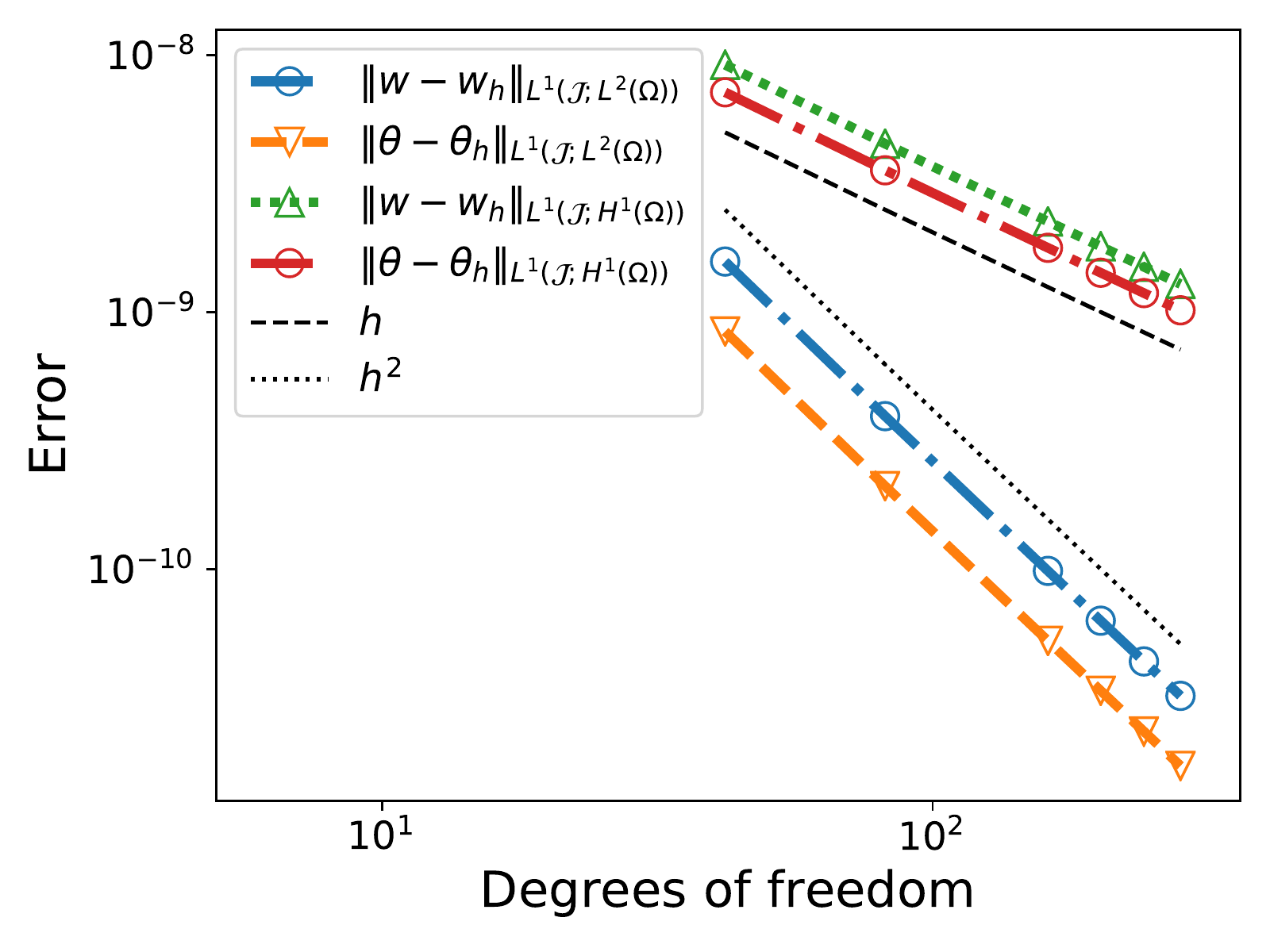}\\
				{\hspace{1cm}\footnotesize (b)}
			\end{center}
		\end{minipage}
		\begin{minipage}[c]{0.45\linewidth}
			\begin{center}
				\includegraphics[scale=0.35]{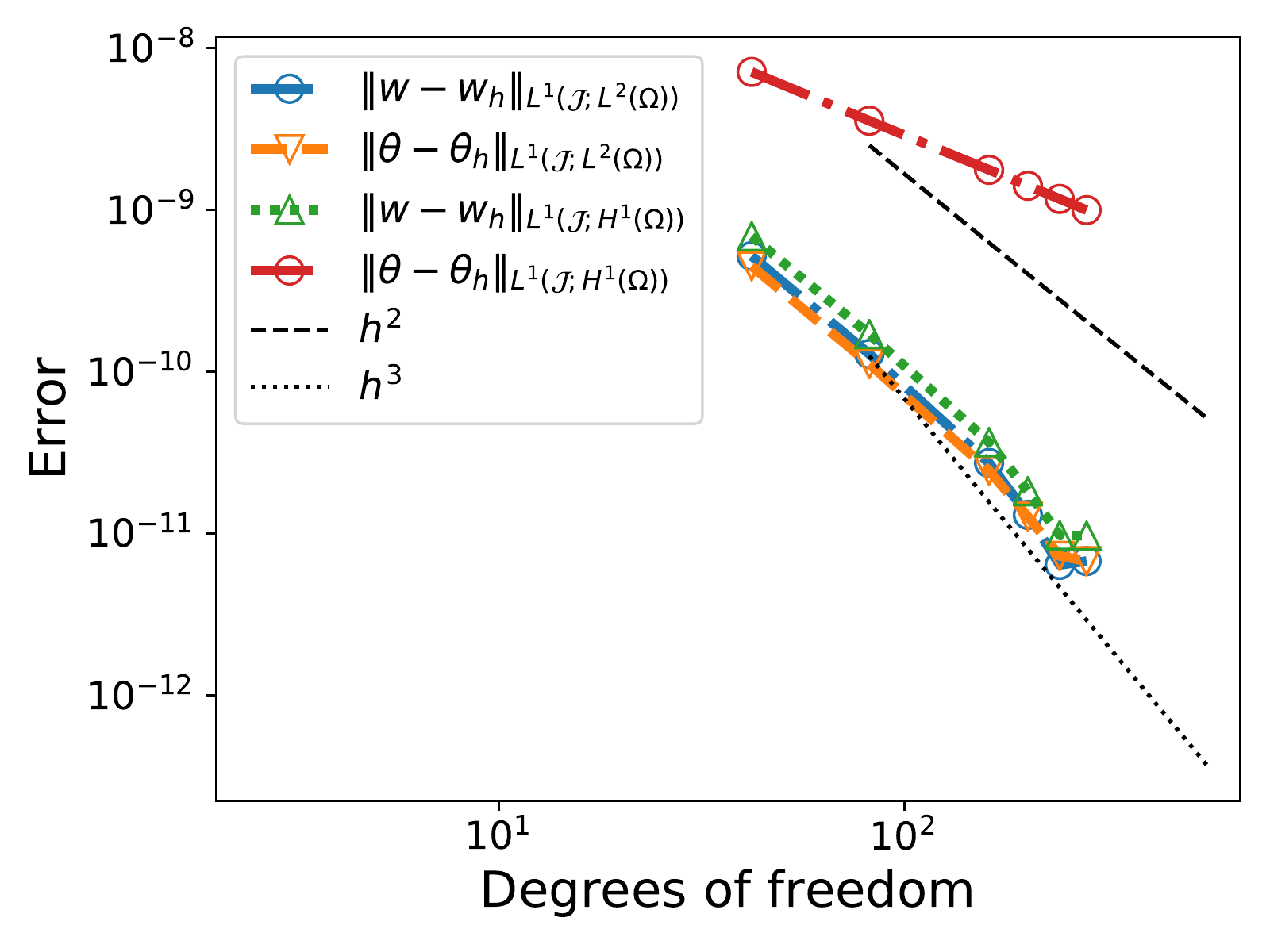}\\
				{\hspace{1cm}\footnotesize (c)}
			\end{center}
		\end{minipage}
		\begin{minipage}[c]{0.45\linewidth}
			\begin{center}
				\includegraphics[scale=0.35]{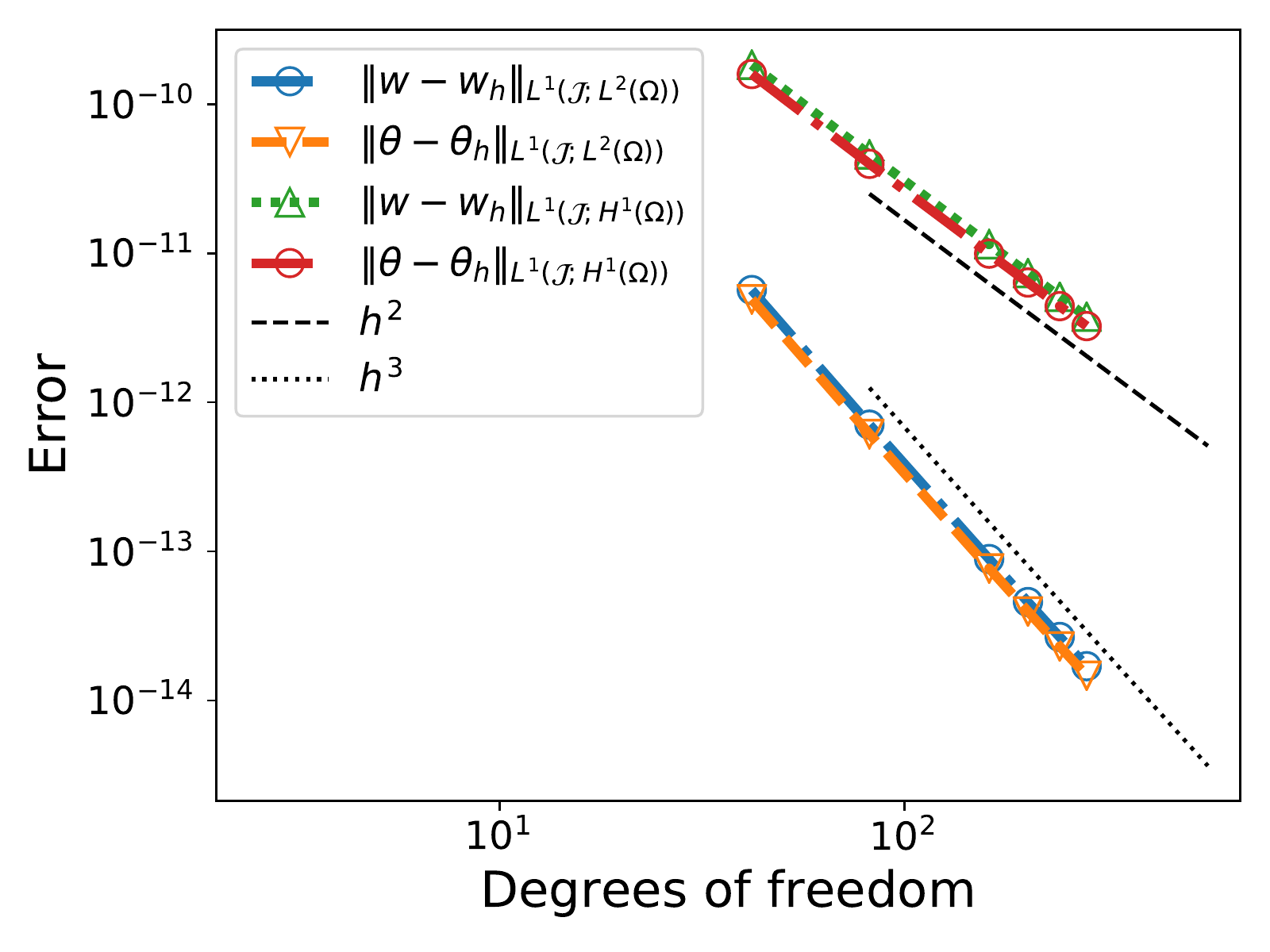}\\
				{\hspace{1cm}\footnotesize (d)}
			\end{center}
		\end{minipage}
		
		\caption{Effect of the thickness on the error  for the simply supported homogeneous viscoelastic beam with thickness $d=0.001m$ when using (a) exact integration with $\mathbb{P}_1$ elements,(b) reduced integration and $\mathbb{P}_1$ elements, (c) exact integration and $\mathbb{P}_2$ elements and (d) reduced integration and $\mathbb{P}_2$ elements.  The observation time is $T=10\,s$ with $5000$ and $32000$ time steps when using linear and quadratic elements, respectively. The values of $h, h^2$ and $h^3$ has been properly scaled to fit the graph. }
		\label{fig:qs_ex_0_exacta vs reducida}
	\end{figure}

	\section{Conclusions}
	\label{section:conclusions}
	
	We have presented an abstract functional framework to deal with mixed formulations for viscoelastic problems. We have shown the solvability of mixed viscoelastic formulations, by means of the well known mixed theory. The relevance is focused in the independence of the perturbation parameter in every estimate, since in the applications, numerical methods can be affected due this parameter. For this reason, we have made a rigorous analysis of the constants in each estimate.  With the well established theory of Volterra equations, we have proved convergence of mixed conforming numerical methods for the mixed viscoelastic problem, where the convergence is independent of the perturbation parameter. In order to apply the developed theory, we have considered a linear viscoelastic Timoshenko beam, which is well known for being a parameter dependent problem respect to the thickness, and consider its viscoelastic mixed formulation, which fits perfectly in our framework. The reported numerical tests show that the mixed numerical method is locking-free, as it happens in the elastic case. 

\bibliographystyle{amsplain}
\bibliography{bibliofond}  

\end{document}